\documentclass[11pt,a4paper]{article}
\setlength{\parindent}{0pt}
\pdfpagewidth 8.5in
\pdfpageheight 11in
\usepackage{amsmath}
\usepackage{amsthm}
\usepackage{amssymb}
\usepackage{amsfonts}
\usepackage{mathrsfs}
\usepackage{parskip}
\usepackage{dsfont}
\usepackage{geometry}
\usepackage{bm}
\usepackage{fancyhdr}
\usepackage{hyperref}
%\pagestyle{fancy}
%\lhead{\thepage}\chead{}\rhead{}
%\lfoot{} \cfoot{} \rfoot{}
%\renewcommand{\footrulewidth}{0.2pt}

\makeatletter
\def\thm@space@setup{%
  \thm@preskip=\parskip \thm@postskip=0pt
}
\makeatother
\theoremstyle{theorem}
\newtheorem{theorem}{THEOREM}%[section] links numbering to section
\newtheorem*{theorem*}{THEOREM}
\newtheorem{lemma}[theorem]{Lemma}  %[theorem] between brackets links numbering to theorems and for props etc.
\newtheorem*{lemma*}{Lemma}
\newtheorem*{prop*}{Proposition}
\newtheorem{prop}[theorem]{Proposition}

\newtheorem*{corollary*}{Corollary}

\theoremstyle{definition}
\newtheorem{definition}{Definition}

\newtheorem*{note}{Note}

\newtheorem*{remark}{Remark}

% Usual mathematical macros
\newcommand{\E}[1]{\ensuremath{\mathbb{E} \left[#1 \right]}}
\newcommand{\Prob}[1]{\ensuremath{\mathbb{P} \left(#1 \right)}}
\newcommand{\var}[1]{\ensuremath{\mathrm{var} \left(#1 \right)}}

\newcommand{\supp}[1]{\ensuremath{\mathrm{supp} \left(#1 \right)}}
\newcommand{\Bin}[2]{\ensuremath{\mathrm{Bin}\left(#1,#2 \right)}}

\newcommand{\R}{\ensuremath{\mathbb{R}}}
\newcommand{\Z}{\ensuremath{\mathbb{Z}}}
\newcommand{\N}{\ensuremath{\mathbb{N}}}

\newcommand{\F}{\ensuremath{\mathcal{F}}}

\newcommand{\fl}[1]{\ensuremath{\lfloor #1 \rfloor}}

\renewcommand{\subset}{\subseteq}
\newcommand{\equidist}{\ensuremath{\stackrel{d}{=}}}

\begin{document}
\title{\textbf{Frozen percolation on inhomogeneous random graphs}}
\author{Dominic Yeo\thanks{
Faculty of Industrial Engineering and Management, Technion. 
\texttt{yeo@technion.ac.il}}}
\date{}
\maketitle

\begin{abstract}
Mean-field frozen percolation is a random graph-valued process, which adjusts the dynamics of the classical Erd\H{o}s--R\'enyi process with an additional mechanism to `freeze' potential giant components before they can form. It is known to exhibit self-organised criticality from a wide class of initial graphs. We show that a family of inhomogeneous random graphs with finitely-many types form a stable class under these dynamics. We study how the survival of a vertex depends on its initial type, and establish a hydrodynamic limit for the process recording surviving vertices of each type, in terms of multitype branching processes which approximate the graphs. The parameters of these branching processes are eventually critical, and their evolution in time is described by solutions to an unusual class of differential equations driven by Perron--Frobenius eigenvectors.
\end{abstract}

\renewcommand{\thefootnote}{}
\footnotetext{Key words: Random graph, frozen percolation, inhomogeneous random graph, forest fire, self-organised criticality.}
\footnotetext{AMS 2010 Subject Classification: 05C80, 60C05.}

\section{Introduction}

\emph{Frozen percolation} is a graph-valued Markov process. Given some base graph $G$, the process starts with all edges of $G$ initially declared closed; subsequently each edge of $G$ becomes open at constant rate. Open connected components which attain a certain threshold size get \emph{frozen}, meaning that they are removed from the graph. This process was introduced by Aldous \cite{Aldous99FP} in the setting where $G$ is the infinite binary tree, and has been studied by several authors in various lattice settings \cite{Kiss15,vandenberg12square,vandenberg12binary}.

A variant due to R\'ath \cite{RathFP} in the \emph{mean-field} setting of the complete graph on $N$ vertices instead adds edges at constant rate, and freezes components at a rate proportional to their size, and this is the version we study here. More precisely, we consider some (possibly random) initial graph $\mathcal{G}^N(0)$ on vertex set $[N]:=\{1,\ldots,N\}$ and declare its vertices to be \emph{alive}. The process $(\mathcal{G}^N(t),\, t\ge 0)$ then evolves as follows:
\begin{itemize}
\item edges between pairs of alive vertices arrive independently at rate $1/N$ (though we do not allow the same edge to arrive twice);
\item independently, any connected component $\mathcal{C}$ is removed from the graph at rate $\lambda(N)|\mathcal{C}|$, where $|\mathcal{C}|$ counts the number of vertices in $\mathcal{C}$. Once removed, a vertex is no longer alive, and never becomes alive again.
\end{itemize}

Throughout, we will consider a sequence of such processes, where the so-called \emph{lightning rate} $\lambda(N)$ satisfies the \emph{critical scaling}
\begin{equation}\label{eq:lambdascaling}
1/N\ll \lambda(N) \ll 1,
\end{equation}
as $N\rightarrow\infty$, so that, heuristically, a small component with size $\Theta(1)$ is very unlikely to be frozen, while \emph{giant components} with size $\Theta(N)$ would be `immediately' frozen.

R\'ath \cite{RathFP} shows that subject to \eqref{eq:lambdascaling}, the model exhibits \emph{self-organised criticality}, whereby from a broad class of initial configurations, the dynamics of the process drive it into a critical state and then maintain it there. This concept was introduced by Bak, Tang, and Wiesenfeld \cite{BakTangW88} in the setting of the sandpile model. In a graph-valued context, criticality is often characterised by a power-law decay of component sizes, which is in many random graph models the point of phase transition for the emergence of a giant component. Part of the motivation for the setting we study here is the opportunity to consider a different characterisation of criticality.

The focus of this paper is a setting where vertices carry an extra piece of information, a \emph{type}, which takes a value in $[k]:=\{1,\ldots,k\}$ for fixed $k\ge 1$. The structure of the initial graph $\mathcal{G}^N(0)$ depends on the types as follows. We fix a \emph{kernel} $\kappa$, a $k\times k$ non-negative symmetric matrix. Then, if vertices $v,w\in[N]$ have types $i$ and $j$, they are connected by an edge in $\mathcal{G}^N(0)$
\begin{equation}\label{eq:textGNkedges}
\text{with probability }1-\exp\left(-\frac{\kappa_{i,j}}{N}\right),\text{ independently for different pairs }\{v,w\}.
\end{equation}
We write $(\mathcal{G}^N(t),\,t\ge 0)$ for the frozen percolation process started from $\mathcal{G}^N(0)$, augmented with the types of the vertices in $\mathcal{G}^N(0)$. (Note that the type of a vertex does not change with time.)

Our main results describe how a vertex's survival depends on its type. We study
\begin{equation}\label{eq:defnpiN}\pi^N_i(t):= \frac{1}{N}\#\left\{ \text{alive vertices of type }i\text{ at time }t  \right\},\quad i\in[k],\,t\ge 0,
\end{equation}
corresponding to $\mathcal{G}^N(t)$. The following theorems give a complete description of limits of $\pi^N(\cdot)$ for a sequence of processes $(\mathcal{G}^N(\cdot))$ where the types in the initial graphs $\mathcal{G}^N(0)$ converge appropriately.

Throughout, we write $\rho(A)$ for the principal eigenvalue of a positive matrix $A\in\R_{>0}^{k\times k}$, and $\mu(A)$ for its principal left-eigenvector, normalised so that each component is positive and $||\mu(A)||_1=1$.

\begin{definition}\label{defntypeflow}
Given a matrix $A\in\R^{k\times k}$ and a vector $v\in\R^k$, we write $A\circ v$ for the matrix $(A_{i,j}v_j)_{i,j\in[k]}$.

We then say $\pi:[0,\infty)\rightarrow \R_{\ge 0}^k\backslash\{\mathbf{0}\}$  is a \emph{frozen percolation type flow} with initial kernel $\kappa$ and positive initial measure $\pi(0)$ if $\pi$ is continuous, and there exists some \emph{critical time} $t_c\ge 0$, and a continuous function $\phi:(t_c,\infty)\rightarrow\R_+$ such that:
\begin{equation}\label{eq:pregelation} \pi(t)=\pi(0),\quad t\le t_c,\end{equation}
\begin{equation}\label{eq:critcondition} \rho(\kappa(t)\circ \pi(t))=1,\quad t\ge t_c,\end{equation}
\begin{equation}\label{eq:flowDE} \frac{\mathrm{d}}{\mathrm{d}t}\pi(t) = -\mu(\kappa(t)\circ\pi(t))\phi(t),\quad t>t_c.\end{equation}
with $\kappa(t):=\kappa + t\mathbf{1}$, where $\mathbf{1}$ is the $k\times k$ matrix with all entries equal to $1$.
\end{definition}

\begin{theorem}\label{DEtheorem}We consider a kernel $\kappa$ and $\pi(0)\in(0,\infty)^k$ satisfying $||\pi(0)||_1=1$. We assume that at least one of the following holds:
\begin{itemize}
\item $\kappa$ is a \emph{strictly positive} kernel (ie with strictly positive entries), and $\rho(\kappa\circ \pi(0))\le 1$;
\item $\rho(\kappa\circ \pi(0)<1$.
\end{itemize}
Then there exists a unique frozen percolation type flow with initial kernel $\kappa$ started from distribution $\pi(0)$.
\end{theorem}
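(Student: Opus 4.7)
The proof naturally splits at the critical time. For $t \le t_c$, \eqref{eq:pregelation} gives $\pi \equiv \pi(0)$, while continuity and \eqref{eq:critcondition} force $\rho(\kappa(t_c)\circ\pi(0)) = 1$. I would first establish that this equation has a unique solution $t_c \ge 0$: the map $t \mapsto \rho(\kappa(t)\circ\pi(0))$ is continuous, strictly increasing for $t > 0$ (strict monotonicity of the PF eigenvalue for strictly positive matrices, and $\kappa(t)\circ\pi(0)$ is strictly positive whenever $t > 0$), takes value $\rho(\kappa\circ\pi(0)) \le 1$ at $t = 0$, and diverges as $t \to \infty$ (from $\kappa(t)\circ\pi(0) \ge t\mathbf{1}\circ\pi(0)$ one obtains $\rho \ge t\|\pi(0)\|_1 = t$). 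In the second hypothesis $\rho(\kappa\circ\pi(0)) < 1$ also forces $t_c > 0$, so the starting matrix for the post-critical dynamics is genuinely positive. This pins down $t_c$ and the pre-critical trajectory.

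On $(t_c,\infty)$ the direction of $\dot\pi$ is prescribed by \eqref{eq:flowDE} to be $-\mu$, so only the scalar speed $\phi$ is free, and it is determined by the constraint \eqref{eq:critcondition}. Differentiating $\rho(\kappa(t)\circ\pi(t)) \equiv 1$ via the perturbation identity $\dot\rho = \mu^T \dot A\, \xi/(\mu^T\xi)$, where $A(t) := \kappa(t)\circ\pi(t)$ and $\xi$ is the strictly positive right PF eigenvector, using $\dot A_{i,j} = \pi_j - \kappa(t)_{i,j}\mu_j\phi$ and the identity $(\mu^T\kappa(t))_j = \mu_j/\pi_j$ (from $\mu^T A = \mu^T$ with $\rho = 1$), I would solve explicitly:
\[
\phi(t,\pi) \;=\; \frac{\langle \pi,\xi\rangle}{\sum_{j} \mu_j^2 \xi_j/\pi_j},
\]
a strictly positive continuous function on the region $\{\pi\in(0,\infty)^k : \rho(\kappa(t)\circ\pi)=1\}$. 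Substituting yields a closed ODE $\dot\pi = F(t,\pi)$, to which I would apply Picard--Lindel\"of: since PF eigendata for strictly positive matrices are analytic in the matrix entries, $F$ is locally Lipschitz on this region, giving local existence and uniqueness starting from $\pi(t_c^+) = \pi(0)$.

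For global existence I would combine two bounds: $\tfrac{d}{dt}\|\pi\|_1 = -\phi\|\mu\|_1 = -\phi < 0$, which bounds the trajectory, and $\dot\pi_j = -\pi_j(\mu^T\kappa(t))_j\phi$ (from the derivation above), which shows each $\pi_j$ decays at most exponentially on bounded time intervals and so cannot leave $(0,\infty)^k$ in finite time. The criticality constraint is preserved automatically since $\phi$ was chosen to enforce $\dot\rho = 0$ with $\rho = 1$ at $t_c$. Concatenating the two phases gives existence; any other type flow must share the same $t_c$ (by the first paragraph) and must satisfy the same derived ODE on $(t_c,\infty)$, hence coincides with the constructed one. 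The main obstacle is maintaining uniform control on the PF eigenvectors $\mu,\xi$ of $\kappa(t)\circ\pi(t)$ along the trajectory, in particular keeping the denominator of $\phi$ bounded away from zero on bounded time intervals; strict positivity of $\kappa(t)$ throughout $t \ge t_c$ together with the exponential lower bound on each $\pi_j$ should furnish this.
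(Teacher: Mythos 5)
Your proof is correct in its essentials, but takes a genuinely different and more elementary route than the paper's. The paper proves uniqueness by associating to a type flow $\pi$ the sequence $v_\ell(t) := \Prob{|\Xi^{\pi(t),\kappa(t)}|=\ell}$, showing it solves Smoluchowski's equations \eqref{eq:smol} (Proposition \ref{flowissmolprop}), and invoking the uniqueness of Smoluchowski solutions to pin down $\Phi$ and hence $\phi$; $\pi=\nu$ then follows from Gronwall and the Lipschitz property of $\mu$ (Lemma \ref{muLipprop1}). Existence is obtained quite separately as a byproduct of the hydrodynamic limit for the discrete $k$-type processes in Section \ref{weaklimitsection}. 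You instead eliminate $\phi$ directly by differentiating the criticality constraint via the simple-eigenvalue perturbation identity and the relation $[\mu^T\kappa(t)]_j=\mu_j/\pi_j$, producing a closed, locally Lipschitz ODE $\dot\pi=F(t,\pi)$ on $(0,\infty)^k$ to which Picard--Lindel\"of applies; your computation is correct, and the explicit formula for $\phi$ is something the paper never writes. Your route is self-contained (no branching processes, no discrete model) and treats existence and uniqueness simultaneously; what the paper's route buys is the Smoluchowski connection itself, which is needed elsewhere (Theorems \ref{flowtimelimitthm} and \ref{weaklimitthm}), together with the identification $t_c=T_{\mathrm{gel}}$. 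Two presentational points worth tightening: the lower bound keeping $\pi_j$ away from zero should be derived from the integral bound $\int_{t_c}^T\phi\,\mathrm{d}s=\Phi(t_c)-\Phi(T)\le 1$ rather than a pointwise decay-rate bound (this is precisely Lemma \ref{flowsstaypositive}); and to invoke Picard--Lindel\"of cleanly one should note that $F$ is defined and Lipschitz on the \emph{open} set $\{\pi\in(0,\infty)^k\}$ and that the surface $\{\rho(\kappa(t)\circ\pi)=1\}$ is invariant for the extended ODE (indeed your formula gives $\dot\rho\propto 1-\rho$), so that any type flow is a solution of the extended ODE and its uniqueness applies.
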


%\begin{note}
%Assuming that at least one of the two above conditions holds ensures that $\kappa(t_c)$ is positive, which in turn ensures that $\mu(\kappa(t_c)\circ \pi(t_c))$ is well-defined in \eqref{eq:flowDE} without requiring $\kappa$ to be irreducible.
%\end{note}

%It is an easy consequence of $\kappa(t)\rightarrow\infty$ and \eqref{eq:critcondition} (that we will explain in Section \ref{timelimitssection}) that $\Phi(t)\rightarrow 0$ as $t\rightarrow\infty$. We can show furthermore that any frozen percolation type flow has a limiting \emph{proportion} of types.

\begin{theorem}\label{flowtimelimitthm}
For any frozen percolation type flow with positive $\pi(0)$, $\lim_{t\rightarrow\infty} \frac{\pi(t)}{||\pi(t)||_1}$ exists, and is positive.
\end{theorem}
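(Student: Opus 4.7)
The plan is to derive an ODE for each $\log\hat\pi_j(t)$, with $\hat\pi(t) := \pi(t)/\|\pi(t)\|_1$, whose right-hand side is uniformly bounded in modulus by $\|\kappa\|_\infty\phi(t)$, where $\|\kappa\|_\infty := \max_{i,j}\kappa_{i,j}$. Combined with the integrability $\int_{t_c}^\infty\phi(s)\,ds \le \|\pi(0)\|_1 < \infty$, this will force $\log\hat\pi_j$ to be of bounded total variation on $[t_c,\infty)$, and hence converge to a finite (therefore positive) limit.

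The essential observation is that the rank-one structure of the perturbation $\kappa(t) - \kappa = t\mathbf{1}$ forces $\mu(\kappa(t)\circ\pi(t))$ to be almost proportional to $\pi(t)$. Writing out the left-eigenvector identity $\mu^T(\kappa(t)\circ\pi(t)) = \mu^T$ (valid since $\rho = 1$ by \eqref{eq:critcondition}) component-wise, and using $\|\mu\|_1 = 1$, gives
\begin{equation*}
\mu_j = \pi_j\bigl(t + (\mu^T\kappa)_j\bigr),\qquad j \in [k],
\end{equation*}
where I abbreviate $\mu := \mu(\kappa(t)\circ\pi(t))$ and $(\mu^T\kappa)_j := \sum_i \mu_i \kappa_{i,j}$. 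Summing over $j$ yields $t\|\pi\|_1 + \mu^T\kappa\pi = 1$, equivalently $1/\|\pi(t)\|_1 = t + \alpha(t)$ with $\alpha(t) := \mu^T\kappa\hat\pi(t) \in [0,\|\kappa\|_\infty]$; in particular $\|\pi(t)\|_1\to 0$.

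The flow equation \eqref{eq:flowDE} then yields $\frac{d}{dt}\log\pi_j(t) = -\mu_j\phi/\pi_j = -(t + (\mu^T\kappa)_j)\phi(t)$, while $\|\mu\|_1=1$ gives $\frac{d}{dt}\|\pi\|_1 = -\phi$ and so $\frac{d}{dt}\log\|\pi\|_1 = -(t + \alpha)\phi$. Subtracting,
\begin{equation*}
\frac{d}{dt}\log\hat\pi_j(t) = \bigl(\alpha(t) - (\mu^T\kappa)_j\bigr)\phi(t),
\end{equation*}
whose right-hand side is bounded in modulus by $\|\kappa\|_\infty\phi(t)$. Since $\int_{t_c}^\infty\phi\,ds = \|\pi(t_c)\|_1 - \lim_t\|\pi(t)\|_1 \le \|\pi(0)\|_1$, the total variation of $\log\hat\pi_j$ on $[t_c,\infty)$ is at most $\|\kappa\|_\infty\|\pi(0)\|_1 < \infty$. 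Hence $\log\hat\pi_j(t)$ converges to some $L_j \in \R$, so $\hat\pi_j(t) \to e^{L_j} > 0$; and the normalisation $\|\lim\hat\pi\|_1 = 1$ passes to the limit by continuity.

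The main obstacle is spotting the explicit identity $\mu_j = \pi_j(t + (\mu^T\kappa)_j)$, which is precisely what makes the rank-one shift $\kappa(t) = \kappa + t\mathbf{1}$ tractable; with that in hand the rest of the argument is bookkeeping. A technical point to verify is that $\pi(t) > 0$ strictly for all $t > t_c$ (so that $\kappa(t)\circ\pi(t)$ is strictly positive and the Perron--Frobenius left-eigenvector is well-defined with positive components), which follows because $\frac{d}{dt}\log\pi_j$ remains locally integrable on $(t_c,\infty)$.
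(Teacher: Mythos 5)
Your proof is correct and takes a genuinely different, notably cleaner route than the paper's. The paper proves convergence of $\hat\pi(t) := \pi(t)/\|\pi(t)\|_1$ by differentiating $\hat\pi$ directly, observing $\hat\pi = \mu\bigl(t\mathbf{1}\circ\pi(t)\bigr)$, and bounding the resulting difference of eigenvectors via a Lipschitz estimate for $A\mapsto\mu(A)$ on a suitable compact matrix set (Lemma \ref{muLipprop2}); this yields $\|\dot{\hat\pi}\|_1 \le C\kappa_{\max}\phi(t)$ with an implicit constant, and convergence follows by integrating. The paper then needs a \emph{separate} probabilistic argument for positivity of the limit, involving a coupling of frozen percolation processes and a highly-isolated-vertex calculation. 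You instead exploit the rank-one structure to solve the eigenvector equation explicitly, arriving at $\mu_j = \pi_j\bigl(t + (\mu^T\kappa)_j\bigr)$; passing to $\log\hat\pi_j$ then yields the closed-form ODE $\frac{d}{dt}\log\hat\pi_j = \bigl(\alpha - (\mu^T\kappa)_j\bigr)\phi$ with both $\alpha$ and $(\mu^T\kappa)_j$ trapped in $[0,\kappa_{\max}]$. Integrating against $\int\phi \le \|\pi(0)\|_1 < \infty$ gives finite total variation of $\log\hat\pi_j$, so existence and positivity of the limit are established in one stroke. The two ODEs are actually the same object — one can check $\hat\pi_j(\alpha - (\mu^T\kappa)_j) = \frac{1}{\Phi}(\hat\pi_j - \mu_j)$ — but your bound at the level of $\log\hat\pi_j$ is sharper in the crucial sense that it prevents $\hat\pi_j$ from sliding to zero, whereas the paper's $\ell^1$ Lipschitz bound does not, which is exactly why the paper has to bring in the probabilistic machinery. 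One stylistic quibble: your final sentence justifying $\pi(t) > 0$ is circular as written ($\log\pi_j$ must already be defined before its derivative can be discussed). The clean fix is a maximal-interval argument — on any maximal interval of positivity your bound keeps $\log\pi_j$ finite, so the interval is all of $(t_c,\infty)$ — or simply cite Lemma \ref{flowsstaypositive}, which the paper proves by the same Gronwall-type reasoning.
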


The main theorem, and the motivation for considering frozen percolation type flows, is the following.

\begin{theorem}\label{weaklimitthm}Fix $\kappa$ and $\pi(0)$ satisfying the conditions of Theorem \ref{DEtheorem}, and $\lambda:\N\rightarrow\R_+$ satisfying \eqref{eq:lambdascaling}. Let $(\mathcal{G}^N(\cdot))_{N\in\N}$ be a family of frozen percolation processes with lightning rates $\lambda(N)$, for which the vertices of the initial graphs $\mathcal{G}^N(0)$ are endowed with $k$ types, and edges given randomly by kernel $\kappa$. If the (possibly random) initial type distributions $\pi^N(0)$ satisfy $\pi^N(0)\stackrel{d}\rightarrow \pi(0)$, then the \emph{process} convergence
$$\pi^N(\cdot)\stackrel{d}\rightarrow \pi(\cdot)$$
holds in distribution as $N\rightarrow\infty$ with respect to the uniform topology on $\mathbb{D}^k([0,T])$ for each $T<\infty$, where $\pi$ is the unique frozen percolation type flow with initial kernel $\kappa$ started from distribution $\pi(0)$.
\end{theorem}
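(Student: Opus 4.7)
The plan is to prove convergence via the standard tightness-plus-identification scheme in $\mathbb{D}^k([0,T])$. First I would establish that the sequence $(\pi^N)_{N\in\N}$ is tight in the uniform topology, then show that every subsequential limit is almost surely a frozen percolation type flow in the sense of Definition \ref{defntypeflow}. Since Theorem \ref{DEtheorem} guarantees uniqueness of such a flow, the full weak convergence follows.

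The core structural input is that, ignoring freezing, the superposition of the initial $(1-e^{-\kappa_{ij}/N})$ edges with the independent rate-$1/N$ edge-birth Poisson processes makes the alive graph at time $t$ an IRG with the time-dependent kernel $\kappa(t)=\kappa+t\mathbf{1}$, restricted to the alive vertices. Using this, I would first handle the pre-critical phase $t\le t_c$. Here $\rho(\kappa(t)\circ\pi(0))<1$ (or the boundary case covered by strict positivity), so by the subcritical theory of IRGs of Bollob\'as--Janson--Riordan, the sum of squared component sizes is $O(N)$ whp. Consequently the total expected number of freezings on $[0,t_c]$ is at most $\int_0^{t_c}\lambda(N)\sum_\mathcal{C}|\mathcal{C}|^2\, \mathrm{d}s=O(\lambda(N)N)=o(N)$ by \eqref{eq:lambdascaling}, which gives $\sup_{t\le t_c}\|\pi^N(t)-\pi(0)\|_1\to 0$ in probability.

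For the post-critical phase $t>t_c$, the plan is to discretise time into small intervals of length $\delta$ and compare, on each interval, the evolution of $\pi^N$ with an infinitesimal freezing step in direction $\mu(\kappa(t)\circ\pi^N(t))$. The key input is a multitype-BP approximation: when the alive graph is slightly supercritical with offspring matrix $M(t):=\kappa(t)\circ\pi^N(t)$ of Perron eigenvalue $1+\varepsilon$, the giant component has size $\Theta(\varepsilon N)$ with asymptotic type profile proportional to $\mu(M(t))$. This uses symmetry of $\kappa$ (and hence $\kappa(t)$), which implies that the left Perron eigenvector $\mu$ of $M(t)=\kappa(t)\circ\pi^N(t)$ equals $\pi^N(t)\circ v$, where $v$ is the right eigenvector; the factor $\pi^N(t)$ converts survival probabilities of the BP rooted at each type into type densities in the giant component. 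Since a component of size $\Theta(N)$ is frozen at total rate $\Theta(\lambda(N)N)\to\infty$, any developing giant is removed on a timescale $o(1)$, so supercriticality is corrected essentially instantaneously. This is what forces $\rho(\kappa(t)\circ\pi(t))=1$ in the limit, and gives the direction $-\mu(\kappa(t)\circ\pi(t))$ for $\pi'(t)$; together with the continuous freezing-mass $\phi(t)$ arising from the cumulative amount removed per unit time, this recovers \eqref{eq:critcondition}--\eqref{eq:flowDE}.

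Tightness of $\pi^N$ in the uniform topology follows from the simplex-valued range together with a modulus-of-continuity estimate: on any interval $[t,t+\delta]$, the mass frozen is controlled by the combined BP/freezing analysis, giving uniform control on jumps of size $\gg\delta$. Continuity of the limit is then automatic, and the identification above places any limit in the class covered by Theorem \ref{DEtheorem}. The main obstacle will be the post-critical analysis, specifically two ingredients: quantitative control of the giant-component type profile for a near-critical IRG (with explicit error estimates in terms of $\varepsilon$ and $N$ so they can be combined with the $\delta\downarrow 0$ scaling), and the argument that rapid freezing of nascent giants pins the system exactly at criticality rather than, say, slightly subcritically. Both generalise R\'ath's single-type mean-field argument in \cite{RathFP}, but the multitype setting forces one to track the full Perron structure (eigenvalue and eigenvector) rather than a scalar density, and to verify the regularity needed to apply Theorem \ref{DEtheorem}.
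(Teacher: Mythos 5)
Your high-level architecture (tightness plus identification of subsequential limits, then invoke uniqueness from Theorem \ref{DEtheorem}) matches the paper, and your observation that the symmetry of $\kappa$ links the left Perron eigenvector to survival probabilities via $\mu_i/\pi_i$ is exactly the structural fact used in the paper (cf.\ Lemma \ref{O1boundzeta}). But there are two substantive gaps.

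First, the post-critical identification as you have sketched it does not work. You treat mass loss as coming from nascent \emph{giant} components whose type profile is read off from a supercritical branching-process survival vector. But a defining feature of frozen percolation at the critical lightning scaling \eqref{eq:lambdascaling} is that almost all mass is lost by freezing components which are large ($\to\infty$) but of size $o(N)$; a genuine $\Theta(N)$ giant essentially never has time to assemble. Thus each freezing event is a jump of size $o(1)$ in $\pi^N$, the removed set is not a giant, and the supercritical BP survival heuristic does not directly supply its type profile. The paper replaces this with a different object: for a uniformly chosen vertex $v$ in a near-critical IRG, the expected type vector $\E{W^{\ge R}}$ of vertices at distance at least $R$ from $v$ has direction $\mu(\kappa\circ\pi)+O(\delta)$ uniformly (Theorem \ref{typecontrolthm}). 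This covers subcritical, critical, and slightly supercritical regimes simultaneously and applies to \emph{all} large components, not just a giant. Moreover, since each freezing contributes only an $o(1)$ correction to $\pi^N$, one must control the \emph{cumulative} error over $\Theta(N\lambda(N))\to\infty$ freezing events; your time-discretisation does not address this. The paper handles it with a Doob-decomposition argument (Lemma \ref{Yxilemma}) applied to the radius-indexed processes $\Psi^N(r,i,t)$, yielding a uniform-in-time $L^1$ bound on the accumulated deviation from the $\mu$-direction.

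Second, you discard the single-type convergence input (R\'ath's Theorem \ref{FPconvtosmol}, $\Phi^N\to\Phi$), which the paper uses pervasively: it gives the critical time $t_c$ matching $T_\mathrm{gel}$, almost-sure continuity of any weak limit (continuity is not ``automatic'' from tightness), and strict monotonicity of $\Phi$ on $[t_c,\infty)$, which is the lever in the proof that weak limits are not subcritical after $t_c$. Your pre-critical second-moment bound is a reasonable alternative for $t\le t_c$, but it does not supply these post-critical properties, and you give no replacement. You would also need something like Lemma \ref{defnetalemma} (a uniform lower bound $\pi^N_i(T)\ge\eta$ whp) to make the Lipschitz estimate on $\mu$ applicable along the flow; this is absent from your sketch.
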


\subsection{Background}

We now place our results into context, and introduce the two main probabilistic objects in more detail.

\subsubsection{Mean-field frozen percolation}

In \cite{RathFP}, R\'ath considers the proportions of vertices which lie in components of different sizes in a family of mean-field frozen percolation processes. For $\ell\ge 1$, we let
\begin{equation}\label{eq:defnvN}v^N_\ell(t):= \frac{1}{N}\#\{\text{vertices in size $\ell$ components in }\mathcal{G}^N(t)\},\quad \ell\ge 1,\, t\ge 0,\end{equation}
and $\Phi^N(t):= \sum_{\ell\ge 1}v^N_\ell(t)$. We note that $(v^N(t),t\ge 0)$ is itself an ($\ell_1$-valued) Markov process, since the transition rates depend only on component sizes, and not on graph structure within components. Because of this, one can view mean-field frozen percolation as a graph-valued coalescent-fragmentation process with multiplicative coalescence rates and linear deletion rates \cite{MartinRath17}. It is also helpful to reinterpret the dynamics for freezing by assigning to the vertices independent exponential clocks with rate $\lambda(N)$: when a vertex's clock rings, all the vertices in its current component are frozen and removed.

The main result of \cite{RathFP} is a hydrodynamic limit for these quantities, under the assumption that the (possibly random) initial conditions satisfy $v^N_\ell(0) \stackrel{d}\rightarrow v_\ell(0)$ in $\ell_1$. Before formally stating the theorem, let us first describe the limit, which is a solution to the following version of Smoluchowki's coagulation equations \cite{Smol16} with multiplicative kernel:

\begin{equation}\label{eq:smol}\frac{\mathrm{d}}{\mathrm{d}t} v_\ell(t) = \frac{\ell}{2}\sum_{m=1}^{\ell-1} v_m(t)v_{\ell-m}(t) - \ell v_\ell(t) \sum_{m=1}^\infty v_m(t),\quad \ell\ge 1.\end{equation}

Solutions to these equations are characterised by a \emph{gelation time} $T_{\mathrm{gel}}$, before which the total mass $\Phi(t):= \sum_{\ell=1}^\infty v_\ell(t)$ is constant, and thereafter is strictly decreasing. Mathematical treatment of \eqref{eq:smol} began with McLeod \cite{McLeod62}, who demonstrated existence and uniqueness of solutions on $t\in[0,1)$ under \emph{monodisperse initial conditions}, where $v(0)=(1,0,0,\ldots)$. (This corresponds to starting from an empty graph.) McLeod's results have been improved by several authors \cite{Kokholm88,Norris99,Laurencot00}. We will use  Normand and Zambotti's recent results \cite{NormandZambotti} on global existence and uniqueness of solutions to \eqref{eq:smol}.

\begin{theorem}[\cite{NormandZambotti}, Theorem 2.2]\label{smolunique}
Whenever $\sum_{\ell\ge 1}v_\ell(0)<\infty$, there exists a unique solution to Smoluchowski's equations \eqref{eq:smol} starting from $v(0)$. For this solution, $\Phi(t)$ is uniformly continuous on $[0,\infty)$. Indeed, $\Phi(t)$ is constant on $[0,T_{\mathrm{gel}}]$ and strictly decreasing on $[T_{\mathrm{gel}},\infty)$, where
\begin{equation}\label{eq:Tgel}
T_{\mathrm{gel}} = \frac{1}{\sum\limits_{\ell\ge 1}\ell v_\ell(0)}.
\end{equation}
\end{theorem}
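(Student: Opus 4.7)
The plan is to solve \eqref{eq:smol} through the generating function
$$F(z,t) := \sum_{\ell \ge 1} v_\ell(t) z^\ell, \qquad z \in [0,1],$$
which exploits the multiplicative kernel structure. Formally multiplying \eqref{eq:smol} by $z^\ell$ and summing yields the quasilinear PDE
$$\partial_t F(z,t) = z\,\bigl(F(z,t) - \Phi(t)\bigr)\,\partial_z F(z,t), \qquad \Phi(t) = F(1,t),$$
which I would solve by the method of characteristics: $F$ is constant along trajectories $\dot z = -z(F-\Phi)$, and recovering $v_\ell(t)$ as Taylor coefficients at $z=0$ is routine once $F$ is identified.

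Pre-gelation, summing \eqref{eq:smol} gives $\dot\Phi \equiv 0$ formally, so the characteristics are explicit: $z(t) = \Xi_t(z_0) := z_0 \exp\bigl((\Phi(0) - F(z_0,0))t\bigr)$. I would take $F(\cdot,t)$ to be defined implicitly by $F(\Xi_t(z_0),t) = F(z_0,0)$ on the set where $\Xi_t:[0,1]\to[0,1]$ is a diffeomorphism, verify \eqref{eq:smol} coordinate-wise, and deduce local existence. Computing $\Xi_t'(1) = 1 - tM_1(0)$ with $M_1(t) := \sum_\ell \ell v_\ell(t)$ pinpoints the breakdown of injectivity at $T_{\mathrm{gel}} = 1/M_1(0)$, in agreement with the moment ODE $\dot M_1 = M_1^2$ obtained by multiplying \eqref{eq:smol} by $\ell$ and summing.

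For $t > T_{\mathrm{gel}}$, $\Xi_t$ ceases to be injective near $z_0 = 1$, and I would instead restrict $\Xi_t$ to $[0, z^*(t)]$, where $z^*(t) \in (0,1)$ is its unique interior critical point. Setting $\Phi(t) := F(z^*(t), 0)$, taking $F(z,t) := F(\Xi_t^{-1}(z),0)$ on $[0,\Xi_t(z^*(t))]$ and extending continuously by $\Phi(t)$ on the remainder produces the post-gelation solution; strict monotonicity of $z^*(\cdot)$ yields strict decrease of $\Phi$, and uniform continuity on $[0,\infty)$ follows from continuity of $t\mapsto z^*(t)$ together with $\Phi(T_{\mathrm{gel}}^-) = F(1,0) = \Phi(0)$. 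Pre-gelation uniqueness is a Grönwall estimate on $\|v(t) - \tilde v(t)\|_{\ell_1}$, using the uniform bound $\Phi \le \Phi(0)$ and the fact that for each fixed $\ell$ only finitely many terms appear in the convolution.

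The hard part is post-gelation uniqueness, because the quasilinear PDE develops a shock at $z=1$ at time $T_{\mathrm{gel}}$ and in general admits many weak solutions; selecting the physically meaningful branch requires an entropy-type argument. I would pin this down by showing that any $\ell_1$-valued solution of \eqref{eq:smol} must have $\Phi(t)$ equal to the largest value of $F(\cdot,0)$ compatible with non-crossing characteristics at time $t$, namely $F(z^*(t),0)$, equivalently by using the moment identity applied only to mass still in finite components (which continues to satisfy $\dot M_1 = M_1^2$ throughout, interpreted relative to the remaining mass) to rule out any alternative profile of $\Phi$.
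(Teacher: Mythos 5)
This theorem is not proved in the paper under review: it is quoted verbatim as Theorem 2.2 of \cite{NormandZambotti}, so there is no ``paper's own proof'' to compare against. Your strategy --- passing to the generating function $F$, deriving the quasilinear PDE $\partial_t F = z(F-\Phi)\partial_z F$, solving by characteristics, and locating $T_{\mathrm{gel}}$ via $\Xi_t'(1)=1-tM_1(0)$ --- is the right one and matches the standard treatment; the pre-gelation portion (up to the routine remark about a weighted norm in the Gr\"onwall step, since the bare $\ell_1$ estimate picks up an unsummable factor of $\ell$) is sound.

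The post-gelation construction has a real gap. Once $\Phi$ starts to decrease the characteristic map for the actual PDE is $z_0\mapsto z_0\exp\bigl(\int_0^t\Phi(s)\,\mathrm{d}s - F(z_0,0)t\bigr)$, not your $\Xi_t(z_0)=z_0\exp\bigl((\Phi(0)-F(z_0,0))t\bigr)$; the two differ by the $z_0$-independent factor $\exp\bigl(\int_0^t(\Phi(s)-\Phi(0))\,\mathrm{d}s\bigr)<1$. The critical point $z^*(t)$ (where $tz^*F'(z^*,0)=1$) is the same in both, but the image is not: with your formula $\Xi_t(z^*(t))>1$ for $t>T_{\mathrm{gel}}$ (since $\Xi_t(1)=1$ and $\Xi_t$ is decreasing on $(z^*,1)$), so there is no ``remainder'' to fill in, and the resulting value $F(1,t)=F(\Xi_t^{-1}(1),0)$ is strictly less than $F(z^*(t),0)$, contradicting your own assignment $\Phi(t):=F(z^*(t),0)$. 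In fact your $F$ solves $\partial_t F = z(F-\Phi(0))\partial_z F$, not $\partial_t F = z(F-\Phi(t))\partial_z F$. The correct self-consistency condition is that the characteristic through $z^*(t)$ lands exactly at $z=1$, which forces $\int_0^t\Phi(s)\,\mathrm{d}s = F(z^*(t),0)\,t - \log z^*(t)$; differentiating together with $tz^*F'(z^*)=1$ recovers $\Phi(t)=F(z^*(t),0)$, but this identity has to be \emph{derived}, not imposed on top of the pre-gelation $\Xi_t$.

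The uniqueness argument you sketch also fails as stated. The derivation of $\dot M_1 = M_1^2$ requires the $\Phi M_2$ terms to cancel, hence $M_2<\infty$; but $M_1(t)\to\infty$ as $t\uparrow T_{\mathrm{gel}}$ and $M_2=\infty$ on $[T_{\mathrm{gel}},\infty)$, and the cancellation becomes an ill-defined $\infty-\infty$ --- which is precisely the mechanism that allows $\Phi$ to decrease. There is no way to ``interpret the moment identity relative to the remaining mass'' that rescues it past gelation. A correct post-gelation uniqueness argument must stay at the generating-function level (showing that any $\ell_1$-valued solution forces $\Phi$ to obey the closed equation coming from the self-consistency of the characteristics), and this is exactly the technical content of \cite{NormandZambotti} that your proposal sidesteps.
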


We now state a somewhat extended version of Theorem 1.2 of R\'ath \cite{RathFP}, concerning convergence of $(v^N(\cdot))$ towards solutions of \eqref{eq:smol}. R\'ath's original version assumes that $v(0)$ has finite support, but this may be extended by following closely an argument of Merle and Normand \cite{MerleNormand14} for a related process where components are frozen when they reach a certain threshold size. We refer the reader to Chapter 4 of the author's doctoral thesis \cite{Yeothesis} for the proof.

\begin{theorem}[\cite{Yeothesis}, Theorem 4.2]\label{FPconvtosmol}Consider a sequence $(v^N(\cdot))$ of mean-field frozen percolation processes satisfying \eqref{eq:lambdascaling}, for which $v^N(0)\stackrel{d}\rightarrow v(0)\in\ell_1$, and let $v$ be the unique solution to \eqref{eq:smol} started from $v(0)$, as given by Theorem \ref{smolunique}. Then $v^N\rightarrow v$ in distribution in $\mathbb{D}([0,\infty),\ell_1)$, with respect to the uniform topology. In particular, $\Phi^N\rightarrow \Phi$ in distribution in $\mathbb{D}([0,\infty))$, again with respect to the uniform topology.
\end{theorem}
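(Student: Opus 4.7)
The plan is the Ethier-Kurtz programme---tightness, martingale identification of any limit, and then uniqueness via Theorem \ref{smolunique}---adapted to the $\ell_1$-valued setting and the unusual uniform-on-compacts topology on $\mathbb{D}([0,\infty),\ell_1)$. I would split the argument into a finite-support step following \cite{RathFP}, and a truncation step in the spirit of \cite{MerleNormand14}.

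For finite-support initial data, the Markov chain $v^N(\cdot)$ has transition rates depending only on $v^N$ itself, and I would write down its generator explicitly. The drift of $v^N_\ell(t)$ is
\[\frac{\ell}{2}\sum_{m=1}^{\ell-1}v^N_m(t)v^N_{\ell-m}(t) - \ell v^N_\ell(t)\Phi^N(t) - \lambda(N)\ell v^N_\ell(t) + O(1/N),\]
which converges to the right-hand side of \eqref{eq:smol} since $\lambda(N)\to 0$. The associated martingale has jumps of size $O(1/N)$, so its quadratic variation on $[0,T]$ is $O(1/N)$ uniformly in $\ell$ on any bounded range. Standard tightness estimates then give pre-compactness of $(v^N_\ell)_{\ell\le L}$ in $\mathbb{D}([0,T],\R^L)$; any subsequential limit solves \eqref{eq:smol}; and Theorem \ref{smolunique} identifies it as $v$.

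To extend to general $v(0)\in\ell_1$, I would approximate $v(0)$ by its truncation $v^{(L)}(0):=(v_1(0),\ldots,v_L(0),0,\ldots)$ and couple the two frozen-percolation processes through a common Poisson graphical construction. The $\ell_1$-discrepancy at time $t$ should be controlled by the initial discrepancy plus a term accounting for interactions with the extra mass above $L$, which stays small because that mass is multiplicatively active and is hence either incorporated into large components (that are frozen quickly) or stays in few pieces. Letting $L\to\infty$ after $N\to\infty$ then yields both the tail estimate $\sup_{t\le T}\sum_{\ell>L}v^N_\ell(t)=o_L(1)$ needed to upgrade componentwise to $\ell_1$-valued convergence, and the convergence itself.

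The principal obstacle is that the topology is uniform rather than Skorokhod. The limit $\Phi$ is continuous by Theorem \ref{smolunique}, whereas $\Phi^N$ has negative jumps of size $|\mathcal{C}|/N$ each time a component $\mathcal{C}$ is frozen, so uniform convergence demands $\max\{|\mathcal{C}|/N:\mathcal{C}\text{ frozen in }[0,T]\}\to 0$ in probability. Heuristically this is immediate, since a component of size $aN$ is frozen at rate $aN\lambda(N)\to\infty$, but a rigorous proof needs uniform-in-$t$ control of the maximum component size throughout the critical window around $T_{\mathrm{gel}}$. I would obtain this by combining a second-moment estimate for $\sum_\ell\ell v^N_\ell(t)$---which diverges at $T_{\mathrm{gel}}$ but at a controlled rate---with a stochastic comparison against the un-frozen multiplicative coalescent.
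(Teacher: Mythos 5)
The paper does not give a proof of Theorem \ref{FPconvtosmol}: it simply cites R\'ath \cite{RathFP} for the finite-support case, notes that the extension to general $v(0)\in\ell_1$ follows a truncation argument in the spirit of Merle--Normand \cite{MerleNormand14}, and defers the details to Chapter 4 of \cite{Yeothesis}. Your two-stage plan (generator/martingale argument for finite-support initial data, then a Merle--Normand-style truncation and coupling, together with control of the maximal frozen-component size to upgrade Skorokhod to uniform convergence) is precisely the cited route, and the drift, quadratic-variation, and tightness computations you sketch for the finite-support step are correct in outline.
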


\begin{note}
We emphasise that this convergence result does not depend on the exact asymptotic scaling of $\lambda(N)$, so long as \eqref{eq:lambdascaling} holds. R\'ath also studies other scalings for $\lambda(N)$, for which one does not observe self-organised criticality.
\end{note}

%\begin{remark}
%R\'ath's results give considerably more information about the power-law tail of $v_k(t)$ for $t\ge T_{\mathrm{gel}}$. This tail behaviour distinguishes the Smoluchowski model from, among others, the Flory model \cite{Flory53}, where giant blocks continue to interact with small blocks, which corresponds to the original Erd\H{o}s--R\'enyi graph process (without freezing dynamics). In fact, a very broad range of mechanisms for interactions between large and small blocks have been studied; we refer the interested reader to \cite{Norris00} and references therein.
%\end{remark}

\subsubsection{Inhomogeneous random graphs}

In \eqref{eq:textGNkedges}, we described a generalisation of the Erd\H{o}s--R\'enyi random graph $G(N,p)$, where vertices have one of $k$ types, and the probability that a particular edge is present depends on the pair of types of its incident vertices. This model of \emph{inhomogeneous random graphs} (also known as the \emph{stochastic block model} \cite{HLL83}) was introduced by S\"{o}derberg \cite{Soderberg02}, and has been studied in many contexts both theoretically and in applications. Rather than attempt to survey the vast literature, we direct the reader to Abbe's recent review \cite{Abbe18} and the many references therein. We will follow closely the notation and language of Bollob\'as, Janson and Riordan \cite{BJRinhomog}, who gave the first rigorous treatment of this model, in a version with more general type-spaces.

Throughout, we fix a positive integer $k$. A \emph{graph with $k$ types} is a graph $G$ together with a \emph{type function}, $\tau: V(G)\rightarrow[k]$. A \emph{kernel} is a $k\times k$ real symmetric matrix with non-negative entries.

\begin{definition}\label{defnIRGs}
For each $N\in\N$, $p=(p_1,\ldots,p_k)\in\N_0^k$ and $\kappa$ a kernel, the \emph{inhomogeneous random graph} $G^N(p,\kappa)$ is a random graph with $k$ types defined as follows:
\begin{itemize}
\item $G^N(p,\kappa)$ has vertex set $\left\{1,2,\ldots,\sum_{i=1}^k p_i\right \}$.
\item The type function $\tau$ is chosen uniformly at random from the $\binom{\sum p_i}{p_1,\,\ldots\, ,p_k}$ functions $f:[\sum p_i]\rightarrow [k]$ such that $|f^{-1}(\{i\})|=p_i$ for each $i$.
\item Conditional on $\tau$, each edge $\{v,w\}$ (for $v\ne w \in[\sum p_i]$) is present with probability
$$1-\exp(-\kappa_{\tau(v),\tau(w)}/N),$$
independently of all other pairs.
\end{itemize}
\end{definition}

%\begin{remark}
%We are using slightly different notation to \cite{BJRinhomog}. On many occasions, we will assume $\sum p_i=N$, whence $p/N$ is a probability distribution. In the next chapter, the main process we study will involve varying $\pi=p/N$, a vector which records the proportions of each type. This corresponds to the measure on the (generalised) ground space as defined in \cite{BJRinhomog}, where it is mostly taken to be fixed.
%\end{remark}

%\begin{remark}
%Inhomogeneous random graphs have been studied in recent years across a broad range of scientific disciplines, sometimes under different names. \emph{Stochastic Block Model} is most widely-used after \cite{HLL83}, especially concerning the statistical questions of (in our language) recovering the types of the vertices with high accuracy from a realisation of the graph. Early versions of this \cite{BCLS87,Boppana87} focus on various extremal cut properties, but this has grown to encompass many applications in social network theory and far beyond. Rather than attempt to survey the vast literature, we direct the reader to Abbe's review \cite{Abbe18} and the many references therein.
%\end{remark}

When we consider the proportions of vertices of each type, we will refer to the sets
\begin{equation}\label{eq:defnPi}\Pi_1:= \left\{\pi\in \R_{\ge 0}^k\,:\, \sum_{i\in[k]} \pi_i=1\right\},\quad \Pi_{\le 1}:= \left\{\pi\in \R_{\ge 0}^k\,:\, \sum_{i\in[k]} \pi_i\le 1\right\},\end{equation}
of probability distributions and subdistributions on $[k]$.

As is the case for many random graph models, with the canonical example being the Erd\H{o}s--R\'enyi graph $G(N,c/N)$ around $c=1$, IRGs undergo a phase transition whereby an asymptotically positive proportion of the vertices form a \emph{giant component}. Recall from Definition \ref{defntypeflow} the matrix notation $[\kappa\circ\pi]_{i,j}:= \kappa_{i,j}\pi_j$, and $\rho(A)$ the principal eigenvalue of positive matrix $A$, as given by Perron--Frobenius theory. We also write $L_1(G)$ for the size of the largest component in a graph $G$. The following result of \cite{BJRinhomog} shows that $\rho(\kappa\circ\pi)$ acts as the analogue of $c$ in $G(N,c/N)$ in this phase transition.

\begin{prop}[\cite{BJRinhomog}, Theorem 3.1]\label{BJRL1basic}
Fix a positive kernel $\kappa\in\R_+^{k\times k}$ and subdistribution $\pi\in\Pi_{\le 1}$. Suppose a sequence $p^N\in\N_0^k$ satisfies $p^N/N \rightarrow \pi$ as $N\rightarrow\infty$. Then, with high probability as $N\rightarrow\infty$,
\begin{equation}\label{eq:L1phasetrans}L_1 \left(G^N(p^N,\kappa)\right) =\begin{cases}o(N)&\quad \rho(\kappa\circ \pi)\le 1\\ \Theta(N) &\quad \rho(\kappa\circ \pi)>1.\end{cases}\end{equation}
\end{prop}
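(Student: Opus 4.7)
The plan is to analyse the component structure of $G^N(p^N,\kappa)$ via a breadth-first exploration, comparing it to a multitype Galton--Watson process. Starting from a vertex $v$ of type $i$, each newly discovered type-$j$ vertex reveals an approximately $\Bin(p^N_\ell, 1-e^{-\kappa_{j,\ell}/N})\approx \Po(\kappa_{j,\ell}\pi_\ell)$ number of type-$\ell$ neighbours, as long as the exploration has so far consumed only $o(N)$ type-$\ell$ vertices. Thus the exploration couples closely to the multitype Poisson branching process whose offspring mean matrix is precisely $\kappa\circ\pi$. By Perron--Frobenius theory, this branching process has a positive survival probability $\zeta_i>0$ from each type-$i$ root if and only if $\rho(\kappa\circ\pi)>1$, and otherwise dies out almost surely.

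For the subcritical claim, I would first treat the strictly subcritical case $\rho(\kappa\circ\pi)<1$. Here the expected total progeny of the branching process, from any starting type, is finite and uniformly bounded. A first-moment bound on the number of vertices whose exploration survives more than $C\log N$ generations, coupled with exponential decay of the branching-process tail, yields $L_1 = O(\log N)$ with high probability. The critical case $\rho(\kappa\circ\pi)=1$ requires a truncation argument: for any $\varepsilon>0$ one chooses $K$ so that the limiting branching process has total progeny at most $K$ with probability at least $1-\varepsilon$; since the coupling is valid until the exploration has explored $o(N)$ vertices, the expected proportion of vertices in components of size exceeding $K$ in $G^N(p^N,\kappa)$ is at most $\varepsilon+o(1)$. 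Letting $\varepsilon\to 0$ after $N\to\infty$ forces $L_1/N\to 0$ in probability.

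For the supercritical case $\rho(\kappa\circ\pi)>1$, the plan has two stages. First, by coupling and a second-moment argument, the number of vertices lying in components of size at least $K$ concentrates around $(\zeta+o(1))N$, where $\zeta=\sum_i \pi_i \zeta_i$. The variance bound comes from the fact that two independent explorations from random roots decouple until the corresponding branching processes collide, an event of probability $O(K^2/N)$. Second, I would apply a sprinkling argument: split each connection probability $1-e^{-\kappa_{i,j}/N}$ into two independent rounds with parameters $1-e^{-(1-\delta)\kappa_{i,j}/N}$ and $1-e^{-\delta\kappa_{i,j}/N}$. After the first round, whp there are $\Theta(N)$ vertices organised into $o(N/K)$ components of size at least $K$. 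In the sprinkled second round, strict positivity of $\kappa$ guarantees that each pair of such large components acquires a connecting edge with probability $\Omega(K^2/N)$, so a direct union bound (or a routine Erd\H{o}s--R\'enyi-on-contracted-components estimate) merges them all into one giant component of size $(\zeta+o(1))N$.

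I expect the main obstacle to be the sprinkling/uniqueness step in the supercritical case: producing many large components via the exploration is comparatively routine, but ruling out two components of macroscopic size requires carefully exploiting the strict positivity of $\kappa$ and keeping track of what can be coupled across the two sprinkling rounds. Secondary difficulties are the critical case $\rho=1$, where no exponential tail is available and one must rely on a soft truncation/uniform-integrability argument, and ensuring that the Poisson approximation to the binomial offspring distribution is uniformly valid throughout the first $o(N)$ steps of the exploration so that the branching-process coupling really is informative about $L_1$.
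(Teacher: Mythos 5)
This proposition is cited in the paper directly from Bollob\'as--Janson--Riordan \cite{BJRinhomog} (their Theorem 3.1) and is not reproved here, so there is no in-paper proof to compare against. Your outline is essentially the standard argument in \cite{BJRinhomog}: couple the breadth-first exploration to the multitype Poisson branching process with mean matrix $\kappa\circ\pi$, use first- and second-moment estimates and the branching process survival probability $\zeta$ for the component-size concentration, and finish the supercritical uniqueness step by sprinkling, exploiting the strict positivity of $\kappa$ to connect the large components. Your treatment of the critical case ($\rho=1$) via truncation and uniform integrability, and your observation that the main delicacy lies in the sprinkling/uniqueness stage, both correctly reflect the structure of the published proof. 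The only point worth flagging is that the strictly subcritical claim $L_1=O(\log N)$ is stronger than the stated $o(N)$ and does require the branching process tail to decay exponentially, which holds because $\rho<1$ and the kernel is bounded; for the purposes of the proposition the weaker $o(N)$ bound obtained by the same truncation argument used in the critical case would already suffice.
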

As for $G(N,c/N)$, we follow \cite{BJRinhomog} in saying that an IRG $G^N(p,\kappa)$ is \emph{subcritical} if $\rho(\kappa\circ\pi)<1$, \emph{critical} if $\rho(\kappa\circ\pi)=1$, and \emph{supercritical} if $\rho(\kappa\circ\pi)$, with $\pi=p/N$ as before.

To motivate Proposition \ref{BJRL1basic}, note that the number of type $j$ neighbours of a type $i$ vertex in $G^N(p,\kappa)$ is distributed as $\mathrm{Bin}(p_j, 1-\exp(-\kappa_{i,j}/N))$ when $j\ne i$, and $\mathrm{Bin}(p_i-1,1-\exp(-\kappa_{i,i}/N))$ when $i=j$. In both cases, when $p$ is large, this distribution is approximately $\mathrm{Poisson}([\kappa\circ \pi]_{i,j})$ and, in particular, its expectation is approximately $[\kappa\circ\pi]_{i,j}$. The authors of \cite{BJRinhomog} extend this into a precise comparison of the local structure of the IRG and a multitype branching process with Poisson offspring distributions.

As we will discuss in greater detail later as Proposition \ref{BJRL1enhanced}, the size of a giant component in the graph corresponds to survival probability of the corresponding branching process, which is controlled by $\rho(\kappa\circ\pi)$ exactly as in \eqref{eq:L1phasetrans}.

\subsubsection{Inhomogeneous frozen percolation}\label{inhomogFP}

We now define more rigorously a frozen percolation process where the initial graph is a IRG with $k$ types, which is the subject of Theorem \ref{weaklimitthm}.

\begin{definition}\label{multitypeFP}
Let $\kappa$ be a kernel, $\lambda>0$ a freezing rate, and let $p$ be a (possibly random) element of $\N_0^k$ satisfying $\sum p_i=N$. We then define $(\mathcal{G}^{N,p,\kappa,\lambda}(t),t\ge 0)$, the \emph{$k$-type frozen percolation process with index $N$} to be the mean-field FPP started from a realisation of $G^N(p,\kappa)$, along with the type function $\tau:[N]\to[k]$.

\par
We will typically suppress dependence on $\pi,\kappa$ and $\lambda=\lambda(N)$ for convenience. We associate to $\mathcal{G}^N(t)$, the vector $\pi^N(t)$ recording proportions of alive vertices of each type, as in \eqref{eq:defnpiN}, and $\Phi^N(t):= ||\pi^N(t)||_1$.
\end{definition}

Our motivation for studying this family of IRGs as the initial configuration is that they form a stable class under the dynamics of frozen percolation. We make this precise in the following statement, which is proved briefly in Section \ref{GNtdistproof}, and underpins the proof of Theorem \ref{weaklimitthm}.

\begin{prop}\label{GNtdistprop}Conditional on $(\pi^N(s),\, s\in[0,t])$, let $\hat{\mathcal{G}}^N(t)$ be obtained from $\mathcal G^{N,p,\kappa,\lambda}(t)$, by uniformly relabelling the alive vertices with labels $\{1,\ldots,N\Phi^N(t)\}$. With this conditioning, $\hat{\mathcal{G}}^N(t)$ has the same distribution as $G^N(N\pi^N(t),\kappa(t))$, on the set of graphs with $k$ types.\end{prop}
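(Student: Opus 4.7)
The plan is to prove this by a Markov-chain induction over the events (edge arrivals and freezing events) of the frozen percolation process in $[0,t]$. The base case follows directly from Definition \ref{multitypeFP}: at $t=0$ we have $\mathcal{G}^N(0) = G^N(p, \kappa) = G^N(N\pi^N(0), \kappa(0))$. For the inductive step one needs two preservation properties of the IRG family under the dynamics.

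The first concerns the edge evolution between freezing events. During any interval $[s_0, s_1]$ with no freezing, the alive vertex set $A$ is fixed and each non-edge among alive vertices becomes an edge independently at rate $1/N$. Hence the non-edge probability between alive vertices of types $i,j$ evolves from $e^{-\kappa'_{i,j}/N}$ to $e^{-(\kappa'_{i,j}+(s_1-s_0))/N}$, matching the kernel update $\kappa' \mapsto \kappa' + (s_1 - s_0)\mathbf{1}$. Because the lightning clocks are i.i.d.\ exponentials of rate $\lambda(N)$ attached to each alive vertex, and in particular are independent of the edge structure, conditioning on no freezing in $[s_0, s_1]$ does not bias the edge distribution.

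The second property is a consistency lemma: if $G \sim G^N(p, \kappa)$ on vertex set $V$ and $S \subset V$ is (almost surely) a union of connected components of $G$, then conditional on $(S, G|_S)$, the restriction $G|_{V\setminus S}$ is distributed as $G^N(p - p_S, \kappa)$ on $V \setminus S$. This follows from the mutual independence of edges in the IRG: conditioning on $(S, G|_S)$ involves only edges within $S$ together with the forced absence of edges crossing out of $S$, neither of which influences the distribution of edges inside $V\setminus S$. Applied at a freezing event, with $S = C$ the frozen component, this recovers an IRG with the same kernel $\kappa(s)$ on the remaining alive vertices.

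The main technical obstacle is formalising this consistency lemma for the $S$ arising dynamically from frozen percolation, together with the size-biased lightning rates (a component $C$ freezes at rate $\lambda(N)|C|$). The resolution is that the freezing rate of $C$ is itself determined by $C$'s internal structure, so ``which component freezes next'' is measurable with respect to exactly the $\sigma$-algebra exposed in the consistency lemma. Finally, conditioning on $\pi^N(\cdot)|_{[0,t]}$ reveals only the times and type profiles of frozen components, not the specific vertex identities; the uniform initial type assignment together with within-type exchangeability of the dynamics then give that the set of alive vertices is uniform among subsets with the correct type counts, which is exactly what the uniform relabelling onto $\{1,\ldots,N\Phi^N(t)\}$ encodes.
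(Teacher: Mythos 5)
Your proposal takes a genuinely different route from the paper. You set up an induction across the dynamics: between freezes the non-edge probability evolves from $e^{-\kappa'_{i,j}/N}$ to $e^{-(\kappa'_{i,j}+(s_1-s_0))/N}$ (matching $\kappa\mapsto\kappa+t\mathbf{1}$), and at each freeze you invoke a restriction lemma for IRGs (conditioning on a union of components $S$, its internal structure, and the absence of crossing edges leaves $G|_{V\setminus S}$ IRG-distributed), finishing with an exchangeability step to pass from the full history to the natural filtration of $\pi^N$. The paper instead gives a one-shot argument: it represents edge arrivals and lightning as independent Poisson point processes $\mathcal{E}$ on $\binom{[P]}{2}\times[0,\infty)$ and $\mathcal{L}$ on $[P]\times[0,\infty)$, works with a finer filtration $\hat{\mathcal{F}}^N(t)$ recording the full type function and the alive set $\mathcal{A}(t)$ as well as $\pi^N|_{[0,t]}$, and observes directly that the event $\{\tau=\mathbf{t},\ \mathcal{A}(t)=A\}$ is measurable with respect to the initial types, the initial edges and $\mathcal{E}$-atoms in $\binom{[P]}{2}\setminus\binom{A}{2}$, and $\mathcal{L}$; hence the edges inside $\binom{A}{2}$ remain untouched and conditionally independent with the correct probabilities, and no induction is needed. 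Your approach buys a modular, transfer-operator picture of the process, at the cost of the bookkeeping at each freeze which you correctly flag as the main obstacle; to close that gap cleanly you should, as the paper does, first establish the claim under the finer filtration recording the \emph{identity} of the frozen component and the alive set (not just the type profiles visible in $\pi^N$), and only then average down to the stated conditioning via the uniform relabelling. Once that detour is made explicit, your argument and the paper's reduce to the same underlying independence statement.
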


\subsection{Discussion}
\subsubsection{Motivating type flows}
Before starting the details of the proof, we justify briefly why the three conditions \eqref{eq:pregelation}, \eqref{eq:critcondition} and \eqref{eq:flowDE} are reasonable as a description of the limit of $k$-type FP processes.

An initial time-interval $[0,T_{\mathrm{gel}}]$ during which asymptotically zero mass is lost is a feature of solutions to \eqref{eq:smol}, and so \eqref{eq:pregelation} should hold, with $t_c=T_{\mathrm{gel}}$, with $T_{\mathrm{gel}}$ given by \eqref{eq:Tgel} for $v(0)$ corresponding to limits of $\mathcal{G}^N(0)$.

Thereafter, the graphs which are present during the process should be critical. In \cite{RathFP}, R\'ath characterises this via a power-law condition on the tail of $v_\ell(t)$ as $\ell\rightarrow\infty$. However, in the more specific setting of IRGs, criticality can be characterised by $\rho=1$ as in \eqref{eq:critcondition}, from which the power-law tail follows \cite{MiermontMultitype}. Note that an edge between alive vertices at time $t$ was either present in the initial graph, or was added during $[0,t]$, hence the kernel describing $\mathcal{G}^N(t)$ should be $\kappa(t)=\kappa(0)+t\mathbf{1}$, as in \eqref{eq:critcondition}.

The most interesting property is \eqref{eq:flowDE}, which describes the proportion of types amongst the mass \emph{frozen} at time $t$, which is typically not the same as the proportion of types \emph{alive} at time $t$. We motivate this as follows:
\begin{itemize}
\item A key property of FP with critical scaling is that asymptotically almost all mass is lost as a result of freezing large (but not giant) components. This can be seen from the continuity of $\Phi(t)$ in Theorem \ref{smolunique}, describing limits of the total mass process.
\item Note that $\mu(\kappa\circ\pi)$ is a fixed point for the branching operator in the Poisson multitype branching process which approximates the local structure of an IRG, as discussed after Proposition \ref{BJRL1basic}. Results about proportions of types in large realisations of the branching process can be lifted to corresponding results for IRGs. We will see that the asymptotic proportion of types in the largest components of a suitable sequence of critical IRGs is also given by the left-eigenvector $\mu(\kappa\circ\pi)$. A more precise treatment occupies much of Section \ref{largecptssection}.
\end{itemize}

Combining these observations suggests that $\mu(\kappa(t)\circ\pi(t))$ should describe the proportion of types amongst mass lost at time $t$.

\subsubsection{Positivity and irreducibility}
It is worth stating that the conditions of Theorem \ref{DEtheorem} are a technical convenience rather than a requirement. We are keen to avoid the situation that at the critical time, the kernel $\kappa(t_c)$ has \emph{reducible} block form $\begin{pmatrix}\kappa^{(1)}(t_c)&0\\0&\kappa^{(2)}(t_c)\end{pmatrix}$, as then the eigenspace corresponding to the Perron root of $\kappa(t_c)\circ\pi(t_c)$ can have dimension greater than one.

It's possible to adjust the definition of $\mu$ appropriately: the mass of $\mu$ should be split between the irreducible type sets (that is, the types making up each block in the block form) in the same proportion as $\pi$, with the (unique) principal eigenvector applying on each block. However, we feel that the extra notation required to handle this one case at time $t=t_c=0$ would be an unwelcome distraction from the central argument.

%We will also require Lipschitz continuity results about $\mu$ as a function of $\pi$, which can be obtained more cheaply when the entries of $\kappa$ and $\pi$ are uniformly bounded below by a positive constant.

\subsubsection{Relation to mean-field forest fires, and other models}

{\bf Forest fires:} The forest fire \cite{DrosselSchwabl} is a model proposed to describe situations where occasional `disastrous events' damage the infrastructure required for future disastrous events to propagate widely through a population. See \cite{RhodesAnderson} for a discussion of the forest fire as a model for the spread of infectious disease.

Mathematically, the forest fire is a graph-valued process similar to frozen percolation. The main difference is that in the forest fire, when a vertex's clock rings, all the \emph{edges} of that vertex's component are removed (or \emph{burned}), but not the vertices themselves as in frozen percolation. As a consequence, the total number of vertices remains constant, and the system is recurrent.

R\'ath and T\'oth study the forest fire in the mean-field setting \cite{RathTothFF}, and show a result analogous to Theorem \ref{FPconvtosmol} from a broad class of initial conditions. The limit is described by a family of coupled ODEs, where the Smoluchowski equations \eqref{eq:smol} apply for $\ell \ge 2$, but an adjustment is required for $\ell=1$ to account for the input of singleton vertices through the burning dynamics. Unlike for frozen percolation, the non-monotonicity of the system makes handling both these \emph{critical forest fire equations}, and the convergence of the discrete models considerably more challenging.

For example, it is conjectured that the forest fire equations should converge to an identified \emph{stationary solution} from a broad class of initial conditions, but this remains open, even in the monodisperse case where the initial graph is empty.

However, it has been observed (see \cite{Yeothesis} \textsection5.1) that if one conditions on the sequence of \emph{ages} of the vertices (that is, how much time has elapsed since each vertex was last burned), then the graph in the forest fire model is also an IRG. Here the type space is continuous, possibly with atoms. In a forthcoming paper with Crane and R\'ath, we prove a concentration result for the empirical distribution of these ages analogous to Theorem \ref{weaklimitthm}, and show that the limit satisfies a measure-valued ODE similar to \eqref{eq:flowDE}, for which $\phi$ can be characterised explicitly.

Although there exist results concerning type-proportions in IRGs with (countably) infinitely many types \cite{deRaph14}, establishing that the measure-valued age ODE is well-posed is more taxing in this setting. However, convergence can be obtained more cheaply using results on the cluster process tracking the size of the component containing a fixed vertex, studied by Crane, Freeman and T\'oth \cite{CraneFF}. It is hoped that convergence (in time) of the limiting age distributions may be easier to handle than convergence of the component size densities.

{\bf Inhomogeneous edge addition:} In our model introduced in Definition \ref{multitypeFP}, all the inhomogeneity of the vertices is captured in the initial graph. One could equally ask about a frozen percolation process where the edge-arrival process is inhomogeneous, for example started from an empty graph. We can define such a model using exactly the same notation, but now with the condition that $\kappa(t)=t\kappa$, for $\kappa$ some fixed kernel.

A special case of this is $k=2$, and a bipartite edge-addition process corresponding to $\kappa=\begin{pmatrix}0&1\\1&0\end{pmatrix}$. Many aspects of such processes (without deletion mechanisms) have been studied in the literature \cite{ER64,Rucinski81,BollobasThomason85,Saltykov95,DevroyeMorin03}.

Our reason for studying inhomogeneous initial conditions first, rather than inhomogeneous edge-addition, is that R\'ath's results apply, especially the continuity of the limiting total mass $\Phi(t)$ in Theorem \ref{FPconvtosmol}. Since the model with inhomogeneous edge-addition is not an example of frozen percolation, this would require significant extra work. However, the arguments of Sections \ref{largecptssection} and \ref{weaklimitsection} would carry over directly to this alternative model.

It has been conjectured by Federico and R\'ath \cite{FedericoRath} that Theorem \ref{flowtimelimitthm} does not hold in this alternative model, with proposed counterexamples when $\kappa$ has some zero entries.

{\bf The configuration model:} In a master's thesis, Acz\'el \cite{Aczelthesis} studies frozen percolation with an initial graph given by Bollob\'as's configuration model \cite{Bollobas79}, which prescribes a uniform choice among graphs with given degree sequence. The key to this analysis is a version of our Proposition \ref{GNtdistprop}, in which it is shown that a family of so-called \emph{hybrid random graphs} combining the edges of a configuration model and, independently, an Erd\H{o}s--R\'enyi random graph, form a stable class under the dynamics of frozen percolation.

\subsection{Outline of paper}

In Section \ref{GNtdistproof}, we prove Proposition \ref{GNtdistprop}, that IRGs form a stable class under the dynamics of mean-field frozen percolation, which underpins the remainder of the argument.

In Section \ref{uniquesection} we show that solutions to the type flow equations are unique. We will introduce formally the multitype branching processes which approximate IRGs, as discussed informally in this introduction. For a given type flow, we study an associated process of such branching processes and show that their total progeny size distributions give solutions to the Smoluchowski equations \eqref{eq:smol}. Uniqueness of these solutions can be lifted (via $\Phi$, the total mass process) to uniqueness of type flow solutions.

We prove Theorem \ref{weaklimitthm} in Section \ref{weaklimitsection}, from which the existence result of Theorem \ref{DEtheorem} also follows. We show tightness of the family of processes $(\pi^N)$ in $\mathbb{D}^k([0,T])$, and argue that any limit must correspond to critical graphs as in \eqref{eq:critcondition}, else the continuity and monotonicity properties of $\Phi$ from Theorem \ref{smolunique} will fail.

Checking that weak limits $\pi$ satisfy \eqref{eq:flowDE} is, unsurprisingly, the most technical argument. We require precise concentration estimates for the proportion of types lost at each freezing time, which are mostly obtained in Section \ref{largecptssection} by studying the distribution of types \emph{far away from} a uniformly chosen vertex in an IRG. Then in Section \ref{integralsection} we use martingale arguments to control the accumulation of errors over all the freezing times.

Finally, we prove Theorem \ref{flowtimelimitthm} in the short Section \ref{timelimitssection}.

At various stages we require non-probabilistic results about the eigenvalues and eigenvectors of positive matrices. Some of these proofs are collected in Section \ref{technicalproofs} to avoid breaking the flow of the argument.

\subsection{Acknowledgments}
The author would like to thank Christina Goldschmidt and Bal\'azs R\'ath for reading several drafts, and productive discussions, and also Oliver Riordan for many suggested revisions, especially concerning Section \ref{ktypeBPs} and a simplification to the argument of Lemma \ref{Yxilemma}. The author was supported by EPSRC doctoral training grant EP/K503113, ISF grant 1325/14, and by the Joan and Reginald Coleman--Cohen Fund.

\section{Proof of Proposition \ref{GNtdistprop}}\label{GNtdistproof}
We restate and prove Proposition \ref{GNtdistprop}, for $\mathcal{G}^{N,p,\kappa,\lambda}(t)$ some $k$-type FPP with index $N$.

\begin{prop*}Conditional on $(\pi^N(s),\, s\in[0,t])$, let $\hat{\mathcal{G}}^N(t)$ be obtained from $\mathcal G^{N,p,\kappa,\lambda}(t)$, by uniformly relabelling the alive vertices with labels $\{1,\ldots,N\Phi^N(t)\}$. With this conditioning, $\hat{\mathcal{G}}^N(t)$ has the same distribution as $G^N(N\pi^N(t),\kappa(t))$, on the set of graphs with $k$ types.\end{prop*}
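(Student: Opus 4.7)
The plan is to establish a slightly stronger inductive claim which immediately implies the proposition. Writing $\mathcal{F}_t := \sigma(\pi^N(s),\, s\le t)$ and $A(t)\subseteq[N]$ for the alive set at time $t$, the claim is that conditional on $\mathcal{F}_t$, the joint law of $(A(t),\tau|_{A(t)},\mathcal{G}^N(t)|_{A(t)})$ is as follows: $A(t)$ is uniformly distributed among those subsets $A\subseteq[N]$ with $\tau|_A$ having profile $N\pi^N(t)$; and conditional on $A(t)$ with its inherited type function, $\mathcal{G}^N(t)|_{A(t)}$ is an IRG with kernel $\kappa(t)=\kappa+t\mathbf{1}$. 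Uniformly relabelling $A(t)$ by a bijection onto $\{1,\ldots,N\Phi^N(t)\}$ then transports this exactly to the law of $G^N(N\pi^N(t),\kappa(t))$ from Definition \ref{defnIRGs}.

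I would prove this claim by induction on the finite number of freezing events in $[0,t]$, noting that $\mathcal{F}_t$ is generated by their times and type compositions. Between two consecutive freezing events $s_i < s_{i+1}$, the conditioning reduces to the event of no freezing in the gap. Since the total freezing rate at time $s$ is $\lambda(N)\sum_C |C|=\lambda(N)|A(s)|$, which depends on the graph only through $|A(s)|$, this conditioning does not bias the graph distribution. Marginalising the independent Poisson arrival processes of new edges between alive pairs, each non-edge pair independently acquires an edge with probability $1-\exp(-(s_{i+1}-s_i)/N)$, and combined with the inductive IRG structure at $s_i+$ this yields an IRG at $s_{i+1}-$ with updated kernel $\kappa(s_i)+(s_{i+1}-s_i)\mathbf{1}=\kappa(s_{i+1})$.

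The central and most delicate step is the inductive step across the freezing event at time $s$ itself, with frozen composition $q=N\pi^N(s-)-N\pi^N(s)$. Since the rate of freezing a specific component $C$ is $\lambda(N)|C|$ and every component of composition $q$ has the common size $\sum_i q_i$, the biasing from conditioning on composition $q$ (which favours graphs with more comp-$q$ components) cancels exactly the subsequent uniform choice of which such $C$ is frozen, so that the joint law of ``graph at $s-$, selected $C$'' is proportional to the unbiased IRG weight times the indicator that $C$ is a connected component of composition $q$. The key observation is then that this indicator is measurable with respect to the edges inside $C$ (for internal connectivity) and the edges between $C$ and $A(s-)\setminus C$ (all absent), both of which are independent of the edges inside $A(s-)\setminus C$ under the IRG distribution. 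Hence the latter edges retain their iid Bernoulli laws, and $\mathcal{G}^N(s)|_{A(s)}$ is IRG with kernel $\kappa(s)$ on $A(s)=A(s-)\setminus C$; the uniform marginals for $A(s)$ and $\tau|_{A(s)}$ follow from the $S_N$-symmetry of the initial type function and label-symmetric dynamics, preserved by the (label-symmetric) conditioning on $\pi^N$. The main obstacle is simply the careful bookkeeping of this conditional-independence and symmetry structure through the induction across multiple events.
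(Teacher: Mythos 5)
Your approach is correct but differs genuinely from the paper's. The paper does not do induction over freezing events. Instead it gives a one-shot independence argument: it realises the edge-arrival process as a Poisson point process $\mathcal{E}$ on $\binom{[P]}{2}\times[0,\infty)$ and the lightning process as an independent PPP $\mathcal{L}$ on $[P]\times[0,\infty)$, and observes that the event $\{\tau=\mathbf{t},\,\mathcal{A}(t)=A\}$ together with the trajectory $\pi^N|_{[0,t]}$ is measurable with respect to the initial types, the initial edges and $\mathcal{E}$ restricted to pairs \emph{not} both in $A$, and $\mathcal{L}$ restricted to $[0,t]$. These are independent of the initial edges and $\mathcal{E}$-atoms inside $\binom{A}{2}\times[0,t]$, so after conditioning, each pair inside $A$ is independently present with probability $1-e^{-(\kappa_{\tau(x),\tau(y)}+t)/N}$, which is exactly the IRG law with kernel $\kappa(t)$. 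The key non-trivial point hidden in that measurability claim is that, on the event $\mathcal{A}(t)\supseteq A$, every component that is ever frozen before $t$ is disjoint from $A$, and whether a struck vertex's component touches $A$ can be decided from the known edges alone (the first edge of any path into $A$ is an edge with at most one endpoint in $A$).

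By contrast you build the result inductively over freezing events, using (i) that the conditional law of the graph is unbiased by no-freezing in a gap because the total freezing rate $\lambda(N)|\mathcal{A}(s)|$ is graph-free, (ii) a Bayesian cancellation of the size-biased selection against the uniform choice of which component of a given composition is frozen, and (iii) the factorisation of the IRG law across $C$, the $C$-to-rest boundary, and the remainder, to conclude $G|_{A(s)}$ stays IRG; the marginals of $(A(t),\tau)$ you recover from $S_N$-symmetry. All three mechanisms are sound (I checked (ii): the joint weight on $(G,C)$ after conditioning on composition $q$ really does reduce to $P(G)\,\mathds{1}\{C\text{ is a component with composition }q\}$, and the indicator is measurable w.r.t.\ edges inside and incident to $C$). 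What your route buys is a very concrete ``why'' for the stability of the IRG class; what it costs is the bookkeeping you flag at the end: you are repeatedly conditioning on the exact (Lebesgue-null) freezing times $s_{i+1}$, so each inductive step needs a disintegration argument, and the inductive hypothesis must be stated carefully as a regular conditional distribution given $(\mathcal{F}_{s_i},A(s_i),\tau)$. The paper's PPP-measurability argument avoids all of that in one stroke, which is why it is considerably shorter. If you were to write your version up fully, the main work would be exactly those disintegration steps; nothing else in your outline looks wrong.
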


\begin{proof}
We consider a filtration that is finer than the natural filtration of $(\pi^N(t))$, but coarser than the natural filtration of $(\mathcal{G}^N(t))$. Given the frozen percolation process $\mathcal{G}^{N,p,\kappa,\lambda}$, for each $t\ge 0$, we define the sigma-algebra $\hat{\mathcal F}^N(t)$ generated by $(\pi^N(s),s\in[0,t])$, \emph{and} the types of all vertices assigned at time 0, \emph{and} $\mathcal{A}(t)$, the set of alive vertices at time $t$. We claim that conditional on $\hat{\mathcal{F}}^N(t)$, $\hat{\mathcal G}^N(t)$ has the same distribution as $G^N\left(N\pi^N(t),\kappa+t\mathbf{1}\right)$ on the set of graphs with $k$ types, which implies the statement of the proposition.

For this proof, we set $P:=\sum p_i$, and let $\eta$ denote counting measure on any discrete set. In the definition of a frozen percolation process $\mathcal{G}^{N,p,\kappa,\lambda}$, we can consider the edge-arrival process to be a Poisson point process $\mathcal{E}$ on $\binom{[P]}{2}\times [0,\infty)$, with intensity $\eta\otimes \frac{1}{N}\cdot\mathrm{Leb}$ and, independently, the \emph{lightning process} to be a PPP $\mathcal{L}$ on $[P]\times [0,\infty)$, with intensity $\eta\otimes \lambda\cdot\mathrm{Leb}$, describing the vertex clocks which initiate freezing. Given $\mathcal{G}^{N,p,\kappa,\lambda}(0)$, we can recover the whole frozen percolation process $\mathcal{G}^{N,p,\kappa,\lambda}$ on $[0,\infty)$ using $\mathcal{E}$ and $\mathcal{L}$, ignoring atoms in $\mathcal{E}$ corresponding to edges which are already present, and atoms in both $\mathcal{E}$ and $\mathcal{L}$ that correspond to already-frozen vertices. In particular, the evolution of $\mathcal{G}^{N,p,\kappa,\lambda}$ on $[0,t]$ is independent of the restrictions of $\mathcal{E}$ and $\mathcal{L}$ to $\binom{[P]}{2}\times[t,\infty]$ and $[P]\times [t,\infty]$, respectively.

\par
Let $\tau$ be the (random) type function $[P]\rightarrow[k]$ of the initially-alive vertices. Let $\mathbf{t}$ be any function $[P]\rightarrow[k]$ satisfying $|\mathbf{t}^{-1}(i)|=p_i$ for all $i\in[k]$, and let $A\subset [P]$ (which are the conditions for $\tau$). Then the event $B=\{\tau=\mathbf{t}, \,\mathcal{A}(t)=A\}$ depends precisely on
\begin{enumerate}
\item the types in $\mathcal{G}^{N,p,\kappa,\lambda}(0)$;
\item the restriction of the edge set of $\mathcal{G}^{N,p,\kappa,\lambda}(0)$ to $\binom{[P]}{2}\backslash \binom{[A]}{2}$;
\item the restriction of $\mathcal{E}$ to $\left(\binom{[P]}{2}\backslash \binom{[A]}{2}\right)\times [0,t]$;
\item the restriction of $\mathcal{L}$ to $[P]\times [0,t]$.
\end{enumerate}
Furthermore, conditional on $\{\mathrm{type}=\mathbf{t}, \, \mathcal{A}(t)=A\}$, the restriction of $\pi^N$ to $[0,t]$ also depends only on these four structures. Therefore, $B$ is independent of the restriction of the edge set of $\mathcal{G}^{N,p,\kappa,\lambda}(0)$ to $\binom{[A]}{2}$, and the restriction of $\mathcal{E}$ to $\binom{[A]}{2}\times [0,\infty)$. Thus, conditional on $B$, $\pi^N$ is also independent of the restriction of the edge set of $\mathcal{G}^{N,p,\kappa,\lambda}(0)$ to $\binom{[A]}{2}$, and the restriction of $\mathcal{E}$ to $\binom{[A]}{2}\times [0,\infty)$.

\par
It follows that, conditional on $\hat{\mathcal{F}}^N(t)$, the vertex set of $\mathcal{G}^{N,p,\kappa,\lambda}(t)$ is $\mathcal{A}(t)$, and the types are given by the restriction of $\tau$ to $\mathcal{A}(t)$. Since $|\mathcal{A}(t)|=N||\pi^N(t)||_1$, the distribution of the latter is the uniform distribution among functions $f:\mathcal{A}(t)\rightarrow[k]$ satisfying $|f^{-1}|=N\pi^N_i(t)$ for all $i\in[k]$. With this conditioning, the presence of an edge between vertices $x,y\in\mathcal{A}(t)$ depends only on the presence of an edge between $x,y$ in $\mathcal{G}^{N,p,\kappa,\lambda}(0)$, and the restriction of $\mathcal{E}$ to $\{x,y\}\times[0,t]$, and so occurs with probability
$$1- \exp\left(-\frac{\kappa_{\tau(x),\tau(y)}}{N}\right)\cdot \exp(-t/N).$$
Furthermore, since conditional on $\tau$, edges in $\mathcal{G}^{N,p,\kappa,\lambda}(0)$ are independent, and the restrictions of $\mathcal{E}$ to different first arguments are independent, it follows that different edges between vertices in $\mathcal{A}(t)$ are independent also. Therefore, conditional on $\hat{\mathcal F}^N(t)$, after uniformly random relabelling of the vertices, $\mathcal{G}^{N,p,\kappa,\lambda}(t)$ has precisely the distribution of $G^N(N\pi^N(t),\kappa+t\mathbf{1})$ on the set of graphs on $[P]$ with $k$ types, as required.
\end{proof}
\begin{remark}
Defining the edge probabilities in $G^N(p,\kappa)$ to be $1-\exp(-\kappa_{i,j}/N)$, rather than $1\wedge \frac{\kappa_{i,j}}N$ makes this result considerably simpler.
\end{remark}

\section{Uniqueness of frozen percolation type flows}\label{uniquesection}

In this section, we prove the following proposition. 

\begin{prop}\label{uniqueprop}Consider kernel $\kappa\in\R_{\ge 0}^{k\times k}$ and $\pi(0)\in\Pi_{\le 1}$ satisfying one of the conditions in Theorem \ref{DEtheorem}. Suppose there are frozen percolation type flows $\pi,\nu$, both with initial kernel $\kappa$ started from subdistribution $\pi(0)$. Then $\pi=\nu$.
\end{prop}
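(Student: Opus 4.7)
The plan is to reduce the uniqueness of the $k$-dimensional object $\pi$ to uniqueness of the scalar total-mass process $\Phi_\pi(t) := ||\pi(t)||_1$, to pin $\Phi_\pi$ down via Smoluchowski's equations, and then to recover $\pi$ itself through an ODE uniqueness argument.

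To any candidate type flow $\pi$ I would associate a companion family of multitype Galton--Watson branching processes: for each $t\ge 0$ and $i\in[k]$, let $\mathcal{B}^t_i$ be the branching process rooted at a type-$i$ vertex whose offspring distribution is $\Po([\kappa(t)\circ\pi(t)]_{i,\cdot})$. Criticality \eqref{eq:critcondition} guarantees that $|\mathcal{B}^t_i|$ is almost surely finite, and motivated by the IRG/branching-process correspondence (\emph{cf.} the discussion after Proposition \ref{BJRL1basic}) I would set
\[
v^\pi_\ell(t) := \sum_{i=1}^k \pi_i(t)\,\Prob{|\mathcal{B}^t_i|=\ell},\qquad \ell\ge 1,
\]
which should represent the limiting density of alive vertices lying in a size-$\ell$ component of an IRG with kernel $\kappa(t)$ and type-profile $\pi(t)$. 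Two features are essential here: $\sum_\ell v^\pi_\ell(t) = ||\pi(t)||_1 = \Phi_\pi(t)$, and the initial datum $v^\pi(0)$ depends only on $\kappa$ and $\pi(0)$, not on the chosen flow.

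The heart of the argument is then to verify that $v^\pi$ solves Smoluchowski's equations \eqref{eq:smol}. Differentiating the Poisson fixed-point equation for the generating function of $\mathcal{B}^t_i$ in time, and separating the contributions of $\dot{\kappa}=\mathbf{1}$ and $\dot{\pi}=-\phi\mu$, should recover the binary-coalescence gain term $\tfrac{\ell}{2}\sum_{m<\ell} v_m v_{\ell-m}$ from the former, and the freezing loss $-\ell v_\ell \Phi_\pi$ from the latter. The specific eigenvector direction $\mu(\kappa(t)\circ\pi(t))$ in \eqref{eq:flowDE} plays a dual role: it is the only drift direction compatible with preserving $\rho=1$ under the diagonal kernel perturbation $\kappa(t)=\kappa+t\mathbf{1}$, and it is simultaneously the asymptotic empirical type profile of vertices contained in large critical components of the associated IRG --- exactly the components being frozen. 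Uniqueness of Smoluchowski solutions (Theorem \ref{smolunique}) then forces $v^\pi$, hence $\Phi_\pi$, hence $\phi(t)=-\tfrac{d}{dt}\Phi_\pi(t)$ on $(t_c,\infty)$, to be independent of the flow.

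Finally, the critical time $t_c$ itself is common to any two flows, since $t\mapsto \rho(\kappa(t)\circ\pi(0))$ is continuous and strictly increasing (its entries dominate $t$), so $\rho(\kappa(t_c)\circ\pi(0))=1$ determines $t_c$ uniquely. With $t_c$ and $\phi$ common, both $\pi$ and $\nu$ satisfy the Cauchy problem $\dot{\pi}(t)=-\phi(t)\,\mu(\kappa(t)\circ\pi(t))$ with $\pi(t_c)=\pi(0)$. Under the hypotheses of Theorem \ref{DEtheorem}, $\kappa(t)\circ\pi(t)$ is strictly positive on any compact subinterval where $\pi$ remains in the open positive cone, so $A\mapsto\mu(A)$ is real-analytic (hence locally Lipschitz) there by Perron--Frobenius theory, and a Picard--Lindel\"of / Gronwall argument gives $\pi\equiv\nu$. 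The main obstacle I anticipate is the middle step: disentangling the joint time-dependence of $\kappa$ and $\pi$ inside the branching-process generating function, and confirming that the very particular combination imposed by \eqref{eq:flowDE} is exactly what reproduces the Smoluchowski gain--loss structure, is the genuine technical heart of the argument and where the non-probabilistic spectral results on positive matrices foreshadowed in Section \ref{technicalproofs} are likely to be needed.
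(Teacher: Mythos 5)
Your plan matches the paper's proof essentially step for step: associate to each flow a Smoluchowski solution via the multitype Poisson branching process $\Xi^{\pi(t),\kappa(t)}$ (your $\sum_i \pi_i(t)\Prob{|\mathcal{B}^t_i|=\ell}$ is identical to the paper's $v_\ell(t)=\Prob{|\Xi^{\pi(t),\kappa(t)}|=\ell}$), invoke Theorem~\ref{smolunique} to pin down $\Phi$ and hence $\phi$ and $t_c$, then conclude $\pi\equiv\nu$ by a Gronwall argument using local Lipschitz continuity of $A\mapsto\mu(A)$. The only small piece you leave conditional --- that $\pi(t)$ stays in the open positive cone so the Lipschitz estimate applies --- is exactly the content of the paper's Lemma~\ref{flowsstaypositive}, and the paper verifies \eqref{eq:smol} by explicit differentiation of the labelled-tree sum for $\Prob{|\Xi^{\pi,\kappa}|=\ell}$ rather than via generating functions, but these are implementation details of the same strategy.
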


The proof proceeds by constructing a solution to the Smoluchowski equations \eqref{eq:smol} from a frozen percolation type flow. We will use Theorem \ref{smolunique} to conclude that these are the same for both $\pi$ and $\nu$, and in particular, the associated $\Phi$s are the same, from which $\pi=\nu$ will follow using \eqref{eq:flowDE}.

\subsection{Bounding $\pi$ away from zero}
In the following lemma, we show that every component of $\pi(t)$ stays positive for all $t\ge 0$. This natural condition avoids the requirement for an awkward case distinction in the main argument of this section.

\begin{lemma}\label{flowsstaypositive}Any frozen percolation type flow $(\pi(t))_{t\ge 0}$, with initial kernel $\kappa\in\R_{\ge 0}^{k\times k}$ and positive initial subdistribution $\pi(0)\in \Pi_{\le 1}$, is positive for all times $t\ge 0$.

\begin{proof}
We write $\mu(t)$ as an abbreviation for $\mu(\kappa(t)\circ \pi(t))$ and $\kappa_{\max}:= \max_{i,j\in[k]}\kappa_{i,j}$. The result is clear for $t\le t_c$. Now suppose that
$$T:= \inf\{t> t_c \,:\, \exists i\in[k], \pi_i(t)=0\}<\infty.$$
Observe that $T>t_c$ since $\pi(t_c)=\pi(0)$. Then consider any $t\in[t_c,T)$. Since $\rho(\kappa(t)\circ \pi(t))=1$ and $\kappa(t)\circ \pi(t)$ is positive, the eigenvector $\mu(t)$ is well-defined, and satisfies
$$\mu_i(t) = \pi_i(t) \sum_{j=1}^k \mu_j(t) \cdot (\kappa_{j,i}+t).$$ %CHECK ORDER OF i,j HERE
So, since $\pi_i\le 1$ and $\sum \mu_j=1$, we have
$$\mu_i(t)\le \pi_i(t)\left[\kappa_{\max}+t\right],$$
So from \eqref{eq:flowDE}
$$\frac{\mathrm{d}}{\mathrm{d}t}\pi(t)\ge -\pi(t)\cdot \phi(t)[\kappa_{\max}+t].$$
Thus
\begin{align*}
\pi(t)&\ge \pi(t_c)\exp\left(-\int_{t_c}^t [\kappa_{\max}+s]\phi(s)\mathrm{d}s\right)\\
&\ge \pi(0) \exp\left(-[\kappa_{\max}+t]\right),
\end{align*}
since $\int_{t_c}^t\phi(s)\mathrm{d}s=\Phi(t_c)-\Phi(t)\le 1$. This holds for all $t\in[t_c,T)$, and thus
$$\pi(T)\ge \pi(0)\exp\left(-[\kappa_{\max}+T]\right),$$
since $\pi$ is continuous (because $\pi$ is a frozen percolation type flow).
\end{proof}
\end{lemma}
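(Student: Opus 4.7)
The plan is to show that each coordinate $\pi_i(t)$ decays at most exponentially fast, and therefore cannot reach zero in finite time. The interval $t \le t_c$ is immediate since $\pi(t) = \pi(0)$ there, so the real work is on $(t_c, \infty)$, where the ODE \eqref{eq:flowDE} is in force. I would argue by contradiction: let $T := \inf\{t > t_c : \pi_i(t) = 0 \text{ for some } i\}$, assume $T < \infty$, and then derive a strictly positive lower bound on $\pi_i(t)$ valid on $[t_c,T)$; continuity of $\pi$ then pushes this bound to $t = T$ and contradicts the definition of $T$.

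The key input is the left-eigenvector equation. On $[t_c,T)$, we have $\pi(t) > 0$ by minimality of $T$, and $\kappa(t) = \kappa + t\mathbf{1}$ is itself strictly positive as soon as $t > 0$, so Perron--Frobenius applies and $\mu(t) := \mu(\kappa(t)\circ\pi(t))$ is well-defined with eigenvalue $\rho(\kappa(t)\circ\pi(t))=1$ by \eqref{eq:critcondition}. Writing out the $i$th component of $\mu(t)^\top(\kappa(t)\circ\pi(t)) = \mu(t)^\top$ gives
\[
\mu_i(t) = \pi_i(t)\sum_{j\in[k]}\mu_j(t)\bigl(\kappa_{j,i}+t\bigr),
\]
and since $\mu(t)$ is an $\ell^1$-normalised probability vector, this yields the linear upper bound $\mu_i(t) \le (\kappa_{\max}+t)\,\pi_i(t)$, where $\kappa_{\max}$ is the largest entry of $\kappa$.

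Plugging this into \eqref{eq:flowDE} converts the ODE into the componentwise differential inequality $\frac{d}{dt}\pi_i(t) \ge -(\kappa_{\max}+t)\phi(t)\pi_i(t)$, which by Gr\"onwall gives
\[
\pi_i(t) \ge \pi_i(t_c)\exp\!\left(-\int_{t_c}^t (\kappa_{\max}+s)\phi(s)\,\mathrm ds\right).
\]
The final observation is that the freezing rate integrates in a harmless way: since $\Phi$ is non-increasing and $\Phi(s) = \Phi(t_c) - \int_{t_c}^s \phi(u)\mathrm du$, we have $\int_{t_c}^t\phi(s)\mathrm ds = \Phi(t_c)-\Phi(t) \le \Phi(t_c) \le 1$. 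This makes the exponent in the Gr\"onwall bound uniformly bounded on $[t_c, T]$ by something like $(\kappa_{\max}+T)$, producing a strictly positive lower bound on $\pi_i(T)$, which is the desired contradiction.

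The main conceptual obstacle is ensuring that $\mu(t)$ is unambiguously defined and continuous on the interval where we run the estimate; this relies crucially on $\kappa(t)\circ\pi(t)$ being strictly positive, hence irreducible, so that Perron--Frobenius gives a one-dimensional principal eigenspace with positive eigenvector. Positivity of $\pi(t)$ on $[t_c, T)$ is the induction hypothesis, and strict positivity of $\kappa(t)$ for $t > 0$ takes care of the rest. No delicate analysis of $\phi$ is needed beyond its non-negativity and its integral being controlled by the total loss of mass, which is why the argument is short once the eigenvector identity is in hand.
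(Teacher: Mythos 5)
Your proposal reproduces the paper's proof essentially verbatim: same contradiction setup with $T := \inf\{t > t_c : \pi_i(t) = 0\}$, same use of the left-eigenvector identity $\mu_i(t) = \pi_i(t)\sum_j \mu_j(t)(\kappa_{j,i}+t)$ to get $\mu_i(t) \le (\kappa_{\max}+t)\pi_i(t)$, the same Gr\"onwall-type integration of the resulting differential inequality, and the same observation that $\int_{t_c}^t \phi(s)\,\mathrm{d}s \le 1$. The only slight addition is your explicit remark that $\kappa(t)$ is strictly positive for $t>0$ to justify Perron--Frobenius, which the paper leaves implicit.
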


\subsection{$k$-type branching processes}\label{ktypeBPs}

Given a frozen percolation type flow, we construct a solution to the Smoluchowski equations. The hydrodynamic limit proposed by Theorem \ref{weaklimitthm} is our motivation. For a sequence of $k$-type FP processes $(\mathcal{G}^{N,p^N,\kappa})$ which approximate a type flow $\pi$, from Proposition \ref{GNtdistprop} we might conjecture that $\mathcal{G}^{N,p^N,\kappa}(t)$ and $G(N\pi(t),\kappa(t))$ have the same local limit as $N\rightarrow\infty$. We will shortly describe the local limit of IRGs in general, and use this to construct $(v_\ell(\cdot),\ell\ge 1)$ associated to a type flow $\pi(\cdot)$.

\begin{definition}\label{defnxi}
Given $\kappa\in \R^{k\times k}_{\ge 0}$ and $\pi\in\Pi_{\le 1}$, we define $\Xi^{\pi,\kappa}$ to be a multitype Galton-Watson branching tree where the vertices have $k$ types, as follows:
\begin{itemize}
\item With probability $1-\sum_{i=1}^k \pi_i$, set $\Xi^{\pi,\kappa}=\varnothing$, the empty tree.
\item For each $i$, with probability $\pi_i$, declare the root of $\Xi^{\pi,\kappa}$ to have type $i$.
\item For each $r\ge 0$, we generate the tree at generation $r+1$ recursively from the vertices at generation $r$. Each vertex with type $i$ has a Po($\kappa_{i,j}\pi_j$) number of type $j$ offspring, where these counts are independent across types $j$ and choice of parent, and the history of the process.
\end{itemize}
\end{definition}

The following result about the survival of $\Xi^{\pi,\kappa}$ is discussed by Mode \cite{Modebook}, and proved for more general type-spaces in \cite{BJRinhomog}.
\begin{prop}[\cite{BJRinhomog}, Theorem 6.1]\label{Xisurvivallemma}
We have $\Prob{|\Xi^{\pi,\kappa}|=\infty}>0$ iff $\rho(\kappa\circ\pi)>1$.
\end{prop}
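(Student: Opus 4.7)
This is the classical Perron--Frobenius dichotomy for survival of a multitype Galton--Watson tree, and I would derive a fixed-point equation for the survival probabilities and then analyse it through the spectral properties of the mean offspring matrix $M:=\kappa\circ\pi$, which by construction has entries $M_{i,j}=\kappa_{i,j}\pi_j$, the mean number of type-$j$ offspring of a type-$i$ vertex in $\Xi^{\pi,\kappa}$. Writing $u_i$ for the survival probability of $\Xi^{\pi,\kappa}$ conditional on the root having type $i$, we have $\Prob{|\Xi^{\pi,\kappa}|=\infty}=\sum_{i\in[k]}\pi_i u_i$, so the task reduces to showing $u\ne\mathbf{0}$ if and only if $\rho(M)>1$.

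The first step is to obtain the fixed-point equation. Conditioning on the offspring of the root, the subtrees hanging off its children are independent copies of the type-indexed process; combined with the Poisson pgf $\E{s^X}=\exp(-\lambda(1-s))$ for $X\sim\Po(\lambda)$, the extinction probabilities $q_i=1-u_i$ satisfy $q_i=\exp\bigl(-\sum_j M_{i,j}(1-q_j)\bigr)$. Equivalently,
\begin{equation*}
u_i \;=\; 1-\exp\!\bigl(-(Mu)_i\bigr),\qquad i\in[k],
\end{equation*}
with $u$ equal to the largest solution of this system in $[0,1]^k$.

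For the direction $\rho(M)\le 1\Rightarrow u=\mathbf{0}$, use $1-e^{-x}\le x$, with equality iff $x=0$, to obtain $u\le Mu$ componentwise, strictly in at least one coordinate whenever $u\ne\mathbf{0}$. When $M$ is irreducible I would pair with a strictly positive Perron--Frobenius left eigenvector $v^\top M=\rho(M)v^\top$ to get
\begin{equation*}
v^\top u \;\le\; v^\top Mu \;=\; \rho(M)\,v^\top u \;\le\; v^\top u,
\end{equation*}
forcing the componentwise inequality to be an equality, and hence $u=\mathbf{0}$. For the reverse direction $\rho(M)>1\Rightarrow u\ne\mathbf{0}$, I would pick a strictly positive right eigenvector $Mw=\rho(M)w$ and note that, by Taylor expansion around $0$, the map $F(u):=\mathbf{1}-\exp(-Mu)$ satisfies $F(\varepsilon w)\ge\varepsilon w$ componentwise for all sufficiently small $\varepsilon>0$ (using $\rho(M)>1$). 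Since $F$ is componentwise monotone, iterating $F$ from $\varepsilon w$ produces an increasing sequence bounded by $\mathbf{1}$, whose limit is a fixed point dominating $\varepsilon w>\mathbf{0}$, and hence a witness that $u\ne\mathbf{0}$.

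The main obstacle is the handling of reducibility of $M$, since in general the Perron eigenvectors used above need not be strictly positive. The plan here is to decompose $M$ into blocks indexed by the strongly connected components of its associated digraph and apply the irreducible argument to each block: extinction propagates into a type $i$ exactly when $i$ cannot reach any irreducible block whose restriction of $M$ has spectral radius exceeding $1$, while survival propagates through any path in the digraph leading to such a block. As this reducibility bookkeeping is standard and purely matrix-theoretic, one may alternatively invoke \cite[Theorem 6.1]{BJRinhomog} verbatim.
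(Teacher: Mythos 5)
The paper does not prove this proposition: it is stated with the citation \cite[Theorem 6.1]{BJRinhomog} and no argument is given, so there is no proof in the paper to compare against. Your proposal supplies the classical argument from scratch, and it is essentially correct. The fixed-point equation $u_i = 1 - \exp(-(Mu)_i)$ for the conditional survival probabilities, together with the characterisation of $u$ as the maximal solution in $[0,1]^k$, is the right starting point (and agrees with the formulation the paper itself uses later, at \eqref{eq:fixedpointzeta} and in Lemma \ref{O1boundzeta}). The $\rho\le 1$ direction via $1-e^{-x}\le x$ and pairing with a strictly positive left Perron eigenvector is sound in the irreducible case: if $u\ne\mathbf 0$ then irreducibility forces $(Mu)_i>0$ for some $i$, hence a strict inequality, and the chain $v^\top u \le v^\top Mu = \rho v^\top u \le v^\top u$ yields the contradiction. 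The $\rho>1$ direction by seeding the monotone iteration with $\varepsilon w$ for a strictly positive right eigenvector $w$ is the standard sub/super-solution argument and is correct, since $F$ maps $[0,1]^k$ into itself and the monotone limit is a fixed point dominating $\varepsilon w$, hence bounded above by the maximal fixed point $u$. Your final paragraph correctly flags the only genuine gap, namely the reducible case, where the Perron eigenvectors need not be strictly positive; the block decomposition you sketch is indeed the standard fix, though a careful write-up would be longer than the irreducible core. Since the paper merely cites BJR, deferring to the citation for that bookkeeping is entirely consistent with what the paper itself does. In short: the paper imports the result, you reprove it; your proof is correct modulo routine reducibility details that you explicitly acknowledge.
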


As for IRGs, we say that $\Xi^{\pi,\kappa}$ is \emph{subcritical}, \emph{critical}, and \emph{supercritical} when $\rho=\rho(\kappa\circ\pi)$ satisfies $\rho<1$, $\rho=1$, and $\rho>1$, respectively.

It holds that $\Xi^{\pi,\kappa}$ are weak local limits in the sense of Benjamini and Schramm \cite{BS01} for IRGs (see \cite{vdHRGCN} for details) but the following weaker statement will suffice for us.

\begin{prop}[\cite{BJRinhomog}, Theorem 9.1]\label{BJRweaklimit}
For any graph $G$, let $\mathcal{N}_\ell(G)$ be the number of vertices of $G$ which lie in a component of size exactly $\ell$. Given a kernel $\kappa$ and $\pi\in\Pi_{\le 1}$ and any sequence $p^N\in \N_0^k$ such that $p^N/N\rightarrow\pi$,
\begin{equation}\label{eq:BJRloclim}\frac{1}{N}\mathcal{N}_\ell\left(G^N(p^N,\kappa)\right)\quad \stackrel{\mathbb{P}}\rightarrow\quad \Prob{|\Xi^{\pi,\kappa}|=\ell}.\end{equation}
\end{prop}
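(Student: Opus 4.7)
The proposition is actually quoted from Bollobás--Janson--Riordan, so I would reproduce the standard argument. The strategy has three parts: reduce the ratio $\frac{1}{N}\mathcal{N}_\ell(G^N(p^N,\kappa))$ to a probability via a uniformly chosen vertex, couple the local exploration of that vertex's component to $\Xi^{\pi,\kappa}$, and then use a second-moment argument to upgrade convergence in expectation to convergence in probability.

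The plan is, first, to note that if $V$ is chosen uniformly at random from $[\sum_i p_i^N]$, then
\begin{equation*}
\mathbb{E}\!\left[\frac{1}{N}\mathcal{N}_\ell(G^N(p^N,\kappa))\right] = \frac{\sum_i p_i^N}{N}\,\mathbb{P}(|\mathcal{C}(V)|=\ell),
\end{equation*}
where $\mathcal{C}(V)$ is the component of $V$. The prefactor tends to $\sum_i \pi_i \le 1$, and conditional on $V$ having type $i$ (which happens with probability $p_i^N/\sum_j p_j^N \to \pi_i/\sum_j \pi_j$), we need to show $\mathbb{P}(|\mathcal{C}(V)|=\ell \mid \tau(V)=i)\to \mathbb{P}(|\Xi^{\pi,\kappa}|=\ell\mid\text{root has type }i)$. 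Combining these two convergences recovers the unconditional statement via the definition of $\Xi^{\pi,\kappa}$, where the root has type $i$ with probability $\pi_i$ and the tree is empty with probability $1-\sum_i\pi_i$.

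Second, I would couple the BFS exploration from $V$ with the branching process $\Xi^{\pi,\kappa}$. At any given stage of BFS, the set of explored vertices has size bounded by $\ell$, so when exposing the neighbours of a type-$i$ vertex, the number of previously-unexplored type-$j$ candidates is $p_j^N - O(1)$ and each is connected independently with probability $1-\exp(-\kappa_{i,j}/N)$. Thus the number of type-$j$ children of this vertex is distributed as $\mathrm{Bin}(p_j^N - O(1),\, 1-e^{-\kappa_{i,j}/N})$, which converges in total variation to $\mathrm{Po}(\kappa_{i,j}\pi_j)$ as $N\to\infty$. Since we only need to expose at most $\ell$ generations (indeed at most $\ell$ vertices) to determine whether $|\mathcal{C}(V)|=\ell$, a union bound over these finitely many binomial/Poisson comparisons gives $\mathbb{P}(|\mathcal{C}(V)|=\ell\mid \tau(V)=i)\to \mathbb{P}(|\Xi^{\pi,\kappa}|=\ell\mid\text{root has type }i)$.

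Third, to promote this convergence in expectation to convergence in probability, compute the second moment by choosing two independent uniform vertices $V,W$:
\begin{equation*}
\mathbb{E}\!\left[\left(\frac{\mathcal{N}_\ell(G^N(p^N,\kappa))}{N}\right)^{\!2}\right] = \frac{(\sum_i p_i^N)^2}{N^2}\,\mathbb{P}\bigl(|\mathcal{C}(V)|=|\mathcal{C}(W)|=\ell\bigr).
\end{equation*}
Performing BFS from both $V$ and $W$ simultaneously, the event that the two explorations meet within $\ell$ steps has probability $O(1/N)$, because each cross-edge exists with probability $O(1/N)$ and only $O(\ell^2)$ such edges are queried. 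Outside of this $o(1)$ event, the two explorations use disjoint vertex sets, and the BFSs are asymptotically independent copies of the same coupled branching process. Hence the second moment converges to $(\sum_i\pi_i)^2\,\mathbb{P}(|\Xi^{\pi,\kappa}|=\ell)^2$, matching the square of the mean, which by Chebyshev yields convergence in probability.

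The main technical obstacle is controlling the second moment, specifically bounding the probability that the two explorations from $V$ and $W$ interact. For fixed $\ell$ this is routine, since the number of exposed vertex pairs is $O(1)$ and each pair generates a cross-edge with probability $O(1/N)$; but care is needed to ensure that the error terms from successive binomial-to-Poisson approximations (inside each single exploration) do not conflict with the joint independence estimate. Once this is cleaned up, the conclusion is immediate from Chebyshev's inequality.
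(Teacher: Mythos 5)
The paper does not prove this proposition itself; it cites it as Theorem~9.1 of Bollob\'as--Janson--Riordan \cite{BJRinhomog}, so there is no ``paper's own proof'' to compare against. Your sketch is a correct account of the standard argument in that reference: a first-moment reduction to a uniformly chosen root, a local BFS coupling to the multitype Poisson branching process $\Xi^{\pi,\kappa}$ via binomial-to-Poisson approximation, and a second-moment (Chebyshev) upgrade using near-independence of explorations from two independent roots, with the cross-interaction probability of order $O(1/N)$. Your outline is sound and self-consistent; I would only add that, in the second-moment computation, your expression implicitly allows $V=W$ (i.i.d.\ uniform), which contributes an $O(1/N)$ term and is therefore harmless, and that the binomial-to-Poisson total variation bounds are uniform over the $O(1)$ vertices exposed in the BFS so the union bound does indeed close.
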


In Section \ref{vfrompisection}, we will define $v_\ell(t):= \Prob{|\Xi^{\pi(t),\kappa(t)}|=\ell}$, and show that this satisfies the Smoluchowski equations \eqref{eq:smol}, when $\pi$ is a FP type flow. First, we explain how to treat $\Prob{|\Xi^{\pi,\kappa}|=\ell}$ as a sum over trees. For use in the rest of this section, for any finite set $A$ we define $T_A$ to be the set of \emph{unrooted}, unordered trees, labelled by $A$, and we define $T^\rho_A$ to be the set of \emph{rooted}, unordered trees, again labelled by $A$.

\begin{lemma}\label{intoverltrees}
Let $\kappa\in\R^{k\times k}_{\ge 0}$ and $\pi\in\Pi_{\le 1}$. Then
\begin{equation}\label{eq:ProbXi=ell}\Prob{|\Xi^{\pi,\kappa}|=\ell} = \frac{1}{\ell !} \sum_{T\in T^\rho_{[\ell]}} \sum\limits_{\substack{i_1,\ldots,i_\ell\\ \in[k]}} \left[ \prod_{(m,n)\in E(T)}\kappa_{i_m,i_n} \right] \prod_{m=1}^\ell \pi_{i_m} \exp\left(-\sum_{j=1}^k \kappa_{i_m,j}\pi_j\right).\end{equation}
\begin{proof}
As motivation for some of the expressions to follow, note that $\sum \kappa_{i,j}\pi_j$ is the expected number of offspring (of all types) of a type $i$ parent in $\Xi^{\pi,\kappa}$.

To simplify the argument, we will use a slightly different construction of an inhomogeneous random graph with index $N$, where the set of vertices is also random, corresponding to the type subdistribution $\pi$. More formally, we define a random variable
\begin{equation}\label{eq:defnXforGtilde}X_1=\begin{cases}i&\quad \text{with probability}\quad\pi_i,\quad i\in[k]\\ 0&\quad \text{with probability}\quad 1-\Phi= 1-\sum_{i=1}^k \pi_i,\end{cases}\end{equation}
and let $X_2,\ldots,X_N$ be IID copies of $X_1$. We then construct a random graph $\tilde G^N(\pi,\kappa)$ conditional on $(X_1,\ldots,X_N)$ as follows. The vertex set is $M:=\{m\in[N]\, :\, X_m\ne 0\}$, and the type of any $i$ in the vertex set is $X_i$. Then, (as in the original definition of $G^N(p,\kappa)$) each edge $\{i,j\}\in M^{(2)}$ is present with probability $1-\exp\left(-\kappa_{X_i,X_j}/N\right)$, independently of all other pairs.

\par
Shortly, we will consider the quantities
\begin{equation}\label{eq:defnbarpNell}\bar p_i^{N,\ell}:= \#\Big\{ m\in\{\ell+1,N\}\,:\, X_m=i \Big\},\quad i\in[k], \quad 0\le \ell\le N-1,\end{equation}
associated with a realisation of $\tilde G^N(\pi,\kappa)$. We will consider local limits in $\tilde G^N(\pi,\kappa)$. In this setting, we say that $|C(1)|$, the size of the component containing 1, is zero if $X_1=0$, that is if $1$ is not in the vertex set of $\tilde G^N(\pi,\kappa)$.

For any $\ell\le N$, we have
\begin{align}\Prob{|C(1)|=\ell \text{ in }\tilde G^N(\pi,\kappa)} &= \binom{N-1}{\ell-1} \Prob{C(1)=[\ell]\text{ in }\tilde G^N(\pi,\kappa)}\nonumber\\
&=\binom{N-1}{\ell-1} \sum_{T\in T_{[\ell]}}\sum_{\substack{i_1,\ldots,i_\ell\\ \in[k]}} \prod_{m\in[\ell]} \pi_{i_m}\prod_{(m,n)\in E(T)}\left(1-\exp(-\kappa_{i_m,i_n}/N)\right)\nonumber\\
&\quad \prod_{\substack{(m,n)\in[\ell]^{(2)}\\ (m,n)\not\in E(T)}} \exp(-\kappa_{i_m,i_n}/N)\;\E{ \prod_{m=1}^\ell \prod_{j=1}^k \exp\left(-\frac{\kappa_{i_m,j}\bar p_j^{N,\ell}}{N}\right)}\nonumber\\
&\qquad + \Prob{|C(1)|=\ell\text{ and }C(1)\text{ includes a cycle in }\tilde G^N(\pi,\kappa)}.\label{eq:C=linGtilde}
\end{align}
In the first two lines, these products govern, respectively, the probabilities that the vertices in $[\ell]$ are present and have types $(i_1,\ldots,i_\ell)$; that the correct edges are present within $[\ell]$; that the correct non-edges are present within $[\ell]$; and the expectation (over random variables $(\bar p_j^{N,\ell})_{j\in[k]}$) gives the probability there are no edges between $[\ell]$ and $[N]\backslash[\ell]$, given the types of vertices $[\ell]$.

\par
The following convergence results hold immediately for all $i_1,\ldots,i_\ell\in[k]$:
\begin{align}
\lim_{N\rightarrow\infty}\prod_{\substack{(m,n)\in[\ell]^{(2)}\\ (m,n)\not\in E(T)}} \exp(-\kappa_{i_m,i_n}/N)&= 1,\label{eq:limforedges}\\
\lim_{N\rightarrow\infty}\binom{N-1}{\ell-1} \prod_{(m,n)\in E(T)} \left(1-\exp(-\kappa_{i_m,i_n}/N)\right) &= \frac{1}{(\ell-1)!}\prod_{(m,n)\in E(T)} \kappa_{i_m,i_n}.\label{eq:limfornonedges}
\end{align}

Now to treat the expectation term in \eqref{eq:C=linGtilde}, we rewrite $\bar p_j^{N,\ell}$ as $\sum_{j=\ell+1}^N \mathds{1}_{\{ X_n=j\}}$, and recall that $(X_n)$ as defined at \eqref{eq:defnXforGtilde} are IID.
\begin{align*}
\E{ \prod_{m=1}^\ell \prod_{j=1}^k \exp\left(-\frac{\kappa_{i_m,j}\bar p_j^{N,\ell}}{N}\right) }&=\E{ \prod_{m=1}^\ell \prod_{j=1}^k \prod_{n=\ell+1}^N\exp\left(-\frac{\kappa_{i_m,j}\mathds{1}_{\{ X_n=j \}}}{N}\right) }\\
&= \prod_{n=\ell+1}^N\E{ \prod_{j=1}^k \prod_{m=1}^\ell \exp\left(-\frac{\kappa_{i_m,j}\mathds{1}_{\{ X_n=j \}}}{N}\right) }\\
&= \prod_{n=\ell+1}^N \left[1- \Phi + \sum_{j=1}^k \pi_j \prod_{m=1}^\ell \exp\left(-\frac{\kappa_{i_m,j}}{N}\right) \right]\\
&= \left[1- \Phi + \sum_{j=1}^k \pi_j\left( 1-\frac{\sum_{m=1}^\ell \kappa_{i_m,j}}{N} +O(N^{-2}) \right)\right]^{N-\ell}\\
&= \left[ 1 - \frac{\sum_{m=1}^\ell \sum_{j=1}^k\kappa_{i_m,j}\pi_j}{N} + O\left(N^{-2}\right)\right]^{N-\ell}.\\
\end{align*}
And since $\Phi=\sum \pi_j$, recalling that $\ell$ is fixed, we obtain
\begin{equation}\label{eq:limEforGtilde}\lim_{N\rightarrow\infty}\E{ \prod_{m=1}^\ell \prod_{j=1}^k \exp\left(-\frac{\kappa_{i_m,j}\bar p_j^{N,\ell}}{N}\right) }=\prod_{m=1}^\ell \exp\left( -\sum_{j=1}^k \kappa_{i_m,j}\pi_j \right).
\end{equation}

Finally, we treat the extra term in \eqref{eq:C=linGtilde}, namely the probability that $C(1)$ includes a cycle, via a standard calculation for showing that local limits are trees. If $C(1)$ includes a cycle and $|C(1)|=\ell$, then it includes at least $\ell$ edges. So we bound this probability as
$$\Prob{|C(1)|=\ell\text{ and }C(1)\text{ includes a cycle in }\tilde G^N(\pi,\kappa)}\le \tbinom{N-1}{\ell -1}\sum_{E=\ell}^{\binom{\ell}{2}} \tbinom{\binom{\ell}{2}}{E} \left(1-\exp\left(-\tfrac{\kappa_{\max}}{N}\right)\right)^E.$$
Recall again that $\ell$ is fixed, so $\binom{N-1}{\ell-1}=\Theta(N^{\ell-1})$. Each summand has magnitude $\Theta(N^{-E})$, so
\begin{equation}
\lim_{N\rightarrow\infty}\Prob{|C(1)|=\ell\text{ and }C(1)\text{ includes a cycle in }\tilde G^N(\pi,\kappa)}=0.\label{eq:nocyclesinlimit}
\end{equation}

Combining \eqref{eq:nocyclesinlimit} with \eqref{eq:limforedges}, \eqref{eq:limfornonedges}, and \eqref{eq:limEforGtilde},
\begin{align}
\lim_{N\rightarrow\infty} \Prob{|C(1)|=\ell \text{ in }\tilde G^N(\pi,\kappa)} &= \frac{1}{(\ell-1)!}\sum_{T\in T_{[\ell]}} \sum_{\substack{i_1,\ldots,i_\ell\\ \in[k]}}\left[ \prod_{(m,n)\in E(T)} \kappa_{i_m,i_n} \right]\nonumber\\
&\qquad \prod_{m\in[\ell]} \pi_{i_m}\exp\left(-\sum_{j=1}^k \kappa_{i_m,j}\pi_j \right).\nonumber\\
\lim_{N\rightarrow\infty} \Prob{|C(1)|=\ell \text{ in }\tilde G^N(\pi,\kappa)}&=\frac{1}{\ell !}\sum_{T\in T^\rho_{[\ell]}} \sum_{\substack{i_1,\ldots,i_\ell\\ \in[k]}}\left[ \prod_{(m,n)\in E(T)} \kappa_{i_m,i_n} \right] \label{eq:limCinGtilde}\\
&\qquad \prod_{m\in[\ell]} \pi_{i_m}\exp\left(-\sum_{j=1}^k \kappa_{i_m,j}\pi_j \right),\nonumber
\end{align}
where the second equality holds by considering the natural $1$-to-$\ell$ mapping from $T_{[\ell]}$ to $T_{[\ell]}^\rho$, under which the summands are preserved.

\par
We now apply Proposition \ref{BJRweaklimit}. Although the statement of this result in \cite{BJRinhomog} specifically excludes random graphs on what the authors term \emph{generalised vertex spaces}, of which $\tilde G^N(\pi,\kappa)$ is an example, this is not a major problem. In $\tilde G^N(\pi,\kappa)$, consider the sequence $\bar p^{N,0}:=(\bar p_1^{N,0},\ldots \bar p_k ^{N,0})$ as defined in \eqref{eq:defnbarpNell}, which records the number of vertices of each type present in the graph. Conditional on $\bar p^{N,0}$, $\tilde G^N(\pi,\kappa)$ has the same distribution on the space of graphs with $k$ types, up to relabelling of the vertices, as $G^N(\bar p^{N,0},\kappa)$. However, $\bar p^{N,0}/N$ converges in probability to $\pi$ as $N\rightarrow\infty$. Therefore, we can lift \eqref{eq:BJRloclim} to obtain
\begin{equation}\label{eq:BJRloclimGtilde}\frac{1}{N}\mathcal{N}_\ell\left(\tilde G^N(\pi,\kappa)\right)\quad \stackrel{\mathbb{P}}\rightarrow\quad \Prob{|\Xi^{\pi,\kappa}|=\ell},\end{equation}
and since $\mathcal{N}_\ell (\tilde G^N(\pi,\kappa))/N\le 1$ almost surely, this convergence holds in expectation also. But the vertices and absent vertices of $\tilde G^N(\pi,\kappa)$ are exchangeable by construction (recall that some vertices in $[N]$ could be absent if $\pi$ is a strict subdistribution), and so
$$\E{\mathcal{N}_\ell\left(\tilde G^N(\pi,\kappa)\right)} = N \Prob{|C(1)|=\ell \text{ in }\tilde G^N(\pi,\kappa)}.$$
From this, we obtain
\begin{equation}\label{eq:limEC1Gtilde}\lim_{N\rightarrow\infty}\Prob{|C(1)|=\ell \text{ in }\tilde G^N(\pi,\kappa)} = \Prob{|\Xi^{\pi,\kappa}|=\ell}.\end{equation}
Then, by combining \eqref{eq:C=linGtilde} and \eqref{eq:limEC1Gtilde}, the required result \eqref{eq:ProbXi=ell} follows immediately.
\end{proof}
\end{lemma}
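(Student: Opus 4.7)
The plan is to evaluate $\Prob{|\Xi^{\pi,\kappa}|=\ell}$ by identifying it as the limit of a component-size probability in a finite inhomogeneous random graph, and then computing this latter probability directly by enumerating tree structures. This exchanges a calculation on the branching process itself --- which is possible but requires careful bookkeeping of automorphisms of typed rooted trees --- for a concrete combinatorial enumeration in which the Poisson factor arises naturally from the $1-e^{-\kappa_{ij}/N}$ edge probabilities.

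First, I would introduce a convenient variant $\tilde G^N(\pi,\kappa)$ of the IRG in which each of $N$ potential vertices is independently either absent (probability $1-\sum_i\pi_i$) or assigned a type drawn from $\pi$; conditional on the type assignment, edges are included exactly as in Definition \ref{defnIRGs}. The vector of type counts concentrates on $N\pi$, so after conditioning on this count vector, Proposition \ref{BJRweaklimit} lifts to give $\tfrac{1}{N}\mathcal N_\ell(\tilde G^N(\pi,\kappa))\stackrel{\mathbb{P}}{\to}\Prob{|\Xi^{\pi,\kappa}|=\ell}$. Since this quantity is bounded by $1$, convergence holds in expectation also. Exchangeability of the $N$ potential vertices (with the convention that $|C(1)|=0$ when vertex $1$ is absent) then gives $\E{\mathcal N_\ell(\tilde G^N)}=N\Prob{|C(1)|=\ell\text{ in }\tilde G^N}$, reducing the problem to computing $\lim_{N\to\infty}\Prob{|C(1)|=\ell\text{ in }\tilde G^N}$.

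Second, I would evaluate this limit explicitly. Writing $\Prob{|C(1)|=\ell}=\binom{N-1}{\ell-1}\Prob{C(1)=[\ell]}$ by symmetry, the event $\{C(1)=[\ell]\}$ decomposes as a sum over spanning trees $T\in T_{[\ell]}$ and type assignments $(i_1,\ldots,i_\ell)$, with four probability factors: (i) the vertices of $[\ell]$ receive the specified types, (ii) the tree edges of $T$ are present, (iii) the non-tree edges inside $[\ell]^{(2)}$ are absent, (iv) no edges join $[\ell]$ to $[N]\setminus[\ell]$; plus a further contribution from configurations in which $[\ell]$ contains at least one cycle. As $N\to\infty$, the edge probabilities $1-e^{-\kappa_{ij}/N}\sim\kappa_{ij}/N$ combine with $\binom{N-1}{\ell-1}\sim N^{\ell-1}/(\ell-1)!$ to produce the factor $\tfrac{1}{(\ell-1)!}\prod_{(m,n)\in E(T)}\kappa_{i_m,i_n}$; the non-edge probabilities inside $[\ell]^{(2)}$ tend to $1$; and the cycle contribution is $O(1/N)$ because each extra edge carries a factor of order $1/N$ against only $O(N^{\ell-1})$ possible labellings. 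A final conversion from unrooted labelled trees $T_{[\ell]}$ to rooted labelled trees $T^{\rho}_{[\ell]}$ via the $\ell$-to-$1$ forgetful map produces the prefactor $\tfrac{1}{\ell!}$ in the claimed formula.

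The delicate step is evaluating the limit of factor (iv), the expected probability that no edges link $[\ell]$ to $[N]\setminus[\ell]$. Conditional on the types $i_1,\ldots,i_\ell$ assigned inside $[\ell]$, this equals $\E{\prod_{m\le\ell}\prod_{j\in[k]}\exp(-\kappa_{i_m,j}Y_j^N/N)}$, where $Y_j^N$ denotes the number of type-$j$ vertices among $\{\ell+1,\ldots,N\}$. Using the independence of the types across $\{\ell+1,\ldots,N\}$ to take the expectation inside the product over $n>\ell$, and then applying a first-order expansion of $\prod_{m\le\ell}\exp(-\kappa_{i_m,j}\I{X_n=j}/N)$ before raising to the $(N-\ell)$-th power, yields the limiting factor $\prod_{m\le\ell}\exp(-\sum_j \kappa_{i_m,j}\pi_j)$. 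Once this expectation calculation is secured, the remaining bookkeeping is routine.
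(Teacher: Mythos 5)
Your proposal matches the paper's proof essentially step for step: the same auxiliary model $\tilde G^N(\pi,\kappa)$ with i.i.d.\ random vertex presence and types, the same use of exchangeability plus Proposition~\ref{BJRweaklimit} to identify $\lim_N \Prob{|C(1)|=\ell}$ with $\Prob{|\Xi^{\pi,\kappa}|=\ell}$, the same four-factor tree decomposition of $\Prob{C(1)=[\ell]}$ with a separate cycle remainder, and the same treatment of the expectation over types outside $[\ell]$ by pushing it through the independent product over $n>\ell$. The only difference is presentational (you state the identification via Proposition~\ref{BJRweaklimit} before the combinatorial computation rather than after), which does not change the substance.
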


\subsection{Constructing solutions to Smoluchowski's equations from type flows}\label{vfrompisection}
Now we are in a position to show that $(v_\ell(t))$ constructed from $\Xi^{\pi(t),\kappa(t)}$ indeed satisfies the Smoluchowski equations.

\begin{prop}\label{flowissmolprop}Given $(\pi(t))_{t\ge 0}$ a frozen percolation type flow with initial kernel $\kappa$, set $v_\ell(t)= \Prob{|\Xi^{\pi(t),\kappa(t)}|=\ell}$ as before. Then $(v(t))_{t\ge 0}$ satisfies the Smoluchowski equations \eqref{eq:smol}, with $T_{\mathrm{gel}}=t_c$. Furthermore, we have
\begin{equation}\label{eq:barPhiPhi}\sum_{\ell=1}^\infty v_\ell(t) =\sum_{i=1}^k \pi_i(t),\end{equation}
and so it is consistent to call both of these quantities $\Phi(t)$.
\begin{proof}
We use the notation $\Xi^{(t)}$ as a shorthand for $\Xi^{\pi(t),\kappa(t)}$. We show \eqref{eq:barPhiPhi} first. We know that $\rho(\kappa(t)\circ\pi(t))\le 1$, so by Proposition \ref{Xisurvivallemma}, we have $\Prob{|\Xi^{(t)}|=\infty}=0$. Therefore
$$\sum_{\ell=1}^\infty v_\ell(t) = 1 - \Prob{\Xi^{(t)}=\varnothing} - \Prob{|\Xi^{(t)}| = \infty}=1 - \Prob{\Xi^{(t)}=\varnothing}= \sum_{i=1}^k \pi_i(t).$$

Now we consider the derivatives of $v_\ell(t)$. We write $\mu(t)$ as a shorthand for $\mu(\kappa(t)\circ\pi(t))$. First we observe that, for $t<t_c$,
$$\frac{\mathrm d}{\mathrm{d}t}\left[ \sum_{j=1}^k \kappa_{i,j}(t)\pi_j(t)\right]=1,\quad \forall i\in[k],$$
and for $t>t_c$,
\begin{align}
\frac{\mathrm d}{\mathrm{d}t}\left[ \sum_{j=1}^k \kappa_{i,j}(t)\pi_j(t)\right]&\stackrel{\eqref{eq:flowDE}}= \sum_{j=1}^k \pi_j(t) - \phi(t)\sum_{j=1}^k\kappa_{i,j}(t)\mu_j(t)\nonumber\\
&= \Phi(t) - \phi(t)\frac{\mu_i(t)}{\pi_i(t)},\quad i\in[k],\label{eq:doffspringdt}
\end{align}
from the definition of $\mu(t)$, and where by Lemma \ref{flowsstaypositive}, $\pi_i(t)>0$.

\par
Then, from Lemma \ref{intoverltrees}, $v_\ell(t)$ is given by:
$$\ell ! v_\ell(t) = \sum_{T\in T^\rho_{[\ell]}} \sum\limits_{\substack{i_1,\ldots,i_\ell\\ \in[k]}} \left[ \prod_{(m,n)\in E(T)}\kappa_{i_m,i_n(t)} \right] \prod_{m=1}^\ell \pi_{i_m}(t) \exp\left(-\sum_{j=1}^k \kappa_{i_m,j}(t)\pi_j(t)\right).$$

We differentiate directly with the product rule, and use \eqref{eq:flowDE} and \eqref{eq:doffspringdt}. For brevity, we set
$$A(t):= \sum_{m=1}^\ell \sum_{j=1}^k \kappa_{i_m,j}(t)\pi_j(t).$$
Note throughout that $A(t)$ is a function of $(i_1,\ldots,i_\ell)$. Then, for $t>t_c$,
\begin{align}
\ell !\frac{\mathrm d}{\mathrm d t} v_\ell(t) &= \sum_{T\in T^\rho_{[\ell]}} \sum\limits_{\substack{i_1,\ldots,i_\ell\\ \in[k]}} \exp(-A(t))\left[ \prod_{(m,n)\in E(T)}\kappa_{i_m,i_n}(t) \right]\nonumber\\
&\qquad\qquad\qquad\times
 \left[ \phi(t) \sum_{m'=1}^\ell\frac{\mu_{i_{m'}}(t)}{\pi_{i_{m'}}(t)} - \ell\Phi(t)\right] \left[\prod_{m=1}^\ell \pi_{i_m}(t)\right]\label{eq:dvldt} \\ 
&\quad - \phi(t)\sum_{T\in T^\rho_{[\ell]}} \sum\limits_{\substack{i_1,\ldots,i_\ell\\ \in[k]}} \exp(-A(t))\left[ \prod_{(m,n)\in E(T)}\kappa_{i_m,i_n}(t) \right] \sum_{m=1}^\ell \mu_{i_m}(t) \prod_{\substack{m'=1\\m'\ne m}}^\ell \pi_{i_m}(t) \nonumber \\ 
&\quad + \sum_{T\in T^\rho_{[\ell]}}\sum\limits_{\substack{i_1,\ldots,i_\ell\\ \in[k]}} \exp(-A(t))\left[\sum_{(m,n)\in E(T)} \prod_{\substack{(m',n')\in E(T)\\ (m',n')\ne (m,n)}}\kappa_{i_{m'},i_{n'}}(t) \right] \prod_{m=1}^\ell \pi_{i_m}(t). \nonumber
\end{align}
The first line comes from differentiating $\exp(-A(t))$ using \eqref{eq:doffspringdt}; the second line from differentiating $\pi_{i_m}(t)$ using \eqref{eq:flowDE}; and the final line from $\prod_{(m,n)\in E(T)} \kappa_{i_m,i_n}(t)$ directly. In the first two lines, the terms involving $\phi(t)$ cancel, leaving $-\ell \cdot \ell! v_\ell(t)\Phi(t)$. (This applies equally on $t<t_c$, where $\Phi(t)\equiv 1$; and $t=t_c$, as the left- and right-derivatives match.)

To deal with the third line, given $T$ and $(m,n)\in E(T)$, consider the pair of disjoint trees $T^m,T^m$ formed by removing the edge $(m,n)$ from $T$, where $m\in T^m$ and $n\in T^n$. Then the sum in the third line of \eqref{eq:dvldt} splits as a product across these two trees:
\begin{align}
&\sum_{T\in T^\rho_{[\ell]}}\sum_{(m,n)\in E(T)}\sum\limits_{\substack{i_1,\ldots,i_\ell\\ \in[k]}}\left[\prod_{\substack{(m',n')\in E(T)\\(m',n')\ne (m,n)}} \kappa_{i_{m'},i_{n'}}(t) \right]\prod_{m=1}^\ell \pi_{i_m}(t) \exp\left(-\sum_{j=1}^k \kappa_{i_m,j}(t)\pi_j(t)\right) \nonumber\\
& = \sum_{T\in T^\rho_{[\ell]}}\sum_{(m,n)\in E(T)}\left ( \sum\limits_{\substack{i_{m'}\in[k]\\ m'\in T^m\\}}\left[\prod_{(m',n')\in E(T^m)} \kappa_{i_{m'},i_{n'}}(t) \right]\left[\prod_{m'\in T^m}\pi_{i_{m'}}(t) \exp\left(-\sum_{j=1}^k \kappa_{i_{m'},j}(t)\pi_j(t)\right)\right]  \right) \nonumber\\
&\quad\times\left ( \sum\limits_{\substack{i_{n'}\in[k]\\ n'\in T^n\\}}\left[\prod_{(m',n')\in E(T^n)} \kappa_{i_{m'},i_{n'}}(t) \right]\left[\prod_{n'\in T^n}\pi_{i_{n'}}(t) \exp\left(-\sum_{j=1}^k \kappa_{i_{n'},j}(t)\pi_j(t)\right)\right]  \right). \label{eq:splitastreeprod}
\end{align}
Consider the set of rooted trees on $[\ell]$ with an identified edge
$$\mathbb{T}_{[\ell]}:= \left\{ \left(T,\{m,n\}\right)\,:\, T\in T^\rho_{[\ell]}, \{m,n\}\in E(T)\right\}.$$
Recall a \emph{rooted forest} is a disjoint union of rooted trees. Let $T_{[\ell]}^{(2)}$ be the set of rooted forests on $[\ell]$ with exactly two trees. Consider the map from $\mathbb{T}_{[\ell]}$ to $T_{[\ell]}^{(2)}$ given by removing the identified edge $\{m,n\}$ from $T$, and rooting the two resulting trees at $m$ and $n$. This map is $\ell$-to-1, since the root of $T$ plays no role in the map!
\par
So in \eqref{eq:splitastreeprod}, we may replace the double sum
$$\sum_{T\in T^\rho_{[\ell]}} \sum_{\{m,n\}\in E(T)}\quad\text{with the sum}\quad\ell \sum_{\substack{T^1\sqcup T^2\\ \in T_{[\ell]}^{(2)}}}.$$
Then, by considering which elements of $[\ell]$ belong to each of the two trees, we can replace the latter sum with
$$\frac{\ell}{2} \sum_{r=1}^{\ell-1} \sum_{A\in \binom{[\ell]}{r}} \sum_{T^1\in T^\rho_A} \sum_{T^2\in T^\rho_{[\ell]\backslash A}},$$
where, recall, $T^{\rho}_A$ is the set of rooted trees labelled by $A$. Note that in this sum, the $\frac12$ appears because the order of trees $T^1,T^2$ does not matter. So we rewrite \eqref{eq:splitastreeprod} as
$$\frac{\ell}{2}\sum_{r=1}^{\ell-1} \sum_{A\in\binom{[\ell]}{r}} \left ( \sum_{T^1\in T^\rho_A}\; \sum\limits_{\substack{i_{m'}\in[k]\\ m'\in T^1\\}}\left[\prod_{\{m',n'\}\in E(T^1)} \kappa_{i_{m'},i_{n'}}(t) \right]\left[\prod_{m'\in T^1}\pi_{i_{m'}}(t) \exp\left(-\sum_{j=1}^k \kappa_{i_{m'},j}(t)\pi_j(t)\right)\right]  \right)$$
$$\times \left ( \sum_{T^2\in T^\rho_{[\ell]\backslash A}}\sum\limits_{\substack{i_{n'}\in[k]\\ n'\in T^2\\}}\left[\prod_{\{m',n'\}\in E(T^2)} \kappa_{i_{m'},i_{n'}}(t) \right]\left[\prod_{n'\in T^2}\pi_{i_{n'}} \exp\left(-\sum_{j=1}^k \kappa_{i_{n'},j}(t)\pi_j(t)\right)\right]  \right).$$

Relabelling variables inside each large bracket, and moving factorials around, we obtain
$$\frac{\ell}{2}\cdot \ell !\sum_{r=1}^{\ell-1} \left (\frac{1}{r!} \sum_{T^1\in T^\rho_{[r]}} \;\;\sum\limits_{\substack{i_1,\ldots,i_r\\ \in[k]}} \left[ \prod_{\{m,n\}\in E(T)}\kappa_{i_m,i_n}(t) \right] \prod_{m=1}^r \pi_{i_m}(t) \exp\left(-\sum_{j=1}^k \kappa_{i_m,j}(t)\pi_j(t)\right)  \right)$$
$$\times \left ( \frac{1}{(\ell-r)!} \sum_{T^2\in T^\rho_{[\ell-r]}} \;\;\sum\limits_{\substack{i_1,\ldots,i_{\ell-r}\\ \in[k]}} \left[ \prod_{\{m,n\}\in E(T)}\kappa_{i_m,i_n}(t) \right] \prod_{m=1}^{\ell-r} \pi_{i_m}(t) \exp\left(-\sum_{j=1}^k \kappa_{i_m,j}(t)\pi_j(t)\right)   \right),$$

which is equal to
$$\ell ! \cdot \frac{\ell}{2} \sum_{r=1}^{\ell-1} v_r(t)v_{\ell-r}(t).$$
We have already seen that the first two lines of \eqref{eq:dvldt} are equal to $-\ell \cdot \ell ! v_\ell(t)\Phi(t)$. Therefore, cancelling the $\ell!$ terms, we conclude from \eqref{eq:dvldt} that
$$\frac{\mathrm d}{\mathrm d t}v_\ell(t)= \frac{\ell}{2} \sum_{r=1}^{\ell-1} v_r(t)v_{\ell-r}(t) - \ell\Phi(t)v_\ell(t),$$
for all $t\ge 0$, as required.
\end{proof}
\end{prop}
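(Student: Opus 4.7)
The plan is to verify the two assertions separately. For the identity $\sum_{\ell \ge 1} v_\ell(t) = \sum_{i\in[k]} \pi_i(t)$, I would apply Proposition \ref{Xisurvivallemma}. Since $\pi$ is a frozen percolation type flow, $\rho(\kappa(t)\circ\pi(t)) \le 1$ for all $t \ge 0$: for $t \ge t_c$ this is exactly \eqref{eq:critcondition}, while for $t \in [0, t_c)$ we have $\pi(t) = \pi(0)$ and $\kappa(t) \circ \pi(0)$ is entrywise nondecreasing in $t$, so its Perron root is nondecreasing and bounded above by $\rho(\kappa(t_c)\circ\pi(0)) = 1$. Hence $\Prob{|\Xi^{(t)}| = \infty} = 0$ and $\sum_{\ell \ge 1} v_\ell(t) = \Prob{\Xi^{(t)} \ne \varnothing} = \sum_i \pi_i(t)$. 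This also shows $\Phi$ is constant on $[0, t_c]$, matching $T_{\mathrm{gel}} = t_c$.

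For the Smoluchowski equations on $t > t_c$, I would differentiate the explicit tree-sum formula from Lemma \ref{intoverltrees}. Writing $\ell! v_\ell(t) = \sum_{T \in T^\rho_{[\ell]}} \sum_{\mathbf{i} \in [k]^\ell} F_T(t)\, G(t)\, H(t)$, with $F_T = \prod_{(m,n) \in E(T)}\kappa_{i_m,i_n}(t)$, $G = \prod_m \pi_{i_m}(t)$, and $H = \prod_m \exp(-\sum_j \kappa_{i_m,j}(t)\pi_j(t))$, the product rule yields three contributions. Since $\frac{d}{dt}\kappa_{i,j}(t) = 1$, differentiating $F_T$ replaces one edge factor by $1$; differentiating $G$ introduces factors of $\pi_{i_m}'(t)/\pi_{i_m}(t) = -\phi(t)\mu_{i_m}(t)/\pi_{i_m}(t)$ via \eqref{eq:flowDE}; and differentiating $H$ requires $\frac{d}{dt}\sum_j \kappa_{i,j}(t)\pi_j(t) = \Phi(t) - \phi(t)\sum_j \kappa_{i,j}(t)\mu_j(t)$.

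The key algebraic input is the identity $\sum_j \kappa_{i,j}(t)\mu_j(t) = \mu_i(t)/\pi_i(t)$ for each $i \in [k]$; here the symmetry of $\kappa$ is essential. The left-eigenvector equation reads $\sum_i \mu_i(t)\kappa_{i,j}(t)\pi_j(t) = \mu_j(t)$, so dividing by $\pi_j(t)$ gives $\sum_i \kappa_{i,j}(t)\mu_i(t) = \mu_j(t)/\pi_j(t)$, and by $\kappa_{i,j} = \kappa_{j,i}$ this is equivalent to the claim. Applying this identity, the $\phi$-dependent contributions from the $G$-derivative and the $H$-derivative cancel precisely, leaving $-\ell \Phi(t) \cdot \ell!\, v_\ell(t)$, which is the fragmentation term of \eqref{eq:smol} scaled by $\ell!$. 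For $t < t_c$ the calculation is simpler since $\pi$ is constant, and left- and right-derivatives at $t_c$ agree by continuity of $\pi$.

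For the $F_T$-derivative producing the coagulation term, I would use a combinatorial bijection. Given a rooted tree $T$ on $[\ell]$ with a marked unordered edge $\{m,n\}$, removing the edge and rerooting the two resulting components at $m$ and $n$ yields an unordered pair of rooted trees on a partition of $[\ell]$; since the original root of $T$ is forgotten, the map is $\ell$-to-$1$. The resulting sum factorises across the two subtrees, and rewriting the sum over unordered pairs as a double sum over sizes $r = 1, \ldots, \ell-1$ and subsets $A \subseteq [\ell]$ of size $r$ (with a compensating factor of $1/2$ for double-counting) produces $\frac{\ell \cdot \ell!}{2}\sum_{r=1}^{\ell-1} v_r(t)\, v_{\ell-r}(t)$ upon reapplying Lemma \ref{intoverltrees} to each subset. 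Dividing by $\ell!$ gives \eqref{eq:smol}. The main obstacle I anticipate is the combinatorial bookkeeping in this last step, especially tracking the $\ell$-to-$1$ forgetful map for the root; the analytic manipulations flow directly once the eigenvector identity is in hand.
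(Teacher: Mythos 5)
Your proposal is correct and follows essentially the same route as the paper's proof: use Proposition~\ref{Xisurvivallemma} for \eqref{eq:barPhiPhi}, differentiate the tree-sum formula of Lemma~\ref{intoverltrees} via the product rule, exploit the symmetry of $\kappa$ to obtain $\sum_j \kappa_{i,j}(t)\mu_j(t) = \mu_i(t)/\pi_i(t)$ (the paper's \eqref{eq:doffspringdt}) so that the $\phi$-terms cancel, and then handle the edge-derivative contribution by the same $\ell$-to-$1$ forgetful map from rooted trees with a marked edge to rooted two-component forests. Your explicit monotonicity argument showing $\rho(\kappa(t)\circ\pi(0))\le 1$ for $t<t_c$ is a small but correct elaboration of a point the paper treats as immediate.
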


\subsection{FP type flows are unique}

Now we can finish the proof of Proposition \ref{uniqueprop} using the following lemma, whose proof is postponed to Section \ref{Lipschitzproof}.
\begin{lemma}\label{muLipprop1}For any $0<\eta<T<\infty$, there exists $C(\eta,T)<\infty$ such that, for all matrices $A,A'\in[\eta,T]^{k\times k}$,
\begin{equation}\label{eq:mulocLip1}||\mu(A)-\mu(A')||_1 \le C(\eta,T)\max_{i,j\in[k]} |A_{i,j}-A'_{i,j}|.\end{equation}
\end{lemma}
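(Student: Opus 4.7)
The plan is to apply the implicit function theorem to show that $A\mapsto \mu(A)$ is continuously differentiable on the open set $\R_{>0}^{k\times k}$ of strictly positive matrices, from which the desired Lipschitz bound on the compact subset $[\eta,T]^{k\times k}$ follows by compactness and the mean value inequality.

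For any $A\in\R_{>0}^{k\times k}$, Perron--Frobenius theory gives that $\rho(A)$ is a simple eigenvalue of $A$, and the left eigenspace is one-dimensional and spanned by a strictly positive vector, which after $\ell^1$-normalisation is $\mu(A)$. To extract smooth dependence, I would consider the map
$$F:\R^{k\times k}\times \R^k \times \R \longrightarrow \R^k\times\R, \qquad F(A,v,\lambda)=\Bigl(vA-\lambda v,\; \sum_{i=1}^k v_i -1\Bigr),$$
whose zero set near a Perron triple cuts out $(\mu(A),\rho(A))$. The key step is to verify that the partial Jacobian $D_{(v,\lambda)}F$ at such a point $(A_0,\mu(A_0),\rho(A_0))$ is invertible, i.e.\ that if $(u,\alpha)\in \R^k\times\R$ satisfies $u(A_0-\rho(A_0)I)=\alpha\,\mu(A_0)$ and $\sum_i u_i=0$, then $u=0$ and $\alpha=0$. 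By the Fredholm alternative applied to the transpose, the first equation is solvable in $u$ only if $\alpha\,\mu(A_0)$ is orthogonal to the right Perron eigenvector of $A_0$; both of those vectors are strictly positive, so their inner product is strictly positive, forcing $\alpha=0$. With $\alpha=0$, $u$ must lie in the (one-dimensional) left Perron eigenspace, so $u=c\,\mu(A_0)$; the normalisation condition $\sum_i u_i=0$ combined with $||\mu(A_0)||_1=1$ then yields $c=0$.

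With this invertibility established, the implicit function theorem gives that $A\mapsto \mu(A)$ is continuously differentiable (indeed real-analytic) on $\R_{>0}^{k\times k}$. Since $[\eta,T]^{k\times k}$ is a compact subset of this open set, the operator norm of $D\mu$ attains a finite maximum $C(\eta,T)$ on it, and the mean value inequality on the convex set $[\eta,T]^{k\times k}$ delivers the claimed estimate. The main obstacle is the Jacobian invertibility step, which relies crucially on the simplicity of the Perron eigenvalue for positive matrices; everything else is standard analysis.
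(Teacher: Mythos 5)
Your proof is correct, and it takes a genuinely different route from the paper's. The paper cites a ready-made theorem of Magnus and Neudecker (\cite{MagnusNeudecker}, \textsection3.9, Theorem 8) giving local smooth dependence of an eigenvector pair $(\bar\mu(A),\rho(A))$ near a matrix with a simple eigenvalue, but with the normalisation $\mu_0^\dagger\bar\mu(A)=1$ rather than $||\mu(A)||_1=1$; the paper must then do an extra step, showing $||\bar\mu(A)||_1\ge 1$ and using a triangle-inequality estimate to transfer the Lipschitz property from $\bar\mu$ to $\mu$. You instead build the $\ell^1$ normalisation directly into the defining map $F(A,v,\lambda)=(vA-\lambda v,\sum_i v_i-1)$ and verify Jacobian invertibility from scratch via the Fredholm alternative, using the positivity of the left and right Perron eigenvectors to kill $\alpha$ and the simplicity plus the normalisation row to kill $u$. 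This makes your argument more self-contained (you essentially re-derive the content of the Magnus--Neudecker lemma in the form you need) and avoids the renormalisation detour, at the modest cost of having to check that the IFT branch produced near $(A_0,\mu(A_0),\rho(A_0))$ actually coincides with the Perron pair for nearby $A$ (which follows by continuity, since the branch stays close to a strictly positive vector and a simple eigenvalue). Both arguments hinge on the same structural fact --- simplicity of the Perron root for strictly positive matrices --- and both finish by compactness of $[\eta,T]^{k\times k}$, which is convex, so the mean value inequality applies as you say.
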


Now suppose we have FP type flows $\pi(\cdot)$ and $\nu(\cdot)$ with the same initial kernel $\kappa$ and $\pi(0)=\nu(0)$. Set $\Phi^\pi(\cdot):= ||\pi(\cdot)||_1$ and $\Phi^\nu(\cdot):= ||\nu(\cdot)||_1$. Then, consider the associated solutions to the Smoluchowski equations given by Proposition \ref{flowissmolprop}, $(v^\pi(\cdot)),(v^\nu(\cdot))$. Crucially, $\pi(0)=\nu(0)$ implies $v^\pi(0)=v^\nu(0)$. Theorem \ref{smolunique} concerning uniqueness of solutions to Smoluchowski's equations then gives $v^\pi(t)=v^\nu(t)$ for all times $t\ge 0$. Furthermore, from \eqref{eq:barPhiPhi}, $\Phi^\pi(t)=\Phi^\nu(t)$ for all $t\ge 0$, and $t^\pi_c=t^\nu_c$, with $\phi^\pi(t)=\phi^\nu(t)$ for all $t\ge t^\pi_c$.

\par
We may now use the classical technique for verifying uniqueness of solutions to ODEs, using the local Lipschitz property of $\mu$ from Lemma \ref{muLipprop1}. The flow $\pi(\cdot)$ satisfies the integral version of \eqref{eq:flowDE},
\begin{equation}\label{eq:integralDE}\pi(t)=\pi(t_c) - \int_{t_c}^t \mu(\kappa(s)\circ \pi(s)) |\mathrm{d}\Phi^\pi(s)|,\quad t\ge t_c,\end{equation}
and similarly for $\nu(\cdot)$. So
$$\pi(t)-\nu(t) = \int_{t_c}^t \left[\mu(\kappa(s)\circ \nu(s))-\mu(\kappa(s)\circ \pi(s))\right] |\mathrm{d}\Phi(s)|,\quad t\ge t_c.$$

For a fixed time $T>t_c$, by Lemma \ref{flowsstaypositive}, we can choose $\eta>0$ such that $\pi_i(T),\nu_i(T)\ge\eta$ for all $i\in[k]$. Now set $\delta:= t_c\wedge \kappa_{\min}$. From the assumptions we made about the initial conditions, $\kappa_{i,j}(t)\ge \delta>0$ whenever $t\ge t_c$. So with constant $C(\delta \eta,\kappa_{\max}+T)$ given by \eqref{eq:mulocLip1}, for $t\in[t_c,T]$,
\begin{align}
||\mu(\kappa(t)\circ \pi(t)) - \mu(\kappa(t)\circ \nu(t))||_1 &\le C(\delta \eta, \kappa_{\max}+T) \max_{i,j\in[k]} \left[ \kappa(t)\circ \pi(t) - \kappa(t) \circ \nu(t) \right]_{i,j}\nonumber\\
&\le C(\delta \eta, \kappa_{\max}+T) \cdot (\kappa_{\max}+T) ||\pi(t)-\nu(t)||_1.\label{eq:usingmuLip}\\
\intertext{Therefore, for $t\in[t_c,T]$,}
||\pi(t)-\nu(t)||_1 &\le C(\delta \eta,T+\kappa_{\max}) \int_{t_c}^t ||\pi(s)-\nu(s)||_1 \phi(s) \mathrm{d}s. \nonumber
\end{align}

We have $\pi(t_c)=\nu(t_c)$, so applying Gronwall's Lemma gives $\pi(t)=\nu(t)$ for all $t\in[t_c,T]$. But $T$ was arbitrary, and so in fact we may conclude $\pi(t)=\nu(t)$ for all $t\ge 0$. This completes the proof of Proposition \ref{uniqueprop}.\qed

\section{Large components in inhomogeneous random graphs}\label{largecptssection}

During a $k$-type frozen percolation process, most vertices are frozen at a moment when they are in a large component of an `almost-critical' IRG. In this section, we derive concentration estimates on the proportion of types seen in such components. We will use these estimates in Section \ref{weaklimitsection} to show that weak limits of $k$-type FPPs satisfy the eigenvector property of the type flow equation \eqref{eq:flowDE}.

In the course of this section, we will require several technical results about positive matrices and their eigenvectors, and proofs of some of these will be postponed to Section \ref{technicalproofs} to avoid breaking the flow of the probabilistic argument.

\subsection{Exponential bounds on component sizes in $G^N(p,\kappa)$}\label{expbdsetupsection}

Recall Definition \ref{defnxi} of the multitype branching process $\Xi^{\pi,\kappa}$, and Proposition \ref{Xisurvivallemma} concerning its survival probability. Define $\zeta^{\pi,\kappa}_i:= \Prob{|\Xi^{\pi,\kappa}|=\infty\,|\,\mathrm{type}(\mathrm{root})=i}$. Since the offspring distributions are Poisson, it is shown in \cite{BJRinhomog} that $\zeta^{\pi,\kappa}$ is the maximal solution to
\begin{equation}\label{eq:fixedpointzeta}
\zeta^{\pi,\kappa}_i = 1 - \exp\left(-[(\kappa\circ\pi)\zeta^{\pi,\kappa}]_j\right),
\end{equation}
which we study in detail in Lemma \ref{O1boundzeta}. For now, we introduce a more detailed version of Proposition \ref{BJRL1basic}, complementing Proposition \ref{BJRweaklimit}.

\begin{prop}[\cite{BJRinhomog}, Theorem 3.1]\label{BJRL1enhanced}
Fix a positive kernel $\kappa\in\R_+^{k\times k}$ and subdistribution $\pi\in\Pi_{\le 1}$. Suppose a sequence $p^N\in\N_0^k$ satisfies $\sum_{i\in[k]} p^N_i = N$ and $p^N/N \rightarrow \pi$. Then
\begin{equation}\label{eq:L1conv}\frac{1}{N}L_1 \left(G^N(p^N,\kappa)\right) \quad\stackrel{\mathbb{P}}\longrightarrow \quad \pi\cdot\zeta^{\pi,\kappa},\qquad N\rightarrow\infty. \end{equation}
\end{prop}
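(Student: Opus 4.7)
The natural strategy combines the branching-process local limit (Proposition \ref{BJRweaklimit}) with a two-round sprinkling argument for uniqueness of the giant component. The key observation is that $\pi\cdot\zeta^{\pi,\kappa}=\Prob{|\Xi^{\pi,\kappa}|=\infty}$ is the survival probability of the approximating multitype branching process. Since Proposition \ref{BJRweaklimit} already concentrates the empirical distribution of finite component sizes on the law of $|\Xi^{\pi,\kappa}|$, one expects the fraction of vertices in ``large'' components to converge to this survival probability; the remaining task is to verify that these vertices almost all lie in a single component.

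For the upper bound, I would sum the conclusion of Proposition \ref{BJRweaklimit} over $\ell\le M$ and take complements to obtain, for each fixed $M$,
$$\frac{1}{N}\#\{v:|C(v)|>M\}\ \stackrel{\mathbb{P}}\longrightarrow\ \Prob{|\Xi^{\pi,\kappa}|>M}.$$
A diagonal argument along a slowly-growing $\omega(N)\to\infty$, $\omega(N)=o(N)$, then gives the fraction of vertices in components of size $>\omega(N)$ converging in probability to $\pi\cdot\zeta^{\pi,\kappa}$, and hence $\limsup_N L_1(G^N)/N\le \pi\cdot\zeta^{\pi,\kappa}$ immediately. In the subcritical and critical cases $\rho(\kappa\circ\pi)\le 1$ we have $\zeta=0$, so this already yields the claim.

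For the matching lower bound when $\rho(\kappa\circ\pi)>1$, I would use a two-round exposure: the identity $e^{-\kappa_{ij}/N}=e^{-(1-\delta)\kappa_{ij}/N}\cdot e^{-\delta\kappa_{ij}/N}$ lets us write $G^N(p^N,\kappa)=G^N_1\cup G^N_2$, the edge-union of independent IRGs with kernels $(1-\delta)\kappa$ and $\delta\kappa$. Applying the upper-bound reasoning to $G^N_1$ produces a set of size approximately $N\,\pi\cdot\zeta^{\pi,(1-\delta)\kappa}$ lying in large components of $G^N_1$. Sprinkling with $G^N_2$ then merges any two such components of macroscopic size with high probability, since the expected number of $G^N_2$-edges between components of sizes $a,b=\Theta(N)$ is $\Theta(\delta\kappa_{\min}\,ab/N)=\Theta(N)$. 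Letting $\delta\to 0$ and invoking continuity of $\zeta$ in the kernel (via the fixed-point relation \eqref{eq:fixedpointzeta}) gives $\liminf_N L_1/N\ge \pi\cdot\zeta^{\pi,\kappa}$.

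The main obstacle is the intermediate regime: the sprinkling computation above only merges components of size $\gg\sqrt{N}$, whereas the local-limit step identifies large components merely at scale $\omega(N)$, which could be much smaller. Closing this gap requires either iterating the sprinkling to bootstrap from sub-macroscopic to macroscopic scales, or directly establishing (via a second-moment estimate on the component-exploration process) that almost all of the mass flagged as ``large'' already lies in components of size $\Theta(N)$, so that at most a negligible fraction is trapped in the intermediate range. A careful BFS exploration with stopping times in the spirit of \cite{BJRinhomog} handles both points; I would follow that route rather than attempt a reproof from scratch.
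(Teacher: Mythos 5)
The paper does not actually prove this proposition; it is imported verbatim as Theorem 3.1 of Bollob\'as--Janson--Riordan \cite{BJRinhomog}, so there is no ``paper's proof'' to compare against. Your sketch follows the standard route (local limit for small-component densities, then a two-round sprinkling/exploration argument for uniqueness of the macroscopic piece), which is close in spirit to the argument in \cite{BJRinhomog} itself.

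However, you should recognise that what you have written is a reduction, not a proof. The gap you flag in your last paragraph --- that the local limit only controls the mass in components of size exceeding a slowly-growing threshold, while sprinkling as you have computed it only reliably merges components already of macroscopic size --- is the technical core of the theorem, not a loose end. Closing it requires either a bootstrapping iteration of the sprinkling across intermediate scales, or a second-moment/exploration estimate showing that almost none of the ``large'' mass is trapped at intermediate sizes; deferring to ``a careful BFS exploration in the spirit of \cite{BJRinhomog}'' is effectively a pointer to the entire original argument. Two further points you elide: the $\delta\to 0$ step requires $\zeta^{\pi,(1-\delta)\kappa}\to\zeta^{\pi,\kappa}$, i.e.\ continuity of the maximal fixed point of $F_{\pi,\kappa}$ (cf.\ \eqref{eq:fixedpointzeta}) as $\kappa$ increases to a limit, which is true but needs a monotone-convergence justification rather than mere continuity of the fixed-point map; and your upper bound should be phrased with the elementary observation that $L_1 \le \max\bigl(M,\ \#\{v:|C(v)|>M\}\bigr)$ so that the $M/N\to 0$ and local-limit contributions separate cleanly before the diagonalisation.
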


The limiting quantity in \eqref{eq:L1conv} vanishes as $\rho(\kappa\circ\pi)\downarrow 1$. For our purposes, we require exponential bounds on the probability that $\frac{1}{N}L_1(G^N(p^N,\kappa))$ is large, which hold uniformly among kernel-distribution pairs for which $\rho(\kappa\circ\pi)\le 1+\epsilon$, as follows.

\begin{prop}\label{LDPprop}
Fix $\eta,\epsilon\in(0,1/2)$. Then there exist $N_0=N_0(\epsilon,\eta)\in\N$ and constants $M=M(\eta)<\infty$ and $\Gamma=\Gamma(\epsilon,\eta)>0$, such that for any $N\ge N_0$ and
\begin{itemize}
\item any kernel $\kappa\in[\eta,\infty)^{k\times k}$;
\item any vector $p\in\N^k$ such that $\sum p_i=N$ and $p_i/N\ge \eta$ for each $i$;
\item and such that the eigenvalue condition $\rho(\kappa\circ p/N)\le 1+\epsilon$ is satisfied;
\end{itemize}
the following bound on the largest component in $G^N(\rho,\kappa)$ holds:
\begin{equation}\label{eq:uniformLDPbd}
\Prob{L_1\left(G^N(p,\kappa) \right) \ge M\epsilon N}\le \exp(-\Gamma N).
\end{equation}
\end{prop}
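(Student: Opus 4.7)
The plan is to combine a quantitative control of the branching-process survival probability $\zeta^{\pi,\kappa}$ (establishing that the correct scale is $\epsilon N$) with an exponential concentration of $L_1$ around that scale, uniformly over the admissible family of $(\kappa,p)$.

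First, writing $\pi = p/N$ and $A = \kappa\circ\pi$, I would establish that under the stated hypotheses,
$$||\zeta^{\pi,\kappa}||_\infty \;\le\; M_1(\eta)\,\epsilon.$$
Using $1-e^{-x} \le x - \tfrac12 x^2 + \tfrac16 x^3$ componentwise in the fixed-point equation \eqref{eq:fixedpointzeta}, and then pairing on the left with the Perron left-eigenvector $\mu(A)$, one obtains
$$\rho(A)\,\langle \mu(A),\zeta\rangle \;\ge\; \langle\mu(A),\zeta\rangle + c_1(\eta)\, ||\zeta||_2^2 + O(||\zeta||_\infty^3),$$
so that the assumption $\rho(A)\le 1+\epsilon$ forces $||\zeta||_2^2 = O(\epsilon\,||\zeta||_1)$, whence $||\zeta||_\infty = O(\eta^{-1}\epsilon)$ after using the $\eta$-dependent lower bounds on the entries of $\mu(A)$ coming from the Perron--Frobenius uniformity lemmas to be proved in Section \ref{technicalproofs}. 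In particular $\pi\cdot\zeta^{\pi,\kappa}\le M_1(\eta)\epsilon$, confirming that the threshold $M\epsilon N$ in \eqref{eq:uniformLDPbd} is of the right order.

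Second, I would promote the in-probability convergence $L_1/N \to \pi\cdot\zeta^{\pi,\kappa}$ of Proposition \ref{BJRL1enhanced} to exponential concentration via a direct BFS exploration. Starting from an arbitrary vertex $v$, reveal $C(v)$ in a height process $(H_t)$, where $H_t$ is the active-queue size and $|C(v)| = \inf\{t : H_t = 0\}$. The step increments are sums of Bernoullis with success probabilities $1-\exp(-\kappa_{i,j}/N)$ over the unexplored vertices of each type, whose means are governed entry-wise by $A$. Projecting this multitype walk onto $\mu(A)$ reduces it to an essentially one-dimensional walk with drift $\rho(A)-1\le\epsilon$; a Cramér-type tail bound then yields, for $M = M(\eta)$ sufficiently large compared to $M_1(\eta)$,
$$\Prob{|C(v)| \ge M\epsilon N} \;\le\; \exp\bigl(-\Gamma_1(\epsilon,\eta)\,N\bigr).$$
Since $\{L_1\ge M\epsilon N\}\subseteq \bigcup_v \{|C(v)|\ge M\epsilon N\}$, a union bound over the $N$ starting vertices absorbs the extra factor of $N$ into a slightly smaller rate $\Gamma(\epsilon,\eta)>0$, giving \eqref{eq:uniformLDPbd}.

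The principal obstacle is ensuring the Cramér-type bound on the projected walk is uniform in $(\kappa,p)$, with $\Gamma$ bounded away from $0$ over the admissible family. Two technical points stand out: (i) the increments are not i.i.d., since vertices are sampled without replacement, so a depletion argument is required to show that up to time $M\epsilon N$ the residual type counts have not deviated materially from their initial values (this deviation is itself exponentially rare by standard hypergeometric concentration); and (ii) the drift is only of order $\epsilon$, making the natural Cramér rate of order $\epsilon^2$ or $\epsilon^3$, so all constants must be tracked carefully to isolate a clean rate depending only on $\epsilon$ and $\eta$. Uniformity in the kernel then follows from the continuity properties of $\mu(\cdot)$ and $\rho(\cdot)$ on the Perron--Frobenius set cut out by the lower bound $\eta$, which will be encapsulated in the technical lemmas of Section \ref{technicalproofs}.
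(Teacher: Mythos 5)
Your bound on $\zeta^{\pi,\kappa}$ via Taylor expansion of $1-e^{-x}$ and pairing with the principal eigenvector is a valid alternative to the paper's Lemma~\ref{O1boundzeta} (the paper instead compares $\zeta^{\pi,\kappa}$ to an explicit supersolution $\theta\propto\nu$ built from the $1$-type survival probability $\alpha_\epsilon$ and iterates $F_{\pi,\kappa}$ downward). Both routes give $\pi\cdot\zeta^{\pi,\kappa}=O(\epsilon)$ uniformly over the admissible family. From there, however, the two arguments diverge sharply: the paper avoids a bespoke exploration analysis altogether, instead invoking the majorisation Lemma~\ref{majorisation} to stochastically dominate $L_1(G^N(p,\kappa))$ by the largest component of one of \emph{finitely many} fixed graphs $G^N(\lfloor N\pi^{(\ell)}\rfloor,\kappa^{(\ell)})$ with $\rho(\kappa^{(\ell)}\circ\pi^{(\ell)})=1+2\epsilon$, and then applies the exponential concentration of Theorem~1.4 of~\cite{BJR10} to each. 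This finesses all uniformity issues by compactness and leaves no constants to chase.

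Your BFS-and-Cram\'er alternative has a genuine conceptual gap in its current form. You model the projected queue process as ``an essentially one-dimensional walk with drift $\rho(A)-1\le\epsilon$'' and invoke a Cram\'er-type bound. But a walk with \emph{constant} drift $+\epsilon$ started near $0$ survives to time $M\epsilon N$ with probability of order $\epsilon$, not $e^{-\Gamma N}$ --- this is precisely why the expected giant component size is $\Theta(\epsilon N)$ rather than $o(1)$. The exponential decay does not come from a concentration bound on a fixed-drift walk; it comes from depletion turning the walk strictly subcritical once $\Theta(\epsilon N)$ vertices have been revealed. Concretely, after exploring $s N$ vertices the effective subdistribution is roughly $\pi(1-s/\Phi)$ and the effective Perron root is roughly $(1+\epsilon)(1-s)$, which drops below $1-c\epsilon$ for $s\gg\epsilon$; only then does one have a subcritical walk with drift $-\Theta(\epsilon)$ that dies exponentially fast. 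In your sketch you treat depletion as a nuisance to be controlled (``show that the residual type counts have not deviated materially from their initial values''), whereas it is the very mechanism that produces the rate $\Gamma(\epsilon,\eta)$. As written, the argument would give a bound of order $\epsilon$, not $e^{-\Gamma N}$.

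Two further technical points. First, to linearise the exploration you should project onto the Perron \emph{right}-eigenvector $\nu$ of $\kappa\circ\pi$ rather than the left-eigenvector $\mu$: the queue process satisfies $\E[W^{r+1}\mid W^r]\approx W^r(\kappa\circ\pi)$, so the relevant Lyapunov functional is $W^r\nu^\top$, with $(\kappa\circ\pi)\nu^\top=\rho\nu^\top$. (For symmetric $\kappa$ one has $\nu_i=\mu_i/\pi_i$, so this is fixable, but as stated the projection does not have the claimed drift.) Second, after repairing the depletion mechanism the Cram\'er rate will indeed be $\Gamma=\Theta(\epsilon^2)$ or worse, so you should check that $N e^{-\Gamma_1 N}\le e^{-\Gamma N}$ for $N\ge N_0(\epsilon,\eta)$ still gives a usable $N_0$; the paper sidesteps this by borrowing a ready-made $\Gamma$ from~\cite{BJR10} for finitely many kernels.
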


A version of this result is proved in the author's doctoral thesis (\cite{Yeothesis}, Chapter 3) using a $\Z^k$-valued multitype exploration process of the graph. We give a shorter proof here, by adapting exponential tail bounds shown in \cite{BJR10}, and using a majorisation lemma to reduce the uniform statement to a statement for finitely many pairs $(\pi,\kappa)$. Since the argument is self-contained and less novel, we postpone this proof to Section \ref{LDPproof}.

\subsection{Distribution of types at large radius}
We require a result about the proportion of types in \emph{all large components} of a near-critical IRG; that is, not just the giant component (if it exists). We will approach this by considering the types of vertices connected at large distance from a uniformly chosen vertex. For many choices of the root vertex there will be no vertices at large radius. But for large components, the \emph{majority} of the vertices in such components will be a large distance from a uniformly chosen vertex.

\par
For any graph $G$ with $k$ types on $N$ vertices, we will take $v$ to be a uniformly chosen vertex. Then, for $r=0,1,\ldots,N-1$, define $W^r\in\N_0^k$ by,
$$W^r_i:=\#\{\text{type $i$ vertices distance $r$ from $v$}\},\quad i\in[k],$$
and $W^{\ge R}_i := \sum_{r=R}^{N-1} W^r_i$.

The goal of this section is the following theorem.
\begin{theorem}\label{typecontrolthm}Fix constants $0<\eta<T<\infty$. Now, for any $\delta>0$, there exists $\epsilon=\epsilon(\delta,\eta,T)>0$, and $R=R(\delta,\eta,T),N_0=N_0(\delta,\eta,t)\in\N$ satisfying the following. Consider any $\kappa\in [\eta,T]^{k\times k}$ and $p\in\N_0^k$ satisfying $\sum p_i=N\ge N_0$ and $p_i\ge \eta N$, with $\rho(\kappa\circ \frac{p}{N})\le 1+\epsilon$. Then $W^{\ge R}$ corresponding to $v$, a uniformly chosen vertex in $G^N(p,\kappa)$ satisfies
\begin{equation}\label{eq:propsbeyondR}\left|\left|\E{W^{\ge R}} - \mu(\kappa\circ \pi) \left|\left|\E{W^{\ge R}}\right|\right|_1 \right|\right|_1 \le \delta \left|\left|\E{W^{\ge R}}\right|\right|_1.\end{equation}

Furthermore, recall the definition of $M=M(\eta)$ from Proposition \ref{LDPprop}, and set $\chi=M\epsilon$. Define the event $A_\chi:=\{||W^{\ge R}||_1\le \chi N\}$, that the component containing $v$ includes at most $\chi N$ vertices with radius at least $R$ from $v$. Then we also have a constant $N_1=N_1(\delta,\eta,T)$ such that whenever $N\ge N_1$,
\begin{equation}\label{eq:types<chi}\left|\left|\E{W^{\ge R}\mathds{1}_{A_\chi}} - \mu(\kappa\circ \pi) \left|\left|\E{W^{\ge R}\mathds{1}_{A_\chi}}\right|\right|_1 \right|\right|_1 \le \delta \left|\left|\E{W^{\ge R}\mathds{1}_{A_\chi}}\right|\right|_1.\end{equation}
\end{theorem}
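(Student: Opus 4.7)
I first observe that Part 1 follows from Part 2 together with Proposition \ref{LDPprop}: since $||W^{\ge R}||_1 \le L_1(G^N(p,\kappa))$ pointwise, we have $\Prob{A_\chi^c} \le \exp(-\Gamma N)$ for $N$ large, hence $||\E{W^{\ge R}\mathds{1}_{A_\chi^c}}||_1 \le N e^{-\Gamma N}$. On the other hand, $||\E{W^{\ge R}}||_1$ is bounded below by a positive constant independent of $N$, via the BFS--BRW comparison at depth $R$ described below (the relevant BRW expectations are uniformly positive since $\kappa_{ij}\ge \eta^2$). Thus the two statements differ by sub-exponentially small corrections, and it suffices to prove Part 2.

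\textbf{Ingredients for Part 2.} The proof will rest on two pieces. First, a deterministic Perron--Frobenius lemma: for $M = \kappa \circ \pi$ with entries in $[\eta^2, T]$, the spectral gap $1 - |\lambda_2(M)|/\rho(M)$ is uniformly bounded below by $c_0 = c_0(\eta,T) > 0$, so $M^r/\rho(M)^r \to \nu(M)\mu(M)^\top$ at a uniform rate. This yields $R_0 = R_0(\delta, \eta, T)$ such that $\pi^\top M^r$ lies in direction within $\delta/4$ of $\mu(M)$ for every $r \ge R_0$; and since $\E{Z^r} = \pi^\top M^r$ for the BRW $\Xi^{\pi,\kappa}$ of Definition \ref{defnxi} with $Z^r$ the generation-$r$ type-count vector, every sufficiently deep expected generation vector lies in direction close to $\mu$, uniformly over valid $(\pi,\kappa)$. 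Second, a BFS--BRW coupling (a quantitative refinement of Proposition \ref{BJRweaklimit}): for $r \le r_N = \lfloor c \log N \rfloor$ with $c>0$ small, the ball of radius $r$ around the uniform vertex $v$ matches the first $r$ generations of $\Xi^{\pi,\kappa}$ in total variation, up to $o_N(1)$, so $\E{W^r} = (1+o_N(1))\E{Z^r}$. Combining the two, $\E{W^{[R_0, r_N]}}$ has direction within $\delta/2$ of $\mu$ once $N$ is large, while the remaining depth contribution on $A_\chi$ satisfies $||\E{W^{> r_N}\mathds{1}_{A_\chi}}||_1 \le \chi N = M\epsilon N$ directly from the definition of $A_\chi$.

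\textbf{Main obstacle and closing.} The deep-tail estimate $\chi N$ is only a magnitude bound; in the slightly supercritical regime, where the giant component may have size $\Theta(\epsilon N)$, it can rival or exceed $||\E{W^{[R_0, r_N]}}||_1$, so the deep tail cannot be dismissed as an error term. To conclude I must also show that $\E{W^{> r_N}\mathds{1}_{A_\chi}}$ itself lies in direction close to $\mu$. My guiding idea is that the types encountered along BFS paths form an approximate Markov chain whose invariant direction is precisely $\mu$, so deep within a large component the type distribution has equilibrated; the mixing rate is governed by the very same uniform spectral gap furnished by Ingredient 1, and the initial mixing up to depth $r_N$ is handled by the BFS--BRW coupling. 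Making this quantitative with constants uniform over the compact kernel set is the delicate step, and $\epsilon$ is finally chosen small enough (as a function of $\delta, \eta, T$) to absorb all remaining constants and push through the direction estimate on $A_\chi$.
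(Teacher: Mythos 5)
Your guiding intuition---that the types seen along successive BFS rings form an approximate Markov chain whose stationary direction is $\mu(\kappa\circ\pi)$, with a uniform spectral gap on the compact kernel class---is exactly right, and is indeed the engine of the paper's proof (it is formalised as Lemmas \ref{mixmatrices} and \ref{convevector}). However, your proposal stops exactly where the theorem becomes nontrivial: you explicitly flag that ``making this quantitative with constants uniform over the compact kernel set is the delicate step'' and leave it open. That step \emph{is} the theorem. Moreover, the depth decomposition into $[R_0, r_N]$ and $>r_N$ is not a workable scaffold for closing it. The BFS--BRW coupling only controls radii up to $O(\log N)$, which is fine for the intermediate window, but in the slightly supercritical regime the deep tail $\E{W^{>r_N}\mathds{1}_{A_\chi}}$ can carry mass of order $\chi N = M\epsilon N$, comparable to or larger than the intermediate contribution; you cannot dismiss it and you cannot reach it via the BRW comparison. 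The event $A_\chi$ controls the \emph{size} of the deep tail, not its direction, so restricting to $A_\chi$ does not by itself yield the eigenvector estimate there.

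The paper closes this gap without any depth decomposition, and that is its key move. It introduces the auxiliary process $\tilde W^{r+m}$ which coincides with $W^{r+m}$ as long as the running total $S^{r+m}$ stays below $\chi N$, and thereafter evolves deterministically by right-multiplication with $\kappa\circ\pi$. By construction, the conditional one-step evolution of $\tilde W$ is \emph{always} $\E{\tilde W^{r+m+1}\,|\,\cdot}=\tilde W^{r+m}D^{(r+m)}$ with $D^{(r+m)}\in\mathbb{B}_\theta(\kappa\circ\pi)$, for every $r$ and $m$, whether or not the exploration has grown large. Averaging these random transition matrices (Lemma \ref{mixmatrices}, a convexity statement about $\mathbb{B}_\theta$) then yields $\E{\tilde W^{r+R}\,|\,\F_r}=W^r D^{(1)}\cdots D^{(R)}$ for $\F_r$-measurable $D^{(i)}\in\mathbb{B}_\theta(\kappa\circ\pi)$, valid for \emph{every} starting radius $r$, shallow or deep. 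Lemma \ref{convevector} (which contains the uniform spectral-gap ingredient you cite) then gives the direction estimate conditionally at each radius, summing over $r$ gives \eqref{eq:Ydirbd}, and the discrepancy between $\sum_r \tilde W^{r+R}$ and $W^{\ge R}$ (respectively $W^{\ge R}\mathds{1}_{A_\chi}$) lives on the event $\{|C(v)|>\chi N\}$, which is exponentially unlikely by Proposition \ref{LDPprop} and hence negligible against the deterministic lower bound \eqref{eq:lowerboundsumYr}. Your reduction of \eqref{eq:propsbeyondR} to \eqref{eq:types<chi} via $\Prob{A_\chi^c}\le e^{-\Gamma N}$ and a uniform lower bound on $||\E{W^{\ge R}}||_1$ is correct, but it does not relieve you of proving \eqref{eq:types<chi}, and the sketch you give for that does not constitute a proof.
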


\begin{remark}
Neither the statements nor the proofs consider the value, or even the scale of $\left|\left|\E{W^{\ge R}}\right|\right|_1$. The results \eqref{eq:propsbeyondR} and \eqref{eq:types<chi} deal only with the \emph{direction} of $\E{W^{\ge R}}$, since they hold uniformly over $\rho\le 1+\epsilon$, which includes subcritical, critical and supercritical regimes. That is, for $\rho=1+\epsilon$, $||\E{W^{\ge R}}||_1 = \Theta(N)$ for any fixed $R$, whereas for fixed $\rho<1$, $||\E{W^{\ge R}}||_1 = \Theta(1)$ as $N\rightarrow\infty$.
\end{remark}

\subsection{Preliminary results about products of matrices}

We begin with two lemmas. We show that convex combinations of matrices close to a fixed matrix can be written as a single matrix close to that fixed matrix. Such matrices will appear as expected multiplicative increments for a discrete-time process, and this lemma allows us the control the increments across multiple time-steps. We then state a lemma saying that applying a large enough product of matrices close to a fixed positive matrix to any vector has direction close to the principal eigenvector of the fixed matrix.

\par
The combination of these results allows us to control the expected \emph{proportion} of types at some large radius from a fixed vertex in an inhomogeneous random graph, irrespective of the expected \emph{number} of vertices at this radius. Considering all distances from the fixed vertex simultaneously proves the required concentration result for the proportion of types in a typical large component, and all the estimates hold uniformly among graphs with bounded Perron root.

\begin{definition}For $A\in\R^{k\times k}_+$, and $\theta>0$, define
\begin{equation}\label{eq:defnBthetaA}\mathbb{B}_\theta(A):= \{B\in \R_+^{k\times k} \, :\, |B_{i,j}-A_{i,j}|\le \theta,\,\forall i,j\in[k] \},\end{equation}
the set of positive matrices whose entries differ from those of $A$ by at most $\theta$.
\end{definition}

\begin{lemma}\label{mixmatrices}
Given $A\in \R_+^{k\times k}$, $L\in\N$, and $\theta>0$ such that $\min_{i,j} A_{i,j}>\theta$, consider non-negative non-zero vectors $x^{(1)},\ldots,x^{(L)}\in \R_{\ge 0}^k\backslash\{0\}$, and any $L$ matrices
$$D^{(1)},\ldots,D^{(L)}\in\mathbb{B}_\theta(A),$$
and positive real numbers $p_1,\ldots,p_L$ satisfying $\sum p_l=1$. Then there exists a matrix $\bar D\in\mathbb{B}_\theta(A)$ such that,
\begin{equation}\label{eq:meanBetaA}p_1 x^{(1)}D^{(1)}+\ldots+ p_L x^{(L)}D^{(L)} = (p_1x^{(1)}+\ldots +p_L x^{(L)}) \bar D.\end{equation}

\begin{proof}
Let $\mathbf{1}$ be the $k\times k$ matrix where every entry is 1. Since the $x^{(i)}$s are non-negative, and each $D^{(l)}\in \mathbb{B}_\theta(A)$,
\begin{align*}
p_1 x^{(1)}D^{(1)}+\ldots+ p_L x^{(L)}D^{(L)}&\le p_1x^{(1)}(A+\theta \mathbf{1}) + \ldots + p_L x^{(L)}(A+\theta \mathbf{1})\\
&\le (p_1x^{(1)}+\ldots+ p_L x^{(L)}) (A+\theta \mathbf{1}).
\intertext{Similarly,}
p_1 x^{(1)}D^{(1)}+\ldots +p_L x^{(L)}D^{(L)}&\ge  (p_1x^{(1)}+\ldots + p_L x^{(L)}) (A-\theta \mathbf{1}).
\end{align*}

For ease of notation, set $y:= p_1 x^{(1)}D^{(1)}+\ldots+ p_L x^{(L)}D^{(L)}$ and $z:=p_1x^{(1)}+\ldots+ p_L x^{(L)}$, so \begin{equation}\label{eq:meanvector}z(A-\theta\mathbf{1})\le y\le z(A+\theta \mathbf{1}).\end{equation}
Now, for each $j\in[k]$, set $c_j:= \frac{y_j - [zA]_j}{||z||_1}$, so that $\sum_{i=1}^k z_i(A_{i,j}+c_j) = y_j$. Since the LHS is increasing in $c_j$, from \eqref{eq:meanvector} we have $|c_j|\le \theta$.

\par
So we may define $\bar D\in \mathbb{B}_\theta(A)$ via $\bar D_{i,j}=A_{i,j}+c_j$, and this satisfies \eqref{eq:meanBetaA}.
\end{proof}
\end{lemma}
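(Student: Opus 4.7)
The plan is to reduce the vector/matrix equation to a column-by-column scalar problem, and then to look for a particularly simple form of $\bar D$ that has only one degree of freedom per column.

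Writing $z:=\sum_{l=1}^L p_l x^{(l)}$ and $y:=\sum_{l=1}^L p_l x^{(l)} D^{(l)}$, both row vectors in $\R^k$, the conclusion amounts to producing a matrix $\bar D\in\mathbb{B}_\theta(A)$ solving the $k$ separate scalar equations $\sum_i z_i \bar D_{i,j} = y_j$ for $j\in[k]$. Since $z$ is non-negative and each $D^{(l)}$ satisfies entrywise $A-\theta\mathbf{1}\le D^{(l)}\le A+\theta\mathbf{1}$, convex combination gives the crucial sandwich bound
\[ z(A-\theta\mathbf{1}) \;\le\; y\;\le\; z(A+\theta\mathbf{1}), \]
componentwise, where $\mathbf{1}$ is the all-ones matrix.

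Now I would exploit the fact that for each $j$ we only have one scalar equation to satisfy, whereas $\bar D$ offers $k$ free entries per column. The natural ansatz is therefore to take $\bar D$ to be a perturbation of $A$ that is \emph{constant within each column}: $\bar D_{i,j} = A_{i,j} + c_j$. Substituting into $\sum_i z_i \bar D_{i,j} = y_j$ and isolating $c_j$ gives
\[ c_j \;=\; \frac{y_j - [zA]_j}{\|z\|_1}, \]
which is well-defined because $z\neq 0$ (each $x^{(l)}$ is non-zero and non-negative, and $p_l>0$). The sandwich bound on $y$ translates directly to $-\theta\|z\|_1 \le y_j - [zA]_j \le \theta\|z\|_1$, so $|c_j|\le \theta$ and hence $\bar D\in\mathbb{B}_\theta(A)$ as required; the hypothesis $\min_{i,j}A_{i,j}>\theta$ then guarantees $\bar D$ is strictly positive, consistent with the ambient assumption that $\mathbb{B}_\theta(A)\subset\R_+^{k\times k}$.

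There is no substantive obstacle: the only thing to check is that the ansatz has enough flexibility to cover all right-hand sides permitted by the hypotheses, which is exactly what the sandwich bound provides. Writing the proof out is essentially a bookkeeping exercise of verifying the two inequalities entrywise and solving the resulting linear equation.
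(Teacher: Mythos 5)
Your proof is correct and follows the same route as the paper's: you establish the same componentwise sandwich bound on $y$ and $z$, then use the same column-constant ansatz $\bar D_{i,j}=A_{i,j}+c_j$ with $c_j=(y_j-[zA]_j)/\|z\|_1$. The only small addition you make (verifying positivity of $\bar D$ from $\min_{i,j}A_{i,j}>\theta$) is left implicit in the paper but is a reasonable sanity check.
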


The following lemma studies large products of matrices close to a fixed matrix (such as $\kappa\circ\pi$). It shows that that the images of all vectors under such a matrix product are close to the principal eigenspace of the fixed matrix. Because $\kappa\circ\pi$ could be non-diagonalisable, the proof is rather involved, and is postponed to Section \ref{Perronproofs}.
\begin{lemma}\label{convevector}
For all $0<\eta<T<\infty$ with $\eta<1$, and $\delta>0$, there exists $\theta=\theta(\delta,\eta,T)\in(0,\eta^2)$ and $R=R(\delta,\eta,T)<\infty$ such that
\begin{equation}\label{eq:prodapproxmu}\left|\left| \frac{v D^{(1)}\ldots D^{(R)}}{||v D^{(1)}\ldots D^{(R)}||_1} - \mu(\kappa\circ\pi) \right|\right|_1 <\delta,\end{equation}
for all $v \in\R^k_{\ge 0}\backslash\{0\}$, $\kappa\in[\eta,T]^{k\times k}$, $\pi\in \Pi_{\le 1}\cap[\eta,1]^k$, and $D^{(1)},\ldots,D^{(R)}\in \mathbb{B}_\theta(\kappa\circ \pi)$.
\end{lemma}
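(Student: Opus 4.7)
The plan is to decouple the problem into an unperturbed Perron--Frobenius statement for powers of $A := \kappa\circ\pi$, followed by a perturbation estimate valid when $\theta$ is small. As $\kappa,\pi$ range over $[\eta,T]^{k\times k}$ and $\Pi_{\le 1}\cap[\eta,1]^k$, the matrix $A$ varies over a compact set $\mathcal{A}$ of strictly positive matrices with entries in $[\eta^2,T]$. By Perron--Frobenius, $\rho(A)$ is a simple eigenvalue strictly larger in modulus than all others; moreover $\rho(A)$, $\mu(A)$, the normalized right-eigenvector $u(A)\in\R_{>0}^k$, and the ratio of the second-largest to the largest eigenvalue moduli depend continuously on $A$. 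Compactness of $\mathcal{A}$ then supplies a uniform spectral gap $\gamma<1$ together with uniform positive lower bounds on the components of $\mu(A)$ and $u(A)$.

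For the unperturbed case $D^{(i)}\equiv A$, the spectral decomposition
\[
vA^n = \rho(A)^n (v\cdot u(A))\,\mu(A) + vR_n(A),
\]
with $\|R_n(A)\|\le C_A\, n^{k-1}(\gamma\rho(A))^n$ (the polynomial factor accommodating any Jordan structure of $A$ off the Perron direction), implies that $vA^n/\|vA^n\|_1$ tends to $\mu(A)$. For any nonzero $v\ge 0$, strict positivity of $u(A)$ gives $v\cdot u(A)\ge c_1(\eta,T)\|v\|_1$, and $C_A$ is bounded on $\mathcal{A}$. Hence we may choose $R=R(\delta,\eta,T)$ large enough that
\[
\left\|\frac{vA^R}{\|vA^R\|_1}-\mu(A)\right\|_1 < \delta/2
\]
uniformly in $v\in\R_{\ge 0}^k\setminus\{0\}$ and $A\in\mathcal{A}$.

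For the perturbation, fix this $R$ and telescope
\[
D^{(1)}\cdots D^{(R)} - A^{R} = \sum_{i=1}^{R} D^{(1)}\cdots D^{(i-1)}(D^{(i)}-A)A^{R-i}.
\]
With $\theta<\eta^2\le 1$, each factor has entries bounded by $T+1$, so left-multiplication by each has $\ell_1\!\to\!\ell_1$ operator norm at most $k(T+1)$; combined with $\|D^{(i)}-A\|\le k\theta$, this yields
\[
\|vD^{(1)}\cdots D^{(R)} - vA^{R}\|_1 \le \|v\|_1\cdot R\,k\theta\,(k(T+1))^{R-1}.
\]
From the previous step (noting $\rho(A)\ge k\eta^2$) we also have $\|vA^{R}\|_1 \ge c_2(\eta,T,R)\|v\|_1$. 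The elementary inequality $\|x/\|x\|_1 - y/\|y\|_1\|_1 \le 2\|x-y\|_1/\min(\|x\|_1,\|y\|_1)$ then gives a bound $C(\eta,T,R)\theta$ on the $\ell_1$ distance between the two normalized vectors; choosing $\theta=\theta(\delta,\eta,T)\in(0,\eta^2)$ small enough makes this smaller than $\delta/2$, and the triangle inequality concludes.

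The main subtlety is the uniformity in Step~1 despite the possible non-diagonalizability of $A$; this is absorbed by the observation that the uniform spectral gap $\gamma<1$ on the compact set $\mathcal{A}$ dominates any polynomial prefactor $n^{k-1}$ coming from Jordan blocks, and that the uniform positive lower bound on $u(A)$ prevents any $v\ge 0$ from being nearly Perron-orthogonal. An alternative route that sidesteps the Jordan discussion entirely is via the Hilbert projective metric, in which every matrix with entries in $[\eta^2,T+\theta]$ acts as a contraction with Birkhoff coefficient uniformly bounded away from $1$, so Birkhoff contraction applies directly to the perturbed product $D^{(1)}\cdots D^{(R)}$ without the telescoping decomposition above.
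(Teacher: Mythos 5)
Your overall architecture --- an unperturbed Perron--Frobenius estimate for powers of $A=\kappa\circ\pi$, followed by a telescoping perturbation bound --- is exactly the one the paper follows, and your perturbation step (the telescope identity, the $\ell^1$ operator-norm bound, and the triangle inequality for normalized vectors) is essentially the paper's argument written in operator-norm rather than entry-wise form. The gap is in the unperturbed step. The assertion ``$C_A$ is bounded on $\mathcal{A}$'' is carrying the entire uniformity claim, and it does not follow from compactness of $\mathcal{A}$ together with the uniform spectral gap $\gamma<1$: the constant in a bound of the form $\|R_n(A)\|\le C_A\, n^{k-1}(\gamma\rho(A))^n$ reflects the conditioning of a (generalized) eigenbasis of $A$, which in general can degenerate as non-Perron eigenvalues collide or Jordan structure changes, even while the Perron gap stays bounded away from zero. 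The paper avoids this issue entirely by exploiting the symmetry of the kernel $\kappa$: $\kappa\circ\pi$ is conjugate, via $\mathrm{diag}(\sqrt{\pi_1},\ldots,\sqrt{\pi_k})$, to the real symmetric matrix $\kappa\bullet\pi$ with entries $\sqrt{\pi_i}\,\kappa_{i,j}\sqrt{\pi_j}$; since $\pi_i\in[\eta,1]$ the conjugating matrix is uniformly well-conditioned, so the spectral decomposition can be carried out with an orthonormal eigenbasis of $\kappa\bullet\pi$, there is no polynomial Jordan prefactor, and the uniformity is immediate. Your main line of argument does not observe or use this symmetry, so as written the key step is asserted rather than proved. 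It can be repaired either by the symmetrization trick just described, by a finite-subcover argument on the powers $\|Q(A)^n\|$ over the compact set $\mathcal{A}$, or --- most cleanly --- by the Birkhoff/Hilbert-metric contraction you mention at the end, whose contraction coefficient for matrices with entries in a fixed box $[\eta^2/2,\,T+1]$ is given by an explicit uniform formula and sidesteps spectral conditioning entirely. But the Birkhoff route is only gestured at, not developed, so the proposal as presented has a genuine gap.
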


\subsection{Proof of Theorem \ref{typecontrolthm}}

\begin{proof}

We may insist $T>2$, and first choose any $\epsilon>0$ small enough that $M(\eta)$ as defined in Proposition \ref{LDPprop} and $\theta(\delta,\eta,T)$ as defined in Lemma \ref{convevector} satisfy
\begin{equation}\label{eq:chithetacondition}M(\eta)\epsilon T^2\le \theta(\delta,\eta,T)<\eta^2.\end{equation}
Set $\chi=M\epsilon$. Assume throughout that we have a graph $G^N(p,\kappa)$ satisfying the conditions of the statement. For each $j\in[k]$, conditional on $(W^0,W^1,\ldots,W^r$), $W^{r+1}_j$ has distribution
\begin{equation}\label{eq:distWrplus1}\Bin{p_j-(W^0+W^1+\ldots+W^r)_j}{1-e^{-(W^r \kappa)_j /N}}.\end{equation}

The first parameter counts type $j$ vertices in the graph that are not within distance $r$ from $v$. For each of these vertices independently, the probability that it is connected to none of the vertices at distance $r$ from $v$ is $\prod_{i=1}^k (e^{-\kappa_{i,j}/N})^{W^r_i} = e^{-(W^r \kappa)_j /N}$. So,
\begin{align*}
\E{W^{r+1}_j \,\big|\, W^0,\ldots,W^r}&= \left[p_j-(W^0+W^1+\ldots+W^r)_j\right]\cdot \left( 1-e^{-(W^r \kappa)_j /N} \right)\\
&= \left[\pi_j - \frac{(W^0+\ldots+W^r)_j}{N}\right] \cdot N\left( 1-e^{-(W^r \kappa)_j /N} \right)\\
&\le \pi_j [W^r \kappa]_j = [W^r (\kappa\circ \pi)]_j.
\end{align*}
And so we conclude that
\begin{equation}\E{W^{r+1} \,\big|\, W^0,\ldots,W^r} \le W^r (\kappa\circ \pi). \label{eq:matrixincrbd1}\end{equation}
Define the matrix $D^{(r)}\in \R^{k \times k}$ by
$$D^{(r)}_{i,j}=(\kappa\circ \pi)_{i,j} - \frac{1}{||W^r||_1}\left[ W^r(\kappa\circ \pi) - \E{W^{r+1} \,\big|\, W^0,\ldots,W^r}\right]_j.$$
So we may write this conditional expectation as
\begin{equation}\label{eq:Wmgale}\E{W^{r+1}\,|\, W^0,\ldots,W^r} = W^rD^{(r)}.\end{equation}

\par
We define $S^r:= ||W^0+\ldots+W^r||_1$ to be the total number of vertices within radius $r$ of the root and recall that $x-x^2/2\le 1-e^{-x}$ for $x\ge 0$. We can then derive a bound in the opposite direction to \eqref{eq:matrixincrbd1}.
$$\E{W^{r+1}_j \,\big|\, W^0,\ldots,W^r}\ge \left[\pi_j- \frac{S^r}{N}\right]\left( [W^r \kappa]_j - \frac{([W^r \kappa]_j)^2}{2N} \right).$$

Recall $\chi=M\epsilon$. When $S^r\le \chi N$, since $\kappa\in[\eta,T]^{k\times k}$ we have
\begin{align}
0\le \left(\kappa\circ \pi - D^{(r)}\right)_{i,j}&\le \frac{1}{||W^r||_1}\left [ \frac{S^r}{N}[W^r\kappa]_j + \pi_j \frac{([W^r \kappa]_j)^2}{2N}  \right]\nonumber\\
0\le \left(\kappa\circ \pi - D^{(r)}\right)_{i,j}&\le \chi T + \frac{\chi T^2}{2}\le \chi T^2\le \theta,\label{eq:matrixincrbd2}
\end{align}
for all $i,j\in[k]$, from the assumptions \eqref{eq:chithetacondition} made at the start of the proof.

\par
Now fix some $r$ between 0 and $N-R-1$. We combine all the previous ingredients to show that $\E{W^{r+R}}$ has direction within $\delta$ of $\mu(\kappa\circ \pi)$. We first define a version of process $W$ for which the matrices governing the expected one-step evolution of $W$ are always within $\mathbb{B}_{\theta}(\kappa\circ \pi)$. We do this to show that the contributions from the rare event $\{|C(v)|>\chi N\}$ are negligible as $N\rightarrow\infty$.

\par
Let $\tilde W^r=W^r$. Then, inductively, for $m=0,1,\ldots,R-1$, let
\begin{equation}\label{eq:defntildeW}\tilde W^{r+m+1}= \begin{cases}W^{r+m+1} &\quad S^{r+m}\le \chi N\\ \tilde W^{r+m}(\kappa\circ \pi) &\quad S^{r+m}>\chi N.\end{cases}\end{equation}
That is, $\tilde W$ tracks $W$ until the first time that $S$ exceeds $\chi N$, and thereafter evolves deterministically, with transitions given by right-multiplication with $(\kappa\circ \pi)$. Later we will be particularly interested in $\tilde W^{r+R}$ as $r$ varies, so we let $Y^r:=\tilde W^{r+R}$. (Note that for different values of $r$, $(\tilde W^{r+m})_{m\ge 0}$ are formally \emph{different} processes.)

\par
For $m\ge 0$, define $\F_{r+m}:=\sigma(W^0,\ldots,W^{r+m})$. So, in particular
$$(W^0, W^1,\ldots,W^r, \tilde W^{r+1},\ldots,\tilde W^{r+m})$$
is $\F_{r+m}$-measurable. Because of \eqref{eq:Wmgale} and \eqref{eq:defntildeW}, we have
\begin{equation}\label{eq:tildemgale}\E{\tilde W^{r+m+1} \,\big|\, W^0,\ldots, W^r, \tilde W^{r+1},\ldots, \tilde W^{r+m}} = \tilde W^{r+m}D^{(r+m)},\end{equation}
where $D^{(r+m)}$ is $\F_{r+m}$-measurable. On the $\F^{r+m}$-measurable event $\{S^{r+m}>\chi N\}$, $D^{(r+m)}=\kappa\circ\pi$, and otherwise $D^{(r+m)}\in \mathbb{B}_{\theta} (\kappa \circ \pi)$ from \eqref{eq:matrixincrbd2}. Therefore $D^{(r+m)}\in \mathbb{B}_{\theta} (\kappa \circ \pi)$ almost surely. We will now show that expected $R$-step transitions are given by a product of $R$ matrices in a similar way, using Lemma \ref{mixmatrices}.

\par
\emph{Claim:} For any $1\le m\le R$, there exist $\F_r$-measurable matrices $D^{(1)},\ldots,D^{(m)}\in \mathbb{B}_\theta(\kappa\circ \pi)$ such that
\begin{equation}\label{eq:inductmatrices}\E{\tilde W^{r+m} \,|\, \F_r} = W^r D^{(1)}\ldots D^{(m)}.\end{equation}

We prove the claim by induction on $m$. Suppose the claim is true for a particular value of $m$. Clearly $\supp{\tilde W^{r+m}}$ is finite, and for each $w\in\supp{\tilde W^{r+m}}$ by \eqref{eq:tildemgale}, we have (after a superficial change of notation - recall $r$ is currently fixed)
$$\E{\tilde W^{r+m+1} \,\big|\, W^0,\ldots, W^r, \tilde W^{r+1},\ldots, \tilde W^{r+m-1}, \tilde W^{r+m}=w} = w\bar D^{(m+1)},$$
where $\bar D^{(m+1)}$ is $\F_{r+m}$-measurable and in $\mathbb{B}_\theta(\kappa\circ \pi)$. So
$$\E{\tilde W^{r+m+1} \,\big|\, W^0,\ldots, W^r, \tilde W^{r+m}=w} = w \E{\bar D^{(m+1)} \,\big|\,W^0,\ldots, W^r, \tilde W^{r+m}=w}.$$
The expectation on the RHS is a convex combination of elements of the convex set $\mathbb{B}_\theta(\kappa\circ \pi)$. So $D^{(m+1,w)}:=\E{\bar D^{(m+1)} \,\big|\,W^0,\ldots, W^r, \tilde W^{r+m}=w}$ is $\F_r$-measurable, and is almost surely in $\mathbb{B}_\theta(\kappa\circ \pi)$.

\par
We now apply the tower law:
\begin{align*}
\E{\tilde W^{r+m+1} \,|\, \F_r} &= \sum_{w\in\supp{\tilde W^{r+m}}}\E{\tilde W^{r+m+1}\,| \, W^0,\ldots,W^r,\tilde W^{r+m}=w}\Prob{\tilde W^{r+m}=w\,\big|\,\F_r}\\
&= \sum_{w\in\supp{\tilde W^{r+m}}} w D^{(m+1,w)} \Prob{\tilde W^{r+m}=w\,\big|\,\F_r}.
\end{align*}
So by Lemma \ref{mixmatrices}, there exists an $\F_r$-measurable matrix $D^{(m+1)}\in \mathbb{B}_\theta(\kappa\circ \pi)$ such that
\begin{align*}
\E{\tilde W^{r+m+1} \,|\, \F_r}&= \left(\sum_{w\in\supp{\tilde W^{r+m}}} w\Prob{\tilde W^{r+m}=w\,\big|\,\F_r}\right) D^{(m+1)}\\
&= \E{\tilde W^{r+m} \,|\, \F_r} D^{(m+1)},\\
\intertext{a conditional version of \eqref{eq:Wmgale}. Then, using the assumed inductive hypothesis,}
\E{\tilde W^{r+m+1} \,|\, \F_r}&= W^r D^{(1)}\ldots D^{(m)}D^{(m+1)}.
\end{align*}
The claim \eqref{eq:inductmatrices} follows for all $m\le R$ by induction. In particular, the case $m=R$ gives
\begin{equation}\label{eq:WrplusR}\E{\tilde W^{r+R} \,|\, \F_r} = W^r D^{(1)}\ldots D^{(R)}.\end{equation}

\par
Since each $D^{(m)}\in\mathbb{B}_{\theta}(\kappa\circ\pi)$, we now have precisely the conditions to use Lemma \ref{convevector}, whenever $W^r\ne 0$. Fix $\delta>0$, and also note that $\kappa\circ \pi\in[\eta^2,T]^{k\times k}$ by assumption. The lemma specifies $R=R(\delta/2,\eta,T)$ and we conclude that
$$\left|\left|\E{\tilde W^{r+R}\,|\,\F^r} - \mu(\kappa\circ \pi)||\mathbb{E}[\tilde W^{r+R}\,|\,\F^r]||_1 \right|\right|_1\le \frac{\delta}{2}\left|\left|\E{\tilde W^{r+R}\,|\,\F^r}\right|\right|_1,$$
almost surely (including the trivial case $W^r=0$), and in particular,
\begin{equation}\label{eq:Yrdirbd}\left|\left|\E{\tilde W^{r+R}} - \mu(\kappa\circ \pi)||\mathbb{E}[\tilde W^{r+R}]||_1 \right|\right|_1\le \frac{\delta}{2}\left|\left|\E{\tilde W^{r+R}}\right|\right|_1.\end{equation}

Recall that $r$ was fixed throughout, and $Y^r:=\tilde W^{r+R}$. In particular, if $|C(v)|\le \chi N$, then $Y^r=W^{r+R}$. Now we may sum \eqref{eq:Yrdirbd} over $r$.

\begin{equation}\label{eq:Ydirbd}\left|\left| \E{\sum_{r=0}^{N-R-1} Y^r} - \mu(\kappa\circ\pi) \left|\left| \E{\sum_{m=0}^{N-R-1} Y^r}\right|\right|_1 \right|\right|_1 \le \frac{\delta}{2} \left|\left|\E{\sum_{r=0}^{N-R-1}Y^r}\right|\right|_1.\end{equation}

By considering \eqref{eq:WrplusR} in the case $r=0$, we have
\begin{equation}\label{eq:lowerboundsumYr}0<(\eta^2-\theta)^R\le\E{Y^0}\le \left|\left|\E{\sum_{r=0}^{N-R-1}Y^r}\right|\right|_1.\end{equation}

We now deal with the case when $|C(v)|>\chi N$. By construction, we have the very crude bound,
$$||W^R+W^{R+1}+\ldots +W^{N-1}||_1 \le N.$$

Then, for each $r$, there are $R+1$ possibilities for the value of $Y^r=\tilde W^{r+R}$ in terms of $W$, depending on when $S^{r+m}$ first exceeds $\chi N$, as given by \eqref{eq:defntildeW}. So we have another crude bound,
$$Y^r\le  W^{r+R} + W^{r+R-1}(\kappa\circ \pi) +\ldots + W^r (\kappa\circ \pi)^R,$$
since all of these quantities are non-negative. Furthermore, since all entries of $\kappa\circ \pi$ are at most $T$, we obtain,
$$\left|\left|\sum_{r=0}^{N-R-1}Y^r \right|\right|_1 \le (1+(kT)+\ldots + (kT)^R ) N \le (kT)^{R+1} N.$$

Therefore, both the following hold:
\begin{align}
\left|\left| \E{\sum_{r=0}^{N-R-1} Y^r} - \E{\sum_{r=R}^{N-1} W^r} \right|\right|_1 &\le \left(1+(kT)^{R+1}\right) N \Prob{|C(v)|>\chi N},\label{eq:sumYvssumW}\\
\left|\left| \E{\sum_{r=0}^{N-R-1} Y^r} - \E{\mathds{1}_{A_\chi}\sum_{r=0}^{N-R-1} Y^r} \right|\right|_1 &\le \left(1+(kT)^{R+1}\right) N \Prob{|C(v)|>\chi N}.\label{eq:sumYvssum1AY}
\end{align}

By Proposition \ref{LDPprop} the RHS of \eqref{eq:sumYvssumW} is, for large enough $N$, much smaller than all the terms in \eqref{eq:Ydirbd} (recall from \eqref{eq:lowerboundsumYr} that we have a positive lower bound on the RHS of \eqref{eq:Ydirbd}), and so we may replace $\E{\sum_{r=0}^{N-R-1} Y^r}$ with $ \E{\sum_{r=R}^{N-1} W^r}$ to conclude
\begin{equation}\label{eq:Wdirbd}\left|\left| \E{\sum_{r=R}^{N-1} W^r} - \mu(\kappa\circ\pi) \left|\left| \E{\sum_{r=R}^{N-1} W^r}\right|\right|_1 \right|\right|_1 \le \delta \left|\left|\E{\sum_{r=R}^{N-1} W^r}\right|\right|_1,\end{equation}
for $N\ge N_0=N_0(\delta,\eta,T)\in\N$, as required for \eqref{eq:propsbeyondR}, since $\sum_{r=R}^{N-1}W^r = W^{\ge R}(v)$.

We apply a similar argument using \eqref{eq:sumYvssum1AY} to conclude \eqref{eq:types<chi}.
\end{proof}

\section{Proof of Theorem \ref{weaklimitthm}}\label{weaklimitsection}

Theorem \ref{weaklimitthm} concerns a family of frozen percolation processes $(\mathcal{G}^{N,p^N,\kappa,\lambda(N)}(t))_{t\ge 0}$ with $k$ types, for which $||p^N||_1=N$, and $p^N/N\stackrel{d}\rightarrow \pi(0)$. In addition, $\lambda(N)$ satisfies the critical scaling \eqref{eq:lambdascaling}. From now on, we abbreviate as $\mathcal{G}^N(t)$. We also have $\pi^N(t)$ and $\Phi^N(t)$ defined as before. Defining
$$v^N_\ell(t):= \frac{1}{N}\#\left\{\text{alive vertices in }\mathcal{G}^N(t)\text{ with component size }\ell \right\},$$
we obtain from Proposition \ref{BJRweaklimit} and Scheff\'e's Lemma that $v^N(0)\stackrel{d}\rightarrow v(0)$ in $\ell_1$, where $v_\ell(0)=\Prob{|\Xi^{\pi(0),\kappa}|=\ell}$. Then Theorem \ref{FPconvtosmol} shows that $\Phi^N\rightarrow\Phi$ in distribution uniformly on $[0,T]$. Here, the total-mass function $\Phi$ satisfies $\Phi(t)=1$ for $t\in[0,T_{\mathrm{gel}}]$ and $\Phi$ is strictly decreasing and uniformly continuous on $(T_{\mathrm{gel}},\infty)$. Note that $T_{\mathrm{gel}}$ depends on $\kappa$ and $\pi(0)$ through $v(0)$ using \eqref{eq:Tgel}.

We will show that $\pi^N$ has a limit $\pi$, which satisfies the conditions to be a FP type flow with initial kernel $\kappa$, initial distribution $\pi(0)$, and critical time $t_c=T_{\mathrm{gel}}$. Since we may construct a suitable family of frozen percolation processes with $k$ types for any $\pi(0)$ and $\kappa$, this will also complete the proof of Theorem \ref{DEtheorem}. (Recall that so far we have only shown the uniqueness result as Proposition \ref{uniqueprop}.)

\subsubsection*{Outline of argument}
First we check that the sequence of processes $(\pi^N(\cdot))$ is tight in $\mathbb{D}^k([0,T])$, and that every component of any weak limit is bounded away from zero. We deduce from Theorem \ref{FPconvtosmol} that weak limits are continuous and after $t_c$ are strictly decreasing and critical. We will argue that seeing supercritical periods give rise to jumps, and subcritical periods are locally constant in the weak limits, neither of which is allowed.

\par
Finally, we show that any weak limit satisfies the equation \eqref{eq:flowDE}. Our argument will be that in the limit the majority of mass is lost as a result of freezing large components, and the proportion of types within in a large component is well-approximated by the appropriate left-eigenvector, precisely as shown in Theorem \ref{typecontrolthm}.

\subsection{Tightness and simple properties of weak limits}
Throughout this section, we assume $T>0$, and that both the initial kernel $\kappa$ and the initial distribution $\pi(0)$ are fixed.

\subsubsection{Tightness and Theorem \ref{DEtheorem}}
Note that each $\pi^N$ is cadlag, and non-increasing, and $\pi^N(0)$ lies in a compact set, since it satisfies $||\pi^N_i(0)||_1=1$. It follows that the set of possible trajectories of any $(\pi^N(t))_{t\in[0,T]}$ is compact in $\mathbb{D}^k([0,T])$, and so certainly the sequence of processes $(\pi^N(\cdot))$ is tight.

\par
Therefore, $(\pi^N(\cdot))$ has weak limits. The remainder of this proof of Theorem \ref{weaklimitthm} establishes that any such weak limit satisfies the conditions of Definition \ref{defntypeflow} to be a frozen percolation type flow with the correct initial conditions. As a result, the full statement of Theorem \ref{DEtheorem} follows from Section \ref{uniquesection} and the proof of Theorem \ref{weaklimitthm} to follow in this section.

\subsubsection{Before $t_c$}
From now on, let $\pi$ be any weak limit of $(\pi^N)$ in $\mathbb{D}^k([0,T])$ as $N\rightarrow\infty$. By assumption $\pi^N(0)\stackrel{d}\rightarrow \pi(0)$, so the two meanings of $\pi(0)$ are consistent!

We know that $\Phi^N\stackrel{d}\rightarrow 1$ on $[0,t_c]$. Therefore, since for each $i\in[k]$, $\pi_i^N$ is non-increasing, the same must be true for each $\pi_i$. Therefore $\sum_{i\in[k]}\pi_i^N (t)\stackrel{d}\rightarrow 1$ for $t\in[0,t_c]$ implies $\pi^N(t)\stackrel{d}\rightarrow \pi(t)$ for the same range of $t$. In particular, any weak limit $\pi$ satisfies $\pi(t)=\pi(0)$ for $t\in[0,t_c]$, as required.

\subsubsection{Continuity}
Again, we know $\Phi^N\stackrel{d}\rightarrow \Phi$, which is continuous. Any weak limit $\pi$ must satisfy $||\pi(t)||_1=\Phi(t)$ for $t\in[0,T]$, and every component $\pi_i(t)$ is non-increasing with $t$. Therefore, if with positive probability, for some $i\in[k]$, $\pi_i(\cdot)$ has a (downward) jump, so does $\Phi(\cdot)$. This is a contradiction, and thus $\pi(\cdot)$ is almost surely continuous.

\subsubsection{Lower bounds on $\pi^N(T)$}
As in the analysis of type flows, in order to use the Lipschitz condition, it is convenient to show the following lemma, which asserts that the proportion of alive vertices of each type is bounded below in probability uniformly on compact time intervals.

\begin{lemma}\label{defnetalemma}For any $T>0$, there exists $\eta=\eta(T)>0$ such that
\begin{equation}\label{eq:notypetoosmall}\lim_{N\rightarrow\infty}\Prob{\exists i\in[k]\text{ s.t. }\pi_i^N(T)<\eta}=0.\end{equation}
\begin{proof}
We consider the proportion of isolated alive vertices of type $i$ in the frozen percolation process, as a lower bound on the proportion of all alive vertices of type $i$. We use a second-moment method, under a coupling with the classical Erd\H{o}s--R\'enyi dynamics with no freezing.

\par
Each possible edge carries an exponential clock with parameter $1/N$. Because of the dynamics of the frozen percolation process, sometimes we do not add the edge when the corresponding clock rings, because at least one of the incident vertices is already frozen. We say a vertex $v$ is \emph{highly isolated} at time $T$ if it was isolated in $\mathcal{G}^N(0)$, and none of the $N-1$ clocks on edges incident to $v$ ring before time $T$. Certainly if a vertex is highly isolated, then it is also isolated, provided it is alive.

\par
Let $v$ be a uniformly chosen vertex in $[N]$, and let $\mathcal{H}^N_v(T,i)$ be the event that $v$ has type $i$, and is alive and highly isolated at time $T$ in $\mathcal{G}^N(T)$. For $\mathcal{H}^N_v(T,i)$ to hold, $v$ must be assigned type $i$; and $v$ must be isolated in the initial graph $\mathcal{G}^N(0)$; and none of the $N-1$ clocks on edges incident to $v$ may ring before time $T$; and $v$ must not be struck by lightning. So
$$\Prob{\mathcal{H}^N_v(T,i)\,\big|\, \pi^N(0)} =  \pi^N_i(0) \left( \prod_{j=1}^k \exp\left(-\kappa_{i,j}\left[ \pi^N_j(0)-\tfrac{1}{N}\mathds{1}_{\{i=j\}}\right]  \right) \right)\cdot \left( e^{-T/N}\right)^{N-1} \cdot e^{-\lambda(N)T},$$
and since $\pi^N(0)\stackrel{d}\rightarrow \pi(0)$ as $N\rightarrow\infty$, we have
$$\Prob{\mathcal{H}^N_v(T,i)}\rightarrow \pi_i(0)\alpha_i,\qquad\text{where }\alpha_i:= \exp\left( -T-\sum_{j=1}^k \left[\kappa\circ \pi(0)\right]_{i,j} \right).$$

Now let $v,w$ be a uniformly chosen pair of distinct vertices in $[N]$, and let $\mathcal{H}^N_{v,w}(T,i)$ be the event that both $v$ and $w$ have type $i$, and are alive and highly isolated at time $T$. Similarly,
\begin{align*}
\Prob{\mathcal{H}^N_{v,w}(T,i)\,\big|\, \pi^N(0)} &= \pi^N_i(0)\left[ \pi^N_i(0) - \tfrac{1}{N}\right] \left( \prod_{j=1}^k \exp\left(-\kappa_{i,j}\left[2\pi_j^N(0) - \tfrac{2}{N} \mathds{1}_{\{i=j\}} \right] \right) \right)\\
&\quad \times \left(e^{-T/N}\right)^{2N-3} \cdot e^{2\lambda(N) T},
\end{align*}
from which as before we have, as $N\rightarrow\infty$,
$$\Prob{\mathcal{H}^N_{v,w}(T,i)}\rightarrow \pi_i(0)^2 \alpha_i^2.$$

Now let $H^N(T,i)$ be the number of alive, highly isolated vertices with type $i$ in $\mathcal{G}^N(T)$. We have $\E{\frac{H^N(T,i)}{N}}\rightarrow \pi_i(0)\alpha_i$ and $\var{{\frac{H^N(T,i)}{N}}}\rightarrow 0$.

\par
So for any $\eta\in(0,\pi_i(0)\alpha_i)$, applying Chebyshev's inequality to $\frac{H^N(T,i)}{N}$,
$$\limsup_{N\rightarrow\infty}\Prob{\pi_i^N(T)<\eta}\le \limsup_{N\rightarrow\infty}\Prob{\tfrac{H^N(T,i)}{N}<\eta}\le \limsup_{N\rightarrow\infty} \frac{\var{{\frac{H^N(T,i)}{N}}}}{(\pi_i(0)\alpha_i-\eta)^2}=0.$$
The statement \eqref{eq:notypetoosmall} follows by taking $\eta<\min_i \pi_i(0)\alpha_i$.
\end{proof}

\end{lemma}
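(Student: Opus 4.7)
The plan is to exhibit for each type $i$ a quantitative lower bound on $\pi_i^N(T)$ by restricting attention to a particularly well-behaved subpopulation: vertices that have been \emph{forced} to remain isolated and alive throughout $[0,T]$. Concretely, I would call a vertex $v$ \emph{highly isolated} at time $T$ if (i) $v$ has no incident edges in $\mathcal{G}^N(0)$, (ii) none of the $N-1$ Poisson clocks governing the arrival of edges incident to $v$ ring during $[0,T]$, and (iii) the lightning clock at $v$ does not ring during $[0,T]$. Under (i)--(iii), $v$ cannot have been part of any component other than $\{v\}$, nor can it have been frozen, so $v$ is alive and isolated at time $T$. Thus the count $H^N(T,i)$ of type-$i$ highly isolated vertices is dominated by $N\pi_i^N(T)$ almost surely, and it suffices to show that $H^N(T,i)/N$ is concentrated around a positive constant.

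First I would compute $\mathbb{E}[H^N(T,i)/N]$ by picking a uniform vertex $v$ and conditioning on $\pi^N(0)$. The three events in (i)--(iii) are mutually independent given $\pi^N(0)$: event (i) occurs with probability $\prod_j \exp(-\kappa_{i,j}(\pi_j^N(0) - \tfrac{1}{N}\mathds{1}_{i=j}))$ by the edge-probability formula \eqref{eq:textGNkedges}, event (ii) with probability $(e^{-T/N})^{N-1}$, and event (iii) with probability $e^{-\lambda(N)T}$. Using $\pi^N(0) \stackrel{d}{\to} \pi(0)$ and bounded convergence, together with $\lambda(N) \to 0$, we get
\begin{equation*}
\lim_{N\to\infty}\Prob{v \text{ is alive, highly isolated, type }i} = \pi_i(0)\exp\!\Bigl(-T - \sum_{j=1}^k \kappa_{i,j}\pi_j(0)\Bigr) =: \pi_i(0)\alpha_i > 0.
\end{equation*}

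For the second moment, I would do the analogous computation for an ordered pair $(v,w)$ of distinct uniformly chosen vertices. The joint probability factors almost completely: edges within $\{v,w\}$ and edges incident to exactly one of them are distinct from those touching neither, the edge-arrival and lightning clocks at $v$ and $w$ are independent, and the extra correction from the edge $\{v,w\}$ itself is an $O(1/N)$ multiplicative factor. One obtains $\mathbb{E}[(H^N(T,i)/N)^2] \to (\pi_i(0)\alpha_i)^2$, so $\mathrm{Var}(H^N(T,i)/N) \to 0$.

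With these two moments in hand, Chebyshev's inequality delivers $H^N(T,i)/N \stackrel{\mathbb{P}}{\to} \pi_i(0)\alpha_i$. Choosing any $\eta < \min_{i\in[k]} \pi_i(0)\alpha_i$ (which is strictly positive since $\pi(0)$ is strictly positive and each $\alpha_i > 0$) gives $\Prob{\pi_i^N(T) < \eta} \le \Prob{H^N(T,i)/N < \eta} \to 0$ for each $i$, and a union bound over the finite set $[k]$ finishes the argument. The only subtle point to get right is the coupling justification in the first paragraph --- i.e.\ that the frozen-percolation dynamics can be constructed from the same Poisson clocks as the unfrozen Erd\H{o}s--R\'enyi dynamics plus independent lightning clocks, so that being highly isolated in the \emph{coupled} picture genuinely implies being alive and isolated in $\mathcal{G}^N(T)$. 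This is straightforward given the graphical construction of mean-field frozen percolation already used in the proof of Proposition \ref{GNtdistprop}.
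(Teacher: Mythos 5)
Your proposal is correct and follows essentially the same approach as the paper: the same notion of highly isolated vertices under the graphical coupling, the same first- and second-moment computations conditioning on $\pi^N(0)$, and the same application of Chebyshev's inequality with $\eta < \min_i \pi_i(0)\alpha_i$.
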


\subsection{Weak limits are critical after $t_c$}

We now show that for any weak limit $\pi$, the criticality condition $\rho(\kappa(t)\circ \pi(t))=1$ holds for all $t\ge t_c$. We first show that this eigenvalue cannot ever be greater than one, and then that it cannot be less than one. In both cases, the argument is by contradiction. If $\mathcal{G}^N(t)$ is ever supercritical, then with high probability giant components will be frozen, and so weak limits of $\Phi^N$ will not be continuous. If $\mathcal{G}^N(t)$ is subcritical, then not enough vertices will be frozen to ensure weak limits of $\Phi^N$ are strictly decreasing.

\subsubsection{Weak limits are never supercritical}

We want to control the size of the giant component, uniformly among relevant kernel-distribution pairs $(\pi,\kappa)$ for which $\rho(\kappa\circ\pi)\ge 1+\epsilon$. To reduce this problem to finite number of pairs, the following minorisation lemma will be useful.

\begin{lemma}\label{minorisation}
For any $0<\bar\Lambda<\Lambda$, and $K<\infty$ there exist $M\in \N$, and $\pi^{(1)},\ldots,\pi^{(M)}\in\Pi_{\le 1}$ and kernels $\kappa^{(1)},\ldots,\kappa^{(M)}\in\R_{\ge 0}^{k\times k}$ such that
\begin{itemize}
\item $\rho(\kappa^{(m)}\circ \pi^{(m)})=\bar \Lambda$ for each $m\in[M]$;
\item for any subdistribution $\pi\in\Pi_{\le 1}$ and kernel $\kappa\in[0,K]^{k\times k}$ with $\rho(\kappa\circ \pi)\ge \Lambda$, there is some $m\in[M]$ for which $\pi^{(m)}\le \pi$ and $\kappa^{(m)}\le \kappa$.
\end{itemize}
\end{lemma}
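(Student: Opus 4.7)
My plan is to produce the finite collection by a quantise-then-rescale argument: first discretise the parameter space on a grid fine enough that componentwise rounding-down preserves the eigenvalue lower bound $\bar\Lambda$, and then rescale each surviving grid point by a scalar in $(0,1]$ so that its Perron root equals exactly $\bar\Lambda$.

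First I would note that $Q := \Pi_{\le 1}\times [0,K]^{k\times k}$ is compact, and that $(\pi,\kappa)\mapsto \rho(\kappa\circ\pi)$ is continuous on $Q$ (since for non-negative matrices the spectral radius depends continuously on the entries, via the coefficients of the characteristic polynomial). Hence $\rho$ is uniformly continuous on $Q$, so I can choose $\delta>0$ such that any $\ell_\infty$-perturbation of $(\pi,\kappa)$ by at most $\delta$ changes $\rho(\kappa\circ\pi)$ by at most $(\Lambda-\bar\Lambda)/2$. Let $F\subset Q$ be the finite set of points whose coordinates are integer multiples of $\delta$, and for $(\pi,\kappa)\in Q$ denote by $(\pi^\downarrow,\kappa^\downarrow)\in F$ the componentwise floor; then $(\pi^\downarrow,\kappa^\downarrow)\le (\pi,\kappa)$ entrywise, the $\ell_\infty$ distance is at most $\delta$, and zero entries of $(\pi,\kappa)$ are preserved.

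If $\rho(\kappa\circ\pi)\ge \Lambda$, the uniform continuity estimate gives $\rho(\kappa^\downarrow\circ\pi^\downarrow)\ge (\Lambda+\bar\Lambda)/2>\bar\Lambda$, so $(\pi^\downarrow,\kappa^\downarrow)$ lies in the finite set $F_\Lambda:=\{(\pi^\sharp,\kappa^\sharp)\in F:\rho(\kappa^\sharp\circ\pi^\sharp)\ge\bar\Lambda\}$. For each $(\pi^\sharp,\kappa^\sharp)\in F_\Lambda$ I set $c^\sharp:=\sqrt{\bar\Lambda/\rho(\kappa^\sharp\circ\pi^\sharp)}\in (0,1]$ and take $(\pi^{(m)},\kappa^{(m)}):=(c^\sharp\pi^\sharp, c^\sharp\kappa^\sharp)$; since $(c\kappa)\circ(c\pi)=c^2(\kappa\circ\pi)$, this satisfies $\rho(\kappa^{(m)}\circ\pi^{(m)})=\bar\Lambda$ and $(\pi^{(m)},\kappa^{(m)})\le(\pi^\sharp,\kappa^\sharp)\le(\pi,\kappa)$. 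Indexing these pairs over $F_\Lambda$ yields the required finite list.

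The main obstacle, and what rules out a direct open-cover compactness argument at the level of $\{\rho=\bar\Lambda\}$, is that entries of $\pi$ or $\kappa$ may vanish: Perron--Frobenius monotonicity prevents any minorant with the same eigenvalue from being strictly smaller entrywise, so a closeness-based argument cannot succeed uniformly at entries close to zero. The quantisation step sidesteps this because zero entries round to zero and the uniform rescaling factor $c^\sharp\le 1$ never turns a zero entry into a positive one; uniform continuity of $\rho$ on $Q$ (which remains valid even at reducible matrices, where the Perron eigenvector need not be unique but the spectral radius is still continuous in the entries) is the only non-trivial analytic ingredient, and is what makes the rounding error manageable.
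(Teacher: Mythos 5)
Your proof is correct, and it takes a genuinely different route from the paper. The paper also argues by compactness, but at the level of the set $\mathbb{A}(K,\Lambda)=\{(\pi,\kappa):\rho(\kappa\circ\pi)\ge\Lambda\}$: it introduces a relation $\kappa\triangleright\kappa^0$ (strict entrywise inequality where $\kappa^0$ is positive, but merely $\ge 0$ where $\kappa^0$ vanishes), observes that $\{\kappa:\kappa\triangleright\kappa^0\}$ is open in the subspace topology on $\R_{\ge 0}^{k\times k}$, covers $\mathbb{A}(K,\Lambda)$ by neighbourhoods $N(\kappa^0,\pi^0)$ indexed by pairs with $\rho(\kappa^0\circ\pi^0)=\bar\Lambda$, and extracts a finite subcover. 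You instead work on the whole compact box $Q$, invoke uniform continuity of $\rho$ to get a rounding modulus $\delta$, floor to a $\delta$-grid (which preserves $\le$ and sends zero entries to zero), keep grid points with $\rho\ge\bar\Lambda$, and rescale each by $c^\sharp=\sqrt{\bar\Lambda/\rho}\in(0,1]$ using $\rho\bigl((c\kappa)\circ(c\pi)\bigr)=c^2\rho(\kappa\circ\pi)$. Both methods exploit the same scaling identity to hit $\bar\Lambda$ exactly and both must deal with the fact that zero entries of $(\pi,\kappa)$ force the minorant's corresponding entries to be zero; the paper handles this via the asymmetry built into $\triangleright$, you handle it by the floor map fixing zero. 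Your route is slightly more elementary (no custom relation, just a grid and uniform continuity), whereas the paper's avoids choosing a modulus of continuity explicitly.

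One minor correction to your closing remarks: you assert that vanishing entries \emph{rule out} a direct open-cover argument on $\{\rho=\bar\Lambda\}$. The paper in fact carries out exactly such an argument; the point you correctly identify (that strict entrywise domination fails at zero entries) is precisely what the paper's relation $\triangleright$ is designed to circumvent, by requiring strict inequality only at coordinates where the minorant is positive, which is still an open condition in the subspace topology on $\R_{\ge 0}^{k\times k}\times\Pi_{\le 1}$. Your concern is well-founded as a warning against a naive version, but it does not preclude the open-cover route. You may also want to record the degenerate case where the grid set $F_\Lambda$ is empty, which the paper dispatches explicitly.
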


The proof of this non-probabilistic lemma is postponed to Section \ref{majminsection}.

\begin{prop}\label{neversupercritprop}For any $\epsilon>0$,
\begin{equation}\label{eq:limnotsupercrit}\Prob{\sup_{t\in [0,T]} \rho(\kappa(t)\circ \pi(t)) \ge 1+\epsilon}=0.\end{equation}
\begin{proof}The principal eigenvalue $\rho(\cdot)$ is continuous. The kernel $\kappa(\cdot)$ is continuous, and we have shown that $\pi(\cdot)$ is almost surely continuous. On the event $\{\sup_{t\in [0,T]} \rho(\pi(t)\circ \kappa(t)) \ge 1+\epsilon\}$, either $\pi$ has a discontinuity, or there exists a time-interval of positive width, during which $\rho\ge 1+\epsilon/2$. So either \eqref{eq:limnotsupercrit} holds, or there exists a fixed time $s\in[0,T)$, and an infinite subsequence $\mathcal{N}\subset \N$ such that
\begin{equation}\label{eq:superc4contrad}\liminf_{\substack{N\rightarrow\infty\\ N\in \mathcal{N}}}\Prob{\rho\left(\kappa(s)\circ \pi^{N}(s)\right)\ge 1+\epsilon/2}> 0.\end{equation}

We assume \eqref{eq:superc4contrad} holds, and apply Lemma \ref{minorisation}. We obtain that there exist $M\in\N$ and $\pi^{(1)},\ldots,\pi^{(M)}\in \Pi_{\le 1}$ and kernels $\kappa^{(1)},\ldots,\kappa^{(m)}\in \R_{\ge 0}^{k\times k}$ such that $\rho(\kappa^{(m)}\circ \pi^{(m)})=1+\epsilon/3$, and for any $\pi\in\Pi_{\le 1}$ and $\kappa\in [0,\kappa_{\max}+T]^{k\times k}$ with $\rho(\kappa\circ \pi)\ge 1+\epsilon/2$, there exists $m\in[M]$ such that $\pi^{(m)}\le \pi$ and $\kappa^{(m)}\le \kappa$.

\par
Recall that $(\mathcal F^N(t))_{t\ge 0}$ is the natural filtration of $(\pi^N(t))$. In particular, the event $\{\rho(\kappa(s)\circ \pi^N(s))\ge 1+\epsilon/2\}$ is $\mathcal{F}^N(s)$-measurable. On this event, at least one of the events $\{\pi^{(m)}\le \pi^N(s)\}$ holds. From Proposition \ref{GNtdistprop}, conditional on $\mathcal{F}^N(s)$, up to labelling, $\mathcal{G}^N(s)$ has the same distribution as $G^N(N\pi^N(s),\kappa)$. Therefore, for any $\theta\in(0,1)$,
\begin{align}
&\Prob{L_1\left(\mathcal{G}^N(s)\right)\ge \theta N \,\Big|\, \rho(\kappa(s)\circ \pi^N(s))\ge 1+\epsilon/2}\nonumber\\
&\qquad\ge \min_{m\in[M]} \Prob{L_1\left(G^N(\fl{ N\pi^{(m)}},\kappa^{(m)})\right)\ge \theta N},\label{eq:L1stochdom}
\end{align}
where the floor function is applied component-wise. However, for each $m\in[M]$, Proposition \ref{BJRL1enhanced} says that, as $N\rightarrow\infty$,
$$\frac{1}{N}L_1\left(G^N((\fl{N\pi^{(m)}},\kappa^{(m)})\right) \quad \stackrel{d}{\rightarrow}\quad  \sum_{i\in[k]}\pi_i \zeta_i^{\pi^{(m)},\kappa^{(m)}}>0,$$
for each $m\in[M]$. We take $\theta>0$ satisfying
$$\theta< \min_{m\in[M]}\Prob{|\Xi^{\pi^{(m)},\kappa^{(m)}}|=\infty}.$$
Returning to \eqref{eq:L1stochdom} with this value of $\theta$, we find
$$\lim_{N\rightarrow\infty}\Prob{L_1\left(\mathcal{G}^N(s)\right)\ge \theta N \,\Big|\, \rho(\kappa(s)\circ \pi^N(s))\ge 1+\epsilon/2} =1.$$
So, if \eqref{eq:superc4contrad} holds, we have
$$\liminf_{\substack{N\rightarrow\infty\\ N\in\mathcal{N}}} \Prob{L_1\left(\mathcal{G}^N(s)\right) \ge \theta N}>0.$$
Conditional on the event $\left\{L_1\left(\mathcal{G}^N(s)\right)\ge \theta N\right\}$, the probability that this largest component is not struck by lightning before any fixed time $s'>s$ vanishes as $N\rightarrow\infty$, since the lightning rate $\lambda(N)\gg \frac{1}{N}$. So
$$\liminf_{\substack{N\rightarrow\infty\\ N\in\mathcal{N}}}\Prob{\Phi^N(s+)\le \Phi^N(s)-\theta}>0,$$
and so the same holds for the limit,
$$\Prob{\Phi(s+)\le \Phi(s)-\theta}>0,$$
which contradicts the almost-sure continuity of any weak limit $\Phi$. So \eqref{eq:limnotsupercrit} holds.
\end{proof}
\end{prop}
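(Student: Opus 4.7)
The plan is to argue by contradiction and reduce the problem to a question about the largest component of an inhomogeneous random graph at a single fixed time. I would first exploit the fact that $\pi$ is almost surely continuous (already established in this section) and that $\rho(\cdot)$ and $t\mapsto \kappa(t)=\kappa+t\mathbf{1}$ are continuous, so $t\mapsto \rho(\kappa(t)\circ\pi(t))$ is almost surely continuous. If \eqref{eq:limnotsupercrit} fails, then with positive probability this function exceeds $1+\epsilon/2$ on a sub-interval of $[0,T]$ of positive width. Partitioning $[0,T]$ by a dense countable set, some deterministic time $s$ lies in the interval with positive probability, and hence (using that $\pi^N(s)\to\pi(s)$ in distribution at the continuity point $s$) there is an infinite subsequence $\mathcal{N}\subseteq\N$ along which $\liminf_{N\in\mathcal{N}}\Prob{\rho(\kappa(s)\circ\pi^N(s))\ge 1+\epsilon/2}>0$.

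Next I would translate this supercriticality of the type vector into the presence of a macroscopic component of $\mathcal{G}^N(s)$. By Proposition \ref{GNtdistprop}, conditional on $\pi^N(s)$, the graph $\mathcal{G}^N(s)$ has (after relabelling) the distribution of $G^N(N\pi^N(s),\kappa(s))$, to which Proposition \ref{BJRL1enhanced} applies. The main obstacle is that \ref{BJRL1enhanced} is only a one-pair asymptotic statement, while $(\pi^N(s),\kappa(s))$ ranges over the whole supercritical regime with kernel bounded by $\kappa_{\max}+T$; I need the giant-component lower bound to hold \emph{uniformly} in this set. This is exactly the purpose of the minorisation lemma \ref{minorisation}: it provides a finite list of pairs $(\pi^{(m)},\kappa^{(m)})_{m=1}^{M}$ with $\rho(\kappa^{(m)}\circ\pi^{(m)})=1+\epsilon/3$ such that every pair with $\rho\ge 1+\epsilon/2$ dominates at least one list entry componentwise in both $\pi$ and $\kappa$.

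Using the obvious monotone coupling of inhomogeneous random graphs in both the type-count vector (add vertices) and the kernel (add independent extra edges), $L_1(\mathcal{G}^N(s))$ stochastically dominates $L_1(G^N(\lfloor N\pi^{(m)}\rfloor,\kappa^{(m)}))$ for some random $m\in[M]$, on the supercritical event. Applying \ref{BJRL1enhanced} to each of the $M$ fixed supercritical pairs and fixing any $\theta$ strictly below $\min_m \pi^{(m)}\cdot\zeta^{\pi^{(m)},\kappa^{(m)}}$, which is positive by Proposition \ref{Xisurvivallemma}, I conclude that, along $\mathcal{N}$, the event $\{L_1(\mathcal{G}^N(s))\ge \theta N\}$ has probability bounded away from zero.

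To close the argument, I would use the critical scaling $\lambda(N)\gg 1/N$: a component of size at least $\theta N$ is struck by lightning at rate $\lambda(N)\cdot \theta N \to \infty$, so for any fixed $s'>s$ it is frozen by time $s'$ with probability tending to one. Thus $\Phi^N(s')\le \Phi^N(s)-\theta$ with probability bounded below along $\mathcal{N}$, and passing to the weak limit and then letting $s'\downarrow s$, the limit $\Phi$ would have a downward jump of size at least $\theta$ at $s$, contradicting the uniform continuity of $\Phi$ from Theorem \ref{smolunique}. The genuine technical content is the uniform giant-component lower bound furnished by combining \ref{BJRL1enhanced} with the minorisation lemma; the rest is the standard mean-field heuristic that macroscopic components are frozen essentially instantly under critical lightning scaling.
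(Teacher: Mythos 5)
Your proposal follows the paper's proof almost step for step: the reduction to a fixed time $s$ via continuity, the use of Lemma \ref{minorisation} to obtain a finite family of minorants, the stochastic domination and appeal to Proposition \ref{BJRL1enhanced} to get a uniform giant-component lower bound, the choice of $\theta$ below $\min_m \Prob{|\Xi^{\pi^{(m)},\kappa^{(m)}}|=\infty}$, and finally the lightning argument forcing a discontinuity in $\Phi$. The approach is essentially identical and correct.
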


\subsubsection{Weak limits are not subcritical after $t_c$}

We start with a lemma concerning the expected size of the component of a uniformly chosen vertex in a subcritical inhomogeneous random graph. The final step includes a bound which is rather weak, but will be sufficient for the main proposition which follows.

\begin{lemma}\label{subcritECvbd}
Fix $N\in \N$, $p\in\N_0^k$ and $\kappa\in\mathbb{R}_{\ge 0}^{k\times k}$ satisfying $\rho(\kappa\circ p/N)<1$. Let $C(v)$ be the component containing a uniformly chosen vertex in $G^N(p,\kappa)$. Then
$$\E{|C(v)|} \le  \frac{1}{\kappa_{\min}||\pi||_1} \cdot \frac{\rho(\kappa\circ\pi)}{1-\rho(\kappa\circ \pi)},$$
where $\kappa_{\min}:= \min_{i,j\in[k]}\kappa_{i,j}$, and $\pi:= p/N$. 
\begin{proof}
Set $\bar \pi:= \pi/||\pi||_1= p/||p||_1$. Consider the branching process tree with $k$ types, $\bar \Xi^{\pi,\kappa}$, where the type of the root has distribution $\bar \pi$, and the offspring distributions are the same as for $\Xi^{\pi,\kappa}$. By coupling Binomial and Poisson distributions (see, for example, \cite{vdHRGCN} \textsection 2.3), we obtain $|C(v)|\le_{st} |\bar\Xi^{\pi,\kappa}|$, and in particular, $\E{|C(v)|}\le  \E{|\bar\Xi^{\pi,\kappa}|}$.

Similarly, consider another branching process tree with $k$ types, $\hat \Xi^{\pi,\kappa}$, where the type of the root has distribution $\mu(\kappa\circ \pi)$, and the offspring distributions are again the same as for $\Xi^{\pi,\kappa}$. By considering the number of offspring at each generation of $\hat \Xi^{\pi,\kappa}$ we have
$$\E{|\hat \Xi^{\pi,\kappa}|}= 1+\rho(\kappa\circ \pi)+\rho(\kappa\circ \pi)^2+\ldots = \frac{1}{1-\rho(\kappa\circ \pi)}.$$
However, the distribution of $\bar\Xi^{\pi,\kappa}$ conditional on the tree being non-empty and the root having type $i$ is the same as the distribution of $\hat \Xi^{\pi,\kappa}$ conditional on the root having type $i$. Therefore, by the law of total expectation,
$$\E{|\bar\Xi^{\pi,\kappa}|}\le \max_{i\in[k]}\frac{\pi_i/||\pi||_1}{\mu_i(\kappa\circ \pi)}\E{|\hat \Xi^{\pi,\kappa}|}.$$
However, since $\mu(\kappa\circ \pi)$ is a left-eigenvector, we have
$$\frac{\mu_j(\kappa\circ\pi)}{\pi_j}= \frac{1}{\rho(\kappa\circ \pi)} \sum_{i=1}^k \mu_i(\kappa\circ \pi) \kappa_{i,j}\ge \frac{\kappa_{\min}}{\rho(\kappa\circ \pi)},\quad j\in[k],$$
and the result follows immediately.
\end{proof}
\end{lemma}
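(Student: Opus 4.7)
The natural strategy is to dominate $|C(v)|$ stochastically by the total progeny of a suitable multitype branching process, and then evaluate that expectation using Perron--Frobenius theory combined with a change of root distribution. First I would set up the coupling. For a type~$i$ vertex in $G^N(p,\kappa)$ the number of its type~$j$ neighbours is $\Bin{p_j-\mathds{1}_{\{i=j\}}}{1-\exp(-\kappa_{i,j}/N)}$, and if one realises this by taking $p_j$ independent $\Po(\kappa_{i,j}/N)$ variables and summing their indicators of positivity, summing them instead produces $\Po(\kappa_{i,j}\pi_j)$, which dominates. Applying this coupling generation-by-generation to a BFS exploration rooted at a uniformly chosen vertex $v$ (whose type is distributed as $\bar\pi:=\pi/||\pi||_1$), yields a multitype Galton--Watson tree $\bar\Xi^{\pi,\kappa}$ with root law $\bar\pi$ and Poisson offspring of mean matrix $\kappa\circ\pi$, satisfying $|C(v)|\le_{st}|\bar\Xi^{\pi,\kappa}|$.

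Next I would evaluate $\E{|\bar\Xi^{\pi,\kappa}|}$ by comparing with a second tree $\hat\Xi^{\pi,\kappa}$ whose root type is distributed as $\mu(\kappa\circ\pi)$. Writing $s_i$ for the expected total progeny conditional on the root having type $i$, the fact that $\mu(\kappa\circ\pi)$ is a left-eigenvector makes the expected population at generation $n$ of $\hat\Xi^{\pi,\kappa}$ equal to $\rho(\kappa\circ\pi)^n$, and so
\[\sum_i \mu_i(\kappa\circ\pi)\, s_i \;=\; \E{|\hat\Xi^{\pi,\kappa}|}\;=\;\frac{1}{1-\rho(\kappa\circ\pi)}.\]
Since each $s_i\ge 0$, a simple reweighting gives
\[\E{|\bar\Xi^{\pi,\kappa}|}=\sum_i \bar\pi_i s_i\;\le\;\Bigl(\max_i \tfrac{\bar\pi_i}{\mu_i(\kappa\circ\pi)}\Bigr)\cdot \frac{1}{1-\rho(\kappa\circ\pi)}.\]

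Finally, to control the ratio I would use the left-eigenvector identity $\rho(\kappa\circ\pi)\,\mu_j(\kappa\circ\pi)=\sum_i \mu_i(\kappa\circ\pi)\kappa_{i,j}\pi_j$. Because $\kappa_{i,j}\ge \kappa_{\min}$ and $||\mu(\kappa\circ\pi)||_1=1$, this gives $\mu_j(\kappa\circ\pi)/\pi_j\ge \kappa_{\min}/\rho(\kappa\circ\pi)$, hence $\bar\pi_j/\mu_j(\kappa\circ\pi)\le \rho(\kappa\circ\pi)/(\kappa_{\min}||\pi||_1)$. Combining with the previous display yields exactly the claimed bound. The only step requiring any care is the generation-by-generation stochastic domination and its lifting to a coupling of the \emph{total} progeny; this is entirely standard for Poisson-type multitype kernels, so I expect no genuine obstacle in the argument.
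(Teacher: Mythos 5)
Your proposal is correct and follows essentially the same route as the paper: couple $|C(v)|$ below the total progeny of a multitype Poisson branching tree $\bar\Xi^{\pi,\kappa}$ with root law $\bar\pi$, compare to the tree $\hat\Xi^{\pi,\kappa}$ with root law $\mu(\kappa\circ\pi)$ via the generation-wise eigenvalue computation $\E{|\hat\Xi^{\pi,\kappa}|}=1/(1-\rho)$, reweight the root, and bound the ratio $\bar\pi_i/\mu_i$ through the left-eigenvector identity. The only difference is that you spell out the Binomial--Poisson coupling explicitly where the paper cites a reference.
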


\begin{prop}For any $\epsilon>0$,
\begin{equation}\label{eq:limnotsubcrit}\Prob{\sup_{t\in [t_c,T]} \rho(\pi(t)\circ \kappa(t)) \le 1-\epsilon}=0.\end{equation}

\begin{proof}
By the same argument as in Proposition \ref{neversupercritprop}, either \eqref{eq:limnotsubcrit} holds, or there exists $s\in[t_c,T)$ and an infinite subsequence $\mathcal{N}\subset \N$ such that
$$\liminf_{\substack{N\rightarrow\infty \\ N\in\mathcal{N}}} \Prob{\rho\left(\kappa(s)\circ \pi^N(s)\right) \le 1-\epsilon/2 } >0.$$
Now, we can choose $\delta>0$ such that $s+\delta<T$ and
$$\frac{\kappa_{i,j}(s+\delta)}{\kappa_{i,j}(s)} \le \frac{1-\epsilon/3}{1-\epsilon/2},\quad \forall i,j\in[k].$$
Since $\rho(\cdot)$ is increasing as a function of each entry of its argument (which is discussed in more detail in \eqref{eq:CWformula})
\begin{equation}\label{eq:upperevaluebd}\liminf_{\substack{N\rightarrow\infty \\ N\in\mathcal{N}}} \Prob{\rho\left(\kappa(s+\delta)\circ \pi^N(s)\right) \le 1-\epsilon/3 } >0.\end{equation}
We now consider how many vertices are frozen during the time-interval $[s,s+\delta]$, in expectation. By construction of the lightning processes, and Proposition \ref{GNtdistprop}, for any $t>0$, it is the case that conditional on $\mathcal{F}^N(t-)$ and the event that an alive vertex is struck by lightning at time $t$, the number of vertices frozen $\left[\Phi^N(t-) - \Phi^N(t)\right]$ has the same law as $|C(1)|$ in the IRG $G^N(N\pi^N(t-),\kappa(t))$. In particular, in our setting, for any lightning strike at time $s'\in[s,s+\delta]$,
\begin{align*}
&\E{\Phi^N(s'-) - \Phi^N(s')\,\Big|\, \mathcal{F}^N(s'-), \Phi^N(s'-)-\Phi^N(s')>0}\\
&\qquad \le \frac{1}{N}\E{|C(1)|\text{ in }G^N(N\pi^N(s'-),\kappa(s'))\,\Big|\, \mathcal{F}^N(s'-)}\\
&\qquad \le \frac{1}{N}\E{ |C(1)|\text{ in }G^N(N\pi^N(s),\kappa(s+\delta))\, \Big|\, \mathcal{F}^N(s'-)},\\
\intertext{almost surely, since $|C(1)|$ is an increasing function of the graphs. But the quantity in the final expectation is actually $\F^N(s)$-measurable, and so}
&\E{\Phi^N(s'-) - \Phi^N(s')\,\Big|\, \mathcal{F}^N(s'-), \Phi^N(s'-)-\Phi^N(s')>0}\\
&\qquad\le \frac{1}{N}\E{ |C(1)|\text{ in }G^N(N\pi^N(s),\kappa(s+\delta))\, \Big|\, \mathcal{F}^N(s)}.
\end{align*}
In particular, this upper bound is independent of behaviour on the interval $[s,s')$. Since the process recording all lightning strikes is dominated by a Poisson process with rate $N\lambda(N)$, we obtain
\begin{equation}\label{eq:freezingoninterval}
\E{\Phi^N(s)-\Phi^N(s+\delta) \,\Big|\, \mathcal{F}^N(s)} \le \delta\lambda(N) \E{ |C(1)|\text{ in }G^N(N\pi^N(s),\kappa(s+\delta))\, \Big|\, \mathcal{F}^N(s)}.\end{equation}

Using Lemma \ref{subcritECvbd}, the expectation of this component size conditional on $\mathcal{F}^N(s)$ is at most
$$\frac{1}{\delta ||\pi^N(s)||_1} \cdot \frac{\rho\left(\kappa(s+\delta)\circ \pi^N(s)\right)}{1-\rho\left(\kappa(s+\delta)\circ \pi^N(s)\right)},$$
almost surely. Consider $\eta$ as given by Lemma \ref{defnetalemma}, and define the event
$$\mathcal{A}^N:= \left\{ \rho\left(\kappa(s+\delta)\circ \pi^N(s)\right) \le 1-\epsilon/3, \, \Phi^N(s)\ge k\eta\right\},$$
which is certainly $\mathcal{F}^N(s)$-measurable. By \eqref{eq:notypetoosmall} and \eqref{eq:upperevaluebd}, as $\mathcal{N}\ni N\rightarrow\infty$, $\liminf \Prob{\mathcal{A}^N}>0$. But then using \eqref{eq:freezingoninterval} we have
$$\E{\Phi^N(s)-\Phi^N(s+\delta) \,\Big |\, \mathcal{A}^N}\le \frac{1}{k\delta \eta} \cdot \frac{3}{\epsilon}\cdot \delta \lambda(N)\ll 1  .$$
It follows by the law of total probability and by Markov's inequality that for any $\theta>0$,
$$\liminf_{\substack{N\rightarrow\infty\\N\in\mathcal{N}}}\Prob{\Phi^N(s)-\Phi^N(s+\delta) \le \theta}>0,$$
and so
$$\Prob{\Phi(s)-\Phi(s+\delta)=0}>0,$$
which contradicts the requirement that any weak limit $\Phi$ is almost surely strictly decreasing on $[t_c,T]$.
\end{proof}
\end{prop}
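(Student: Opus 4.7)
The plan is to argue by contradiction along lines dual to those of Proposition \ref{neversupercritprop}: instead of showing that a supercritical period would force a macroscopic jump in $\Phi^N$, I would show that a genuinely subcritical period would force $\Phi^N$ to lose essentially no mass, contradicting the strict monotonicity of $\Phi$ on $[t_c,T]$ guaranteed by Theorem \ref{smolunique}.

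First, suppose for contradiction that the event $\{\exists t \in [t_c,T]: \rho(\kappa(t)\circ \pi(t))\le 1-\epsilon\}$ has positive probability in some weak limit. By continuity of $\kappa$, the almost-sure continuity of $\pi$, and continuity of $\rho$ as a function of the matrix entries, this event in fact implies there is an open sub-interval on which $\rho \le 1-\epsilon/2$. Hence there is a fixed time $s \in [t_c,T)$ and an infinite subsequence $\mathcal{N} \subset \mathbb{N}$ along which
\[ \liminf_{\mathcal{N}\ni N\to\infty}\Prob{\rho(\kappa(s)\circ\pi^N(s))\le 1-\epsilon/2} > 0.\]
Now I would choose $\delta>0$ small enough, depending only on $\epsilon$, $\kappa_{\max}$, $T$, that $\kappa(s+\delta)$ and $\kappa(s)$ are sufficiently close entry-wise that monotonicity of $\rho$ (in each entry of its argument) yields $\rho(\kappa(s+\delta)\circ\pi^N(s))\le 1-\epsilon/3$ on the same event, with $s+\delta<T$.

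Next, for each lightning strike at a time $s'\in[s,s+\delta]$, conditional on the past up to $s'-$, Proposition \ref{GNtdistprop} identifies (up to relabelling) the current graph with an inhomogeneous random graph $G^N(N\pi^N(s'-),\kappa(s'))$, so the expected size of the frozen component is at most the expected component size of a uniformly chosen vertex there. Since the component size is monotone under adding edges and since $\pi^N(\cdot)$ is non-increasing, this expected size is further bounded above by the corresponding expected size in $G^N(N\pi^N(s),\kappa(s+\delta))$, which is $\mathcal{F}^N(s)$-measurable. Lemma \ref{subcritECvbd} then supplies the a.s.\ bound $\tfrac{1}{\kappa_{\min}\|\pi^N(s)\|_1}\cdot\tfrac{\rho}{1-\rho}$, which on our event is at most a deterministic constant once we invoke Lemma \ref{defnetalemma} to bound $\|\pi^N(s)\|_1$ uniformly below by some $k\eta>0$ with high probability.

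Dominating the lightning counting process on $[s,s+\delta]$ by a Poisson process of rate $N\lambda(N)$, the expected total mass frozen in that interval is at most $\delta \lambda(N)\cdot O(1)$, which vanishes as $N\to\infty$ since $\lambda(N)\ll 1$. Hence $\E{\Phi^N(s)-\Phi^N(s+\delta)\mid\mathcal{A}^N}\to 0$, where $\mathcal{A}^N$ is the positive-probability event described above, combining subcriticality and the $\pi_i$-lower-bound. By Markov's inequality, for any $\theta>0$ one has $\liminf_{N\in\mathcal{N}}\Prob{\Phi^N(s)-\Phi^N(s+\delta)\le\theta}>0$, and passing to the limit gives $\Prob{\Phi(s+\delta)=\Phi(s)}>0$, contradicting that $\Phi$ is strictly decreasing on $[t_c,T]$ by Theorem \ref{smolunique}.

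The main obstacle is the middle step: converting the weak-limit statement about $\pi$ into a quantitative per-strike bound on the component size that is (i) uniform in $N$, (ii) robust against the fact that $\pi^N$ may drift downwards during $[s,s+\delta]$, and (iii) measurable with respect to the correct filtration so that the Poisson thinning argument for the lightning process is legitimate. The trick is to bound monotonically through $\kappa(s+\delta)$ rather than $\kappa(s')$ at the instant of the strike, which freezes the relevant expectation at time $s$ and permits both the subcriticality bound from Lemma \ref{subcritECvbd} and the Poisson domination to be combined cleanly.
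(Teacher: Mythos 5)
Your proposal is correct and follows essentially the same route as the paper: the same reduction to a fixed time $s$ along a subsequence, the same monotone transfer of the expected component-size bound from $(\pi^N(s'-),\kappa(s'))$ to the $\mathcal{F}^N(s)$-measurable quantity at $(\pi^N(s),\kappa(s+\delta))$ via Lemma \ref{subcritECvbd}, the same Poisson domination of the lightning process, and the same Markov-inequality contradiction with the strict decrease of $\Phi$. You have also correctly isolated the key technical point---fixing the comparison graph at time $s$ and kernel $\kappa(s+\delta)$ so that the bound is filtration-compatible---which is exactly the device the paper uses.
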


We have shown that any weak limit $\pi$ is continuous, and satisfies $\rho(\kappa(t)\circ\pi(t))=1$ for $t\ge t_c$, and satisfies $\pi(t)=\pi(0)$ for $t\le t_c$. Thus we have shown that \eqref{eq:pregelation} and \eqref{eq:critcondition} hold.

\subsection{Asymptotic proportions of types of frozen vertices}\label{integralsection}

To complete the proof of Theorem \ref{weaklimitthm} it remains to show that any weak limit satisfies \eqref{eq:flowDE}.

\subsubsection{Weak convergence towards integral equation}
Throughout this final section, $\kappa$ remains fixed, and so $\kappa(t)$ is fixed for all $t\ge 0$. To emphasise this, and for brevity, we will write $\mu(t,\pi)$ for $\mu(\kappa(t)\circ \pi)$.

Suppose we have
\begin{equation}\label{eq:discrinteq} \sup_{t\in[t_c,T]}\left|\left|\pi^N(t_c)-\pi^N(t) + \int_{t_c}^t \mu(s,\pi^N(s-)) \mathrm{d}\Phi^N(s) \right|\right|_1\stackrel{\mathbb{P}}\rightarrow 0,\end{equation}
as $N\rightarrow\infty$. We will show that this is a sufficient requirement for any weak limit $\pi(\cdot)$ to satisfy the following integral version of the differential equation \eqref{eq:flowDE} governing the evolution of the type distribution:
\begin{equation}\label{eq:flowintegral}\pi(t_c)-\pi(t)+\int_{t_c}^t \mu(s,\pi(s))\mathrm{d}\Phi(s)=0,\quad t\in[t_c,T].\end{equation}
This is sufficient for \eqref{eq:flowDE} since $\Phi$ is differentiable on $(t_c,\infty)$, and $\mu(s,\pi(s))$ is almost surely continuous. We have $\Phi^N\rightarrow \Phi$ uniformly on $[0,T]$, and again let $\pi(\cdot)$ be a weak limit of $\pi^N(\cdot)$ along the subsequence $\mathcal{N}\subset\N$. Since $\pi(\cdot)$ and $\kappa(\cdot)$ are continuous, $\mu(\cdot,\pi(\cdot))$ is uniformly continuous on $[0,T]$. Therefore
$$ \sup_{t\in[t_c,T]}\left |\left| \int_{t_c}^t \mu(s,\pi(s)) \mathrm{d}\left[\Phi^N(s)-\Phi(s) \right] \right|\right|_1 \stackrel{\mathbb{P}}\rightarrow 0,$$
as $N\rightarrow\infty$. To conclude \eqref{eq:flowintegral} from \eqref{eq:discrinteq}, it remains to show that
\begin{equation}\label{eq:piN-vspi} \sup_{t\in[t_c,T]} \left|\left| \int_{t_c}^T \left[ \mu(s,\pi^N(s-)) - \mu(s,\pi(s)) \right]\mathrm{d}\Phi^N(s)\right|\right|_1\stackrel{\mathbb{P}}\rightarrow 0,\end{equation}
as $\mathcal{N}\ni N\rightarrow\infty$. But certainly for any $t\in[t_c,T]$ we have
$$\left|\left| \int_{t_c}^t \left[ \mu(s,\pi^N(s-)) - \mu(s,\pi(s)) \right]\mathrm{d}\Phi^N(s)\right|\right|_1 \le \int_{t_c}^T \left |\left| \mu(s,\pi^N(s-)) - \mu(s,\pi(s)) \right|\right|_1 \mathrm{d}\Phi^N(s).$$
Consider $\eta>0$ as given by Lemma \ref{defnetalemma}. It follows directly from \eqref{eq:notypetoosmall} that
$$\Prob{\exists i\in[k] \text{ s.t. } \pi_i(T)<\eta}=0.$$
Conditional on $\pi^N_i(s-)\ge \eta$ for all $i\in[k]$, Lemma \ref{muLipprop1} gives, as in \eqref{eq:usingmuLip},
$$ \left |\left| \mu(s,\pi^N(s-)) - \mu(s,\pi(s)) \right|\right|_1\le  (\kappa_{\max}+T)C\big(\eta(t_c\vee\kappa_{\min}),\kappa_{\max}+T\big)||\pi^N(s-)- \pi(s)||_1.$$
Therefore, writing $C$ for $(\kappa_{\max}+T)C\big(\eta(t_c\vee \kappa_{\min}),\kappa_{\max}+T\big)$, on the event\\$\{\pi^N_i(T)\ge \eta, \,\forall i\in[k]\}$,
$$\left|\left| \int_{t_c}^t \left[ \mu(s,\pi^N(s-)) - \mu(s,\pi(s)) \right]\mathrm{d}\Phi^N(s)\right|\right|_1 \le C\int_{t_c}^T || \pi^N(s-) - \pi(s)||_1 \mathrm{d}\Phi^N(s).$$
As $\mathcal{N}\ni N\rightarrow\infty$, both $\pi^N\rightarrow \pi$, and $\Phi^N\rightarrow \Phi$ uniformly in distribution on $[0,T]$, so the RHS vanishes in probability. By Lemma \ref{defnetalemma}, $\Prob{\pi^N_i(T)\ge \eta \;\forall i\in[k]}\rightarrow 1$. Thus \eqref{eq:piN-vspi} follows, and we may conclude \eqref{eq:flowintegral} from \eqref{eq:discrinteq}. It remains to show \eqref{eq:discrinteq}. We will show \eqref{eq:discrinteq} in the next section, after a preliminary result.

\subsubsection{A result about coupled processes}
First, we prove a general result about coupled processes.

\par
The motivation for the setup is the following. Every time a component is frozen in the multitype frozen percolation process, the distribution of types in this frozen component is not \emph{exactly} the same as the left-eigenvector of the appropriate kernel, but the difference is close to zero so long as the component is fairly large. The expression on the LHS of \eqref{eq:discrinteq} records the accumulation of this error. Each time $\Phi^N$ has a downward jump, the expected extra error accumulated is small relative to the expected size of the jump of $\Phi^N$. The following result will show that this is enough to conclude that the total error is small in probability, uniformly in time.

\par
For some $N\in\N$, consider $(\xi_m)_{0\le m\le N}$ and $(Y_m)_{0\le m\le N}$, $\R$-valued processes adapted to filtration $\F=(\F_m)_{0\le m\le N}$. We assume that $\xi_0=Y_0=0$, and that $(\xi_m)$ is non-decreasing. We assume also that $\xi_N\le 1$, and that for some $\delta\in(0,1)$ and $K\in \N$,
\begin{equation}\label{eq:xiboundsY}|Y_{m+1}-Y_m| \le \xi_{m+1}-\xi_m\le \frac{1}{K^2},\quad \text{a.s.}\quad m= 0,1,\ldots,N-1,\end{equation}
\begin{equation}\label{eq:bdEY}\text{and}\quad \big|\E{Y_{m+1}-Y_m | \F_{m}}\big|\le \delta  \E{\xi_{m+1}-\xi_m | \mathcal{F}_{m}},\quad \text{a.s.}\quad m= 0,1,\ldots,N-1.\end{equation}

That is, the increments of $\xi$ are bounded, and dominate the increments of $Y$. Furthermore these increments of $Y$ have have smaller expectation than those of $\xi$, uniformly in time and the history of the process.

\begin{lemma}\label{Yxilemma}Whenever \eqref{eq:xiboundsY} and \eqref{eq:bdEY} hold, we have:
\begin{equation}\label{eq:finalbound}\E{\sup_{0\le m\le N} |Y_n|}\le \frac{2}{K}+\delta. \end{equation}
\begin{proof}
We consider the Doob--Meyer decomposition of the process $(Y_m)$. That is,
$$W_0:=0,\qquad W_{m+1}:= W_m+ Y_{m+1} - \E{Y_{m+1}\,\big|\, \mathcal{F}_m},\quad m\ge 0,$$
$$A_0:=0,\qquad A_{m+1}:= A_m + \E{Y_{m+1}-Y_m\,\big|\,\mathcal{F}_m},\quad m\ge 0,$$
for which $(W_m)$ is an $\mathcal{F}$-martingale, and $(A_m)$ is a predictable process, and $Y_m=W_m+A_m$. All the statements which follow hold almost surely. First we consider $(A_m)$. Using \eqref{eq:bdEY}, we have
$$\left|A_{m+1}-A_m\right|\le \delta \E{\xi_{m+1}-\xi_m \,\big|\,\mathcal{F}_m},$$
from which, 
\begin{equation}\label{eq:boundsupAm}
\E{\sup_{0\le m\le N}\left |A_m\right|}\le \E{ \sum_{m=0}^{N-1} \left| A_{m+1}-A_m\right| } \le \delta\E{\xi_N}\le\delta.\end{equation}

Now we turn to $(W_m)$. Certainly, for any $0\le m\le N-1$, conditional on $\mathcal{F}_m$,
$$W_{m+1}-W_m = Y_{m+1}-Y_m - \E{ Y_{m+1}-Y_m \,\big|\,\mathcal{F}_m},$$
and so
$$\E{\left(W_{m+1}-W_m\right)^2 \,\Big|\, \mathcal{F}_m}\le \E{\left(Y_{m+1}-Y_m\right)^2 \,\Big|\,\mathcal{F}_m}.$$
Using \eqref{eq:xiboundsY}, for any $0\le m\le N-1$, 
$$\E{\left(W_{m+1}-W_m\right)^2 \,\Big|\, \mathcal{F}_m} \le  \E{\left(\xi_{m+1}-\xi_m\right)^2\,\Big|\,\mathcal{F}_m}\le \frac{1}{K^2}\E{\xi_{m+1}-\xi_m\,\big|\,\mathcal{F}_m}.$$
Since $(W_m)$ is a martingale bounded in $L^2$, by orthogonality of increments (see \textsection12.1 in \cite{Williamsbook}),
$$\E{W_N^2}= \sum_{m=0}^{N-1}\E{\left(W_{m+1}-W_m\right)^2} \le \frac{1}{K^2}\sum_{m=0}^{N-1}\E{\xi_{m+1}-\xi_m} \le \frac{1}{K^2},$$
since $\xi_N\le 1$. Finally, using Doob's $L^2$ inequality,
\begin{equation}\label{eq:boundsupWm}
\E{\sup_{0\le m\le N}|W_m|}\le\sqrt{\E{\sup_{0\le m\le N}W_m^2}} \le \sqrt{4\E{W_N^2}}\le\frac{2}{K}.  
\end{equation}
Since $Y_m=W_m+A_m$, it follows immediately from \eqref{eq:boundsupAm} and \eqref{eq:boundsupWm} that
$$\E{\sup_{0\le m\le N}|Y_m|}\le \frac{2}{K}+\delta,$$
as required.
\end{proof}
\end{lemma}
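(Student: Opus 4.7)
The natural approach is the Doob decomposition: write $Y_m = W_m + A_m$, where $A_m := \sum_{j=0}^{m-1} \E{Y_{j+1}-Y_j \mid \F_j}$ is predictable with $A_0=0$, and $W_m := Y_m - A_m$ is an $\F$-martingale. We then bound $\sup_m |A_m|$ and $\sup_m |W_m|$ separately and combine by the triangle inequality. The point of the decomposition is that the two hypotheses \eqref{eq:xiboundsY} and \eqref{eq:bdEY} address the two pieces cleanly: \eqref{eq:bdEY} is tailored for the predictable part, while \eqref{eq:xiboundsY} together with the bounded-jump condition $\xi_{m+1}-\xi_m \le 1/K^2$ is tailored for the martingale part.

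For $A$, the inequality \eqref{eq:bdEY} says exactly that $|A_{m+1}-A_m| \le \delta \,\E{\xi_{m+1}-\xi_m \mid \F_m}$, so telescoping pathwise and taking expectation plus the tower property yields $\E{\sup_m |A_m|} \le \delta \,\E{\xi_N} \le \delta$, using $\xi_N \le 1$. For $W$, since conditional expectation is an $L^2$-contraction, $(W_{m+1}-W_m)^2 \le \E{(Y_{m+1}-Y_m)^2 \mid \F_m}$ in conditional expectation, and by \eqref{eq:xiboundsY} this is at most $(\xi_{m+1}-\xi_m)^2$. The key observation—really the only nontrivial manipulation—is that the uniform jump bound $\xi_{m+1}-\xi_m \le 1/K^2$ allows us to linearise the square: $(\xi_{m+1}-\xi_m)^2 \le K^{-2}(\xi_{m+1}-\xi_m)$. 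This turns a second-moment estimate into a first-moment one, and summing via orthogonality of martingale increments gives $\E{W_N^2} \le K^{-2}\E{\xi_N} \le K^{-2}$. Doob's $L^2$ maximal inequality then yields $\E{\sup_m W_m^2} \le 4/K^2$, and Jensen gives $\E{\sup_m |W_m|} \le 2/K$.

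Adding the two bounds completes the proof. There is no real obstacle here; the only step that requires a moment's thought is the linearisation trick $(\xi_{m+1}-\xi_m)^2 \le K^{-2}(\xi_{m+1}-\xi_m)$, which is precisely what lets the $\xi_N \le 1$ hypothesis propagate into the martingale bound and explains why the conclusion takes the clean form $2/K + \delta$ rather than something weaker.
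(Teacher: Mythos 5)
Your proof is correct and is essentially the same argument as the paper's: Doob decomposition $Y=W+A$, bound $\sup|A_m|$ by telescoping the increment bound from \eqref{eq:bdEY}, linearise $(\xi_{m+1}-\xi_m)^2\le K^{-2}(\xi_{m+1}-\xi_m)$ to control $\E{W_N^2}$ via orthogonality of increments, and finish with Doob's $L^2$ maximal inequality. (One small remark: the step you attribute to the ``$L^2$-contraction'' property is really the conditional variance identity $\E{(Z-\E{Z|\F})^2\mid\F}=\E{Z^2\mid\F}-\E{Z\mid\F}^2\le\E{Z^2\mid\F}$, which is what the paper uses; the conclusion is the same.)
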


\subsection{Decomposition via freezing times}
We now prove \eqref{eq:discrinteq}, which is equivalent to
\begin{equation}\label{eq:discrinteq2} \sup_{t\in[t_c,T]} \left|\left| \int_{t_c}^t \left[\mu(s,\pi^N(s-)) \mathrm{d}\Phi^N(s) - \mathrm{d}\pi^N(s)\right] \right|\right|_1 \stackrel{\mathbb{P}}\rightarrow 0.\end{equation}

\par
To address this, we categorise each frozen vertex by its type and by its distance from the associated vertex which was struck by lightning. This will allow us to use Theorem \ref{typecontrolthm}. In the process $\mathcal{G}^N$, for each vertex $v\in[N]$, say $s_v$ is the time at which $v$ is frozen, as a result of some vertex $w$ being struck by lightning. (Note that $w=v$ is possible.) Define $d(v)=d(v,w)$ to be the graph distance in $\mathcal{G}^{N}(s_v)$ between $v$ and this vertex $w$.

\par
We now define for each $i\in[k]$ and any $r\in\{0,1,\ldots,N-1\}$,
\begin{equation}\label{eq:defnPsiN}\Psi^N(r,i,t) = \frac{1}{N}\#\left\{v\in[N]: \mathrm{type}(v)=i,\, s_v\in[0,t]\text{ and }d(v)= r \right\}.\end{equation}
Also define $\Psi^N(r,t):= \sum_{i=1}^k \Psi^N(r,i,t)$, the total proportion of vertices of any type frozen up to time $t$ which were distance $r$ from the vertex struck by lightning. We have
$$\sum_{r=0}^{N-1}\mathrm{d}\Psi^N(r,s)=-\mathrm{d}\Phi^N(s), \quad \sum_{r=0}^{N-1} \mathrm{d}\Psi^N(r,i,s)=-\mathrm{d}\pi^N_i(s),$$
and so \eqref{eq:discrinteq2} is further equivalent to
$$\sup_{t\in[t_c,T]}\left| \int_{t_c}^T \sum_{r=0}^{N-1}\left[ \mu_i(s,\pi^N(s-))\mathrm{d}\Psi^N(r,s) - \mathrm{d}\Psi^N(r,i,s) \right]\right| \stackrel{\mathbb{P}}\rightarrow 0,\quad \forall i\in[k],$$
as $N\rightarrow\infty$. Therefore, to prove \eqref{eq:discrinteq} and complete the proof of Theorem \ref{weaklimitthm} it will suffice to show the following lemma.

\begin{lemma}\label{freezedecomplemma}For each type $i\in[k]$,
\begin{equation}\label{eq:limsofPsiN}\lim_{N\rightarrow\infty}  \E{ \sup_{t\in[t_c,T]}\left| \int_{t_c}^t \sum_{r=0}^{N-1}\left[ \mu_i(s,\pi^N(s-))\mathrm{d}\Psi^N(r,s) - \mathrm{d}\Psi^N(r,i,s) \right]\right| }=0.
\end{equation}

\begin{proof}
Throughout the proof, we fix $i\in[k]$. We start by showing that small values of $r$ do not contribute on this scale in the limit. Fix some $R\in\N$. Then, at any time $t\le T$, the expected number of vertices within distance $R-1$ of a uniformly chosen alive vertex in $\mathcal{G}^{N}(t)$ is at most
$$1+\left[N (1-e^{-(\kappa_{\max}+T)/N})\right] + \ldots + \left[N (1-e^{-(\kappa_{\max}+T)/N})\right]^{R-1}.$$

Therefore
\begin{align*}
\E{\sum_{r=0}^{R-1} \Psi^N(r,T)} &\le \frac{1}N\cdot [\lambda(N)N]T \left[1+\left[N (1-e^{-(\kappa_{\max}+T)/N})\right] + \ldots\right.\\
&\qquad\qquad+ \left.\left[N (1-e^{-(\kappa_{\max}+T)/N})\right]^{R-1} \right],
\end{align*}
and since $\lambda(N)$ satisfies the critical scaling \eqref{eq:lambdascaling}, this vanishes as $N\rightarrow \infty$.

So it remains to show that
\begin{equation}\label{eq:limPsiNlargeM}\lim_{N\rightarrow\infty}\E{ \sup_{t\in[t_c,T]} \left| \int_{t_c}^t \sum_{r=R}^{N-1}\left[ \mu_i(s,\pi^N(s-))\mathrm{d}\Psi^N(r,s) - \mathrm{d}\Psi^N(r,i,s) \right]\right| }=0,
\end{equation}
for a fixed value of $R\in\N$ to be chosen shortly.

\par
Recall that $(\mathcal{F}^N(t))_{t\ge 0}$ is the natural filtration of the random type flow process $\pi^N$. We now define $(\bar{\mathcal{F}}^{N}(t))_{t\ge 0}$ to be the natural filtration of the collection of processes
$$\pi^N(\cdot) \quad\text{and}\quad \Psi^N(r,i,\cdot),\text{ for all }r\ge 0,\,i\in[k].$$
Note that, in a multitype frozen percolation process, conditional on the set of alive vertices and their types at time $t$, the graph structure of the frozen vertices is independent of $\mathcal{G}^N(t)$, the graph with types on alive vertices. So, although this filtration $(\bar{\mathcal{F}}^{N})$ is finer than $(\mathcal{F}^N)$, Proposition \ref{GNtdistprop} remains true after replacing conditioning on $\mathcal{F}^N(t)$ with conditioning on $\bar{\mathcal{F}}^N(t)$.

\par
We now use some notation from Section \ref{largecptssection} and Theorem \ref{typecontrolthm}. Recall that for some vertex $v$ in some graph $G$ with $k$ types, we let $W^{\ge R}_i$ be the number of type $i$ vertices in $G$ at distance at least $ R$ from $v$. Now, for each $s\in[0,T]$, and $R\in\{0,\ldots,N-1\}$, conditional on $\bar{\mathcal{F}}^N(s-)$ and the event that there is a lightning strike at time $s$, the distribution of $\sum_{r= R}^{N-1}\left( \Psi^N(r,s) - \Psi^N(r,s-)\right)$ is the same as the distribution of $W^{\ge R}$ corresponding to a uniformly chosen vertex in $G^N(N\pi^N(s-),\kappa(s))$.

\par
We take $\tau_0:=t_c$, and let the times that lightning strikes an alive vertex after $t_c$ be $t_c<\tau_1<\tau_2<\ldots$. Set $\alpha:=\max\{m: \tau_m\le T\}$ to be the number of such lightning strikes until time $T$. Since $\tau_1,\ldots,\tau_\alpha$ are precisely those times $t\in(t_c,T]$ for which $\pi^N(t-)-\pi^N(t)>0$, each $\tau_m$ is an $(\mathcal{F}^N)$-stopping time, and thus an $(\bar{\mathcal{F}}^N)$-stopping time too. Now consider for $m= 0,1,\ldots,\alpha$, the discrete process
\begin{align*}
Y^N_m&:=\int_{t_c}^{\tau_m}\sum_{r\ge R}^{N-1} \left[\mu_i(s,\pi^N(s-))\mathrm{d}\Psi^N(r,s) - \mathrm{d}\Psi^N(r,i,s)\right],\\
&=\sum_{\ell=1}^m \left\{\mu_i(\tau_\ell, \pi^N(\tau_i-))\left[ \sum_{r\ge R} \Psi^N(r,\tau_\ell)-\sum_{r\ge R}\Psi^N(r,\tau_\ell-)\right] \right. \\
&\qquad \left. -\left[ \sum_{r\ge R} \Psi^N(r,i,\tau_\ell)-\sum_{r\ge R}\Psi^N(r,i,\tau_\ell-)\right]\right\}.
\end{align*}
Then $(Y^N_m)_{0\le m\le \alpha}$ is adapted to $\left(\bar{\mathcal{F}}^N(\tau_m)\right)_{0\le m\le \alpha}$, and records the accumulation of error between the true proportion of types lost beyond radius $R$, and the proportion expected from the left-eigenvectors, as successive components are frozen.

\par
We also define, for $m= 0,1,\ldots,\alpha$,
$$\xi^N_m:= \int_{t_c}^{\tau_m} \sum_{r\ge R}\mathrm{d}\Psi^N(r,s)= \sum_{\ell=1}^m\left[ \sum_{r\ge R} \Psi^N(r,\tau_\ell)-\sum_{r\ge R}\Psi^N(r,\tau_\ell-)\right],$$
the discrete process recording the proportion of mass lost beyond radius $R$ after successive lightning strikes. This process $(\xi^N_m)_{0\le m\le \alpha}$ is also adapted to $\left(\bar{\mathcal{F}}^N(\tau_m)\right)_{0\le m\le \alpha}$.

\par
We will now compare the increments of $Y^N$ and the increments of $\xi^N$ in expectation using Theorem \ref{typecontrolthm}. In particular, we will need to exclude the possibility that any component of $\pi^N$ becomes too small, or that $\rho(\pi^N(t)\circ \kappa(t))$ becomes too large. Furthermore, to apply Lemma \ref{Yxilemma} we will have to ignore increments where the total mass lost is too large. All of these events happen with vanishing probability, and the quantities under consideration are uniformly bounded. Rather than condition that none of these events occur (which would affect the individual increments), we will exclude any pathological behaviour step-by-step for each freezing event, so as to preserve the Markov property.

\par
Recall the definition of $\eta$ from Lemma \ref{defnetalemma}. Set $\eta'=\min(\eta, \kappa_{\min}\vee t_c)>0$. Choose some $\delta\in(0,1)$, and consider $\epsilon=\epsilon(\delta,\eta',T+\kappa_{\max})$ and $R=R(\delta,\eta',T+\kappa_{\max})$ as defined in Theorem \ref{typecontrolthm}. Consider the events
$$\Theta^{N,\eta',\epsilon}_m:=\left\{\pi_j^N(\tau_m-)\ge \eta',\forall j\in[k],\, \sup_{t\in[0,\tau_m)} \rho(\pi^N(t)\circ \kappa(t))\le 1+\epsilon\right\},$$
each of which is $\F^N(\tau_m-)$-measurable, and thus also $\bar{\F}^N(\tau_m)$-measurable. On the event $\Theta^{N,\eta',\epsilon}_m$, the graphs $\mathcal{G}^N(s)$ satisfy the conditions of Theorem \ref{typecontrolthm} for all $s\in[0,\tau_m)$. Note that
$$\Theta^{N,\eta',\epsilon}_1\supset \ldots\supset \Theta^{N,\eta',\epsilon}_\alpha \supset \Theta^{N,\eta',\epsilon}:= \left\{\pi_j^N(T)\ge \eta',\forall j\in[k],\, \sup_{t\in[0,T]} \rho(\pi^N(t)\circ \kappa(t))\le 1+\epsilon\right\}.$$
We know from \eqref{eq:notypetoosmall} and \eqref{eq:limnotsupercrit} that
$$\lim_{N\rightarrow\infty} \Prob{\Theta^{N,\eta',\epsilon}}=1.$$

We also have $\chi=M(\eta')$ given by Proposition \ref{LDPprop}. We define
$$\xi^{N,\chi}_m:= \sum_{\ell=1}^m \mathds{1}_{\{\xi^N_\ell-\xi^N_{\ell -1} \le \chi\}}\left(\xi^N_\ell - \xi^N_{\ell-1}\right),\quad m= 0,1,\ldots,\alpha,$$
which counts the proportion of vertices frozen from beyond radius $R$, ignoring those occasions when the number of such vertices is greater than $\chi N$. (Recall that $\xi^N$ has been rescaled like $\Phi^N$, so that losing more than $\chi N$ vertices beyond radius $R$ corresponds to $\xi^N_\ell-\xi^N_{\ell-1}> \chi$.) Analogously, we define
$$Y^{N,\chi}_m := \sum_{\ell=1}^m  \mathds{1}_{\{\xi^N_\ell-\xi^N_{\ell -1}\le \chi\}}\mathds{1}_{\Theta^{N,\eta',\epsilon}_\ell}\left( Y^N_\ell - Y^N_{\ell-1} \right),\quad m=0,1,\ldots,\alpha,$$
which describes the accumulation of error in \eqref{eq:discrinteq} when components of size at most $\chi N$ are frozen, \emph{and} when the graph satisfies the conditions for Theorem \ref{typecontrolthm}. Observe that $\alpha\le N$ by construction, so we also define
$$\xi^{N,\chi}_m=\xi^{N,\chi}_\alpha,\quad Y^{N,\chi}_m=Y^{N,\chi}_\alpha,\quad m=\alpha+1,\ldots,N.$$

This pair of processes $(\xi^{N,\chi},Y^{N,\chi})$ is adapted to the filtration $\mathcal{H}^N=(\mathcal{H}^N_m)_{0\le m\le N}$ defined by $\mathcal H^N_m:=\bar{\F}^N(\tau_{m+1}-)$, for $m< \alpha$ and $H^N_m=\bar{\F}^N(\tau_\alpha)$ for $m\ge\alpha$. Observe that $\xi^{N,\chi}$ is non-decreasing and
$$\left|Y^{N,\chi}_{m+1}- Y^{N,\chi}_m\right|\le \xi^{N,\chi}_{m+1}-\xi^{N,\chi}_m \le \chi.$$

Furthermore, on $\Theta^{N,\eta',\epsilon}_{m+1}$ (which is $\mathcal{H}^N_m$-measurable),
$$\left( \xi^{N,\chi}_{m+1}-\xi^{N,\chi}_m \,\Big|\, \mathcal{H}^N_m \right)\;\equidist\; W^{\ge R}\mathds{1}_{A_{\chi}},$$
where $A_\chi=\{||W^{\ge R}||\le \chi N\}$, with the IRG taken to be $G^N(N\pi^N(\tau_{m+1}-),\kappa(\tau_{m+1}))$. Similarly, again on $\Theta^{N,\eta',\epsilon}_{m+1}$,
$$\left(Y^{N,\chi}_{m+1}-Y^{N,\chi}_m \,\Big|\, \mathcal{H}^N_m\right) \;\equidist\; W^{\ge R}_i \mathds{1}_{A_\chi} - \mu_i(\tau_{m+1},\pi^N(\tau_{m+1}-)) W^{\ge R}\mathds{1}_{A_\chi}.$$
On $(\Theta^{N,\eta',\epsilon}_{m+1})^c$, the increment $\left(Y^{N,\chi}_{m+1}-Y^{N,\chi}_m\,\Big|\, \mathcal{H}^N_m\right)$ is zero. Therefore, taking expectations and applying Theorem \ref{typecontrolthm}, we obtain
$$\left|\E{Y^{N,\chi}_{m+1}- Y^{N,\chi}_m\,\big|\, \mathcal{H}^N_m}\right|\le \delta \E{\xi^{N,\chi}_{m+1}-\xi^{N,\chi}_m\,\big|\, \mathcal{H}^N_m}, \quad \mathrm{a.s.},\quad m\ge 0.$$

Thus, for $K=\fl{\sqrt{\frac{1}{\chi}}}$, the processes $\xi^{N,\chi}$ and $Y^{N,\chi}$ precisely satisfy the conditions for Lemma \ref{Yxilemma}. On the event $\Theta^{N,\eta,\epsilon}$, 
$$ \sup_{t\in[t_c,T]} \left| \int_{t_c}^t \sum_{r=R}^{N-1}\left[ \mu_i(s,\pi^N(s-))\mathrm{d}\Psi^N(r,s) - \mathrm{d}\Psi^N(r,i,s) \right]\right| = \sup_{0\le m\le N} \left|Y^N_m \right|.$$
Therefore, by Lemma \ref{Yxilemma}
$$\limsup_{N\rightarrow\infty}\E{ \sup_{t\in[t_c,T]} \left| \int_{t_c}^t \sum_{r=R}^{N-1}\left[ \mu_i(s,\pi^N(s-))\mathrm{d}\Psi^N(r,s) - \mathrm{d}\Psi^N(r,i,s) \right]\right| }$$
$$\qquad \le 2K^{-1}+\delta + \limsup_{N\rightarrow\infty}2\left(1-\Prob{\Theta^{N,\eta',\epsilon}}\right)=2K^{-1}+\delta.$$

Our choice of $\delta$ was arbitrary, but as we take $\delta\rightarrow 0$, we may assume $\epsilon\rightarrow 0$ and thus $\chi\rightarrow 0$ also. Hence $K\rightarrow\infty$. So \eqref{eq:limPsiNlargeM} and \eqref{eq:limsofPsiN} follow, and the proof of Lemma \ref{freezedecomplemma}, and Theorem \ref{weaklimitthm} is complete.
\end{proof}

\end{lemma}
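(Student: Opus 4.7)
The natural approach is to decompose the integrated error by the graph distance $r$ between each frozen vertex and the lightning-struck vertex that initiated its freezing, treating small $r<R$ and large $r \ge R$ separately for a suitably chosen fixed $R$. For the small-radius contribution, I would observe that the expected number of vertices within distance $R-1$ of a uniformly chosen alive vertex in $\mathcal{G}^N(t)$ is bounded by $1 + C + \cdots + C^{R-1}$ for a constant $C$ depending only on $k$ and $\kappa_{\max}+T$; multiplying by the global rate $N\lambda(N)$ of lightning strikes and the horizon $T$ gives an expected proportion of such frozen vertices of order $T\lambda(N)(kT)^{R-1} \to 0$ by the critical scaling \eqref{eq:lambdascaling}, which handles the $r<R$ part of the sum for any fixed $R$.

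For the large-radius part, I would enumerate the post-$t_c$ lightning strike times as $\tau_1 < \tau_2 < \cdots$ and define a discrete-time pair $(\xi^N_m, Y^N_m)_{m\ge 0}$ adapted to a filtration generated by $\pi^N$ together with the full radius-by-type profiles $\Psi^N(r,i,\cdot)$, where $\xi^N_m$ records the cumulative proportion of mass frozen at radius $\ge R$ through the $m$-th freezing, and $Y^N_m$ records the corresponding $\mu_i$-weighted error from \eqref{eq:limPsiNlargeM}. Because Proposition \ref{GNtdistprop} remains valid under this finer filtration (the graph structure among frozen vertices at time $t$ is independent of $\mathcal{G}^N(t)$), the conditional law at $\tau_{m+1}-$ of the component frozen at $\tau_{m+1}$ matches that of $C(v)$ in $G^N(N\pi^N(\tau_{m+1}-),\kappa(\tau_{m+1}))$. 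Theorem \ref{typecontrolthm} then supplies the crucial estimate
\begin{equation*}\left|\E{Y^N_{m+1}-Y^N_m\,\big|\, \mathcal{H}^N_m}\right| \;\le\; \delta\,\E{\xi^N_{m+1}-\xi^N_m\,\big|\, \mathcal{H}^N_m}\end{equation*}
on the good event that $\rho(\kappa(\tau_{m+1})\circ\pi^N(\tau_{m+1}-)) \le 1+\epsilon$ and $\pi^N_j(\tau_{m+1}-) \ge \eta'$ for all $j$, with $\eta'$ supplied by Lemma \ref{defnetalemma} and $\epsilon, R$ chosen from Theorem \ref{typecontrolthm} in terms of $\delta$.

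Finally, to invoke Lemma \ref{Yxilemma} I need uniformly bounded increments, which fails as stated because an occasional giant component could be frozen. I would therefore truncate, step-by-step at each freezing event: zero out any step whose beyond-radius-$R$ contribution exceeds $\chi N$ (with $\chi = M(\eta')\epsilon$ from Proposition \ref{LDPprop}), and also any step on which the regular-graph conditions above fail. The latter global event has probability tending to $1$ by Lemma \ref{defnetalemma} and Proposition \ref{neversupercritprop}, while Proposition \ref{LDPprop} makes the former exclusion consistent in the regular regime. Lemma \ref{Yxilemma} then applies to $(\xi^{N,\chi},Y^{N,\chi})$ with $K = \fl{\sqrt{1/\chi}}$, yielding $\E{\sup_m |Y^{N,\chi}_m|} \le 2/K + \delta$, plus a vanishing error from the truncated events; sending $\delta \to 0$ forces $\epsilon,\chi \to 0$ and $K \to \infty$, so the bound vanishes. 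The main obstacle is the measurability bookkeeping: the truncation must be applied separately to each increment (rather than by globally conditioning on the bad events not occurring) so as to preserve the per-step conditional dominated-mean inequality required by Lemma \ref{Yxilemma}, and the filtration must remain just fine enough that Proposition \ref{GNtdistprop} still identifies the distribution of each freezing event at $\tau_m-$.
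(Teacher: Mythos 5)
Your proposal follows essentially the same route as the paper: decompose by graph distance $r$ and show the $r<R$ contribution vanishes under the critical scaling, pass to the discrete-time filtration indexed by freezing times, invoke Proposition \ref{GNtdistprop} under the finer filtration together with Theorem \ref{typecontrolthm} to get the per-step dominated-mean inequality, truncate each increment by $\chi N$ and by the regularity events (rather than conditioning globally), and conclude via Lemma \ref{Yxilemma} with $K=\fl{\sqrt{1/\chi}}$ and $\delta\to 0$. All the key ingredients and the order in which they are deployed match the paper's argument.
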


\section{Limits in time for frozen percolation type flows}\label{timelimitssection}
In this short section, we study the behaviour of a frozen percolation type flow as $t\rightarrow\infty$ and, in particular, prove Theorem \ref{flowtimelimitthm}.

First, we give a quick argument why $\Phi(t)\rightarrow 0$ as $t\rightarrow \infty$ in the framework of type flows. The Collatz--Wielandt formula \cite{Collatz42,Wielandt50} asserts that for $A\in\R^{k\times k}_{\ge 0}$,
\begin{equation}\label{eq:CWformula}\rho(A)= \max_{x\in \R^k_{\ge 0}\backslash \{\mathbf{0}\}} f(x),\quad\text{where }f(x):= \min_{\substack{1\le i\le n\\x_i\ne 0}} \frac{[xA]_i}{x_i}.
\end{equation}
In particular, $\rho(A)$ is non-decreasing as a function of positive matices $A$. Then, from the criticality condition \eqref{eq:critcondition}, for $t\ge t_c$,
$$1=\rho(\kappa(t)\circ \pi(t)) \ge \rho(t\mathbf{1} \circ \pi(t)) = t \rho(\mathbf{1}\circ \pi(t)).$$
Therefore $\rho(\mathbf{1}\circ \pi(t)) \le 1/t$. But note that $(1,\ldots,1)^T$ is a right-eigenvector of $\mathbf{1}\circ \pi(t)$, with eigenvalue $\Phi(t)$. Therefore
\begin{equation}\label{eq:upbdPhi}\Phi(t)\le 1/t.\end{equation}

\par
Now we prove that the proportion of types among the alive vertices converges as $t\rightarrow\infty$. We first state a generalisation of Lemma \ref{muLipprop1} which will be proved in Section \ref{Lipschitzproof}.
\begin{lemma}\label{muLipprop2}Let $\mathbb{A}$ be a compact subset of $\R_{\ge 0}^{k\times k}$ with the property that for any $A\in\mathbb{A}$, the Perron root of $A$ is simple. Then there exists a constant $C(\mathbb{A})<\infty$ such that, for all matrices $A,A'\in\mathbb{A}$,
\begin{equation}\label{eq:mulocLip2}||\mu(A)-\mu(A')||_1 \le C(\mathbb{A})\max_{i,j\in[k]} |A_{i,j}-A'_{i,j}|.\end{equation}
\end{lemma}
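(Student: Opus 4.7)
The plan is to establish local Lipschitz continuity of $\mu$ at each matrix in $\mathbb{A}$ via perturbation theory (essentially the implicit function theorem), and then upgrade to a uniform global Lipschitz constant using compactness, complemented by the trivial bound $\|\mu(A) - \mu(A')\|_1 \le 2$ whenever $A$ and $A'$ are far apart in entrywise sup distance.

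First, I would fix $A_0 \in \mathbb{A}$ and use the implicit function theorem to produce a local smooth branch of principal eigenvalue/eigenvector. Writing $\nu = \mu^\top$ for convenience, consider the smooth map
\begin{equation*}
F(\nu, \rho, A) := \bigl(A^\top \nu - \rho \nu,\; \mathbf{1}^\top \nu - 1\bigr) \in \R^{k+1},
\end{equation*}
defined on $\R^k \times \R \times \R^{k\times k}$. At a solution with $\rho$ a simple eigenvalue of $A$, the Jacobian of $F$ in $(\nu, \rho)$ is
\begin{equation*}
J = \begin{pmatrix} A^\top - \rho I & -\nu \\ \mathbf{1}^\top & 0 \end{pmatrix}.
\end{equation*}
I claim $J$ is invertible: suppose $J(v,\alpha)^\top = 0$. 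If $\alpha \neq 0$, then $(A^\top - \rho I)v = \alpha\nu$ would force $\nu \in \mathrm{Im}(A^\top - \rho I)$, which contradicts simplicity of $\rho$ (this would require a Jordan block of size $\ge 2$ at $\rho$, inflating algebraic multiplicity). If $\alpha = 0$, then $v \in \ker(A^\top - \rho I) = \mathrm{span}(\nu)$, and the row constraint $\mathbf{1}^\top v = 0$ together with $\mathbf{1}^\top \nu = 1$ forces $v = 0$. Hence IFT yields smooth functions $A \mapsto (\nu(A), \rho(A))$ on some neighbourhood $U(A_0)$; on a smaller neighbourhood their derivatives are bounded, and the mean value inequality then gives a local Lipschitz bound with constant $L(A_0)$ on the entrywise ball of radius $r(A_0)$ around $A_0$.

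Next, by compactness of $\mathbb{A}$ I cover it by finitely many such balls $B(A_m, r(A_m)/2)$ for $m = 1,\ldots,M$; let $\delta > 0$ be a Lebesgue number for this cover in the entrywise sup-metric, and set $L := \max_m L(A_m)$. For $A, A' \in \mathbb{A}$ at entrywise distance $\le \delta$, both lie in a common ball and the local bound applies. For $A, A'$ at entrywise distance exceeding $\delta$, since $\mu(A)$ and $\mu(A')$ are both probability vectors we have $\|\mu(A)-\mu(A')\|_1 \le 2 \le (2/\delta)\max_{i,j}|A_{i,j}-A'_{i,j}|$. Taking $C(\mathbb{A}) := \max(L,\, 2/\delta)$ finishes the proof.

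The main obstacle is the invertibility of $J$, which rests on the standard fact that algebraic simplicity of $\rho$ precludes Jordan blocks of size $\ge 2$ and hence ensures $\nu \notin \mathrm{Im}(A^\top - \rho I)$. Once that is established, the IFT application, the covering argument, and the ``far apart'' fallback are all routine. Note that this argument does not require any matrix in $\mathbb{A}$ to be irreducible or strictly positive—simplicity of the Perron root is the only structural input used, exactly as in the hypothesis—and recovers Lemma \ref{muLipprop1} by taking $\mathbb{A} = [\eta,T]^{k\times k}$, on which $\rho$ is trivially simple by Perron--Frobenius.
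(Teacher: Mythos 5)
Your proof is correct and takes essentially the same route as the paper: establish local smoothness (hence local Lipschitz continuity) of the principal left-eigenvector near each $A_0\in\mathbb{A}$ via perturbation theory, then upgrade to a uniform constant by compactness. The only difference is that the paper cites a theorem of Magnus--Neudecker for the local smoothness step (with a normalisation $\mu_0^\dagger\bar\mu(A)=1$ that then has to be converted to the $\ell_1$ normalisation), whereas you rederive it directly from the implicit function theorem with the $\ell_1$-normalisation constraint $\mathbf{1}^\top\nu=1$ built in, which is somewhat cleaner and more self-contained.
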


\begin{prop}\label{limexistsprop}For any frozen percolation type flow $\pi$, $\lim_{t\rightarrow\infty}\frac{\pi(t)}{\Phi(t)}$ exists..
\begin{proof}Directly from \eqref{eq:flowDE}, $\frac{\mathrm{d}}{\mathrm{d}t}\Phi(t)=-\phi(t)$. Therefore
$$\frac{\mathrm{d}}{\mathrm{d}t} \left( \frac{\pi(t)}{\Phi(t)}\right) \stackrel{\eqref{eq:flowDE}}{=} \frac{\phi(t)}{\Phi(t)} \left( \frac{\pi(t)}{\Phi(t)} - \mu((\kappa+t\mathbf{1})\circ \pi(t))\right).$$
Note that $\frac{\pi(t)}{\Phi(t)}= \mu(t\mathbf{1}\circ \pi(t))$, and so
\begin{equation}\label{eq:dpioverPhidt}\frac{\mathrm{d}}{\mathrm{d}t} \left( \frac{\pi(t)}{\Phi(t)}\right) = \frac{\phi(t)}{\Phi(t)} \Big[ \mu\left( t\mathbf{1}\circ \pi(t) \right) - \mu\left( (\kappa+t\mathbf{1})\circ \pi(t) \right) \Big].\end{equation}

Consider the sets of positive matrices
$$\mathcal{A}:= \left\{t\mathbf{1}\circ \pi(t): t\ge t_c\right\},\quad \mathcal{B}:= \left\{(\kappa+t\mathbf{1})\circ \pi(t):t\ge t_c\right\}.$$
Now, for any $A\in\mathcal{A}\cup\mathcal{B}$,
$$A_{i,j}\le (\kappa_{\max} +t)\pi_j(t)\le (\kappa_{\max} +t)\Phi(t)\le \frac{\kappa_{\max}}{t_c}+1,$$
where the final inequality follows from \eqref{eq:upbdPhi}. Hence matrices in $\mathcal{A}\cup\mathcal{B}$ are bounded in $\R_{\ge 0}^{k\times k}$ and thus the closure $\overline{\mathcal{A}\cup \mathcal{B}}$ is compact. Any matrix in $\overline{\mathcal{A}\cup\mathcal{B}}$ has the property that any row has either all positive entries, or all zero entries, and at least one row has all positive entries. Thus the Perron root of any matrix in $\overline{\mathcal{A}\cup\mathcal{B}}$ is a simple eigenvalue, and Lemma \ref{muLipprop2} applies, with $\mathbb{A}=\overline{\mathcal{A}\cup\mathcal{B}}$. In particular, there exists a constant $C=C(\mathbb{A})$ (depending on $\kappa$ and $\pi(0)$) such that
$$||\mu(A)-\mu(B)||_1 \le C \max_{i,j\in[k]} \left| A_{i,j}-B_{i,j}\right|,\quad A,B\in\overline{\mathcal{A}\cup\mathcal{B}}.$$

So from \eqref{eq:dpioverPhidt},
$$\left|\left| \frac{\mathrm{d}}{\mathrm{d}t} \left( \frac{\pi(t)}{\Phi(t)}\right)\right|\right|_1 \le C\cdot \frac{\phi(t)}{\Phi(t)} \cdot\max_{i,j}\kappa_{i,j}\pi_j(t) \le C \phi(t)\kappa_{\max}.$$

Therefore, if we write $g(t):=\frac{\mathrm{d}}{\mathrm{d}t} \left( \frac{\pi(t)}{\Phi(t)}\right)$, we have
$$\int_{t_c}^\infty ||g(t)||_1 \mathrm{d}t \le C \kappa_{\max} \int_{t_c}^\infty \phi(t)\mathrm{d}t \le C\kappa_{\max}\Phi(0)<\infty,$$
and it follows that $\frac{\pi(t)}{\Phi(t)}$ converges as $t\rightarrow\infty$.\end{proof}

\begin{proof}[Proof of Theorem \ref{flowtimelimitthm}]
It remains to show that the limit given by Proposition \ref{limexistsprop} is positive. For this, we use an argument similar to the proof of Lemma \ref{defnetalemma}, but now using the statement of Theorem \ref{weaklimitthm} to give stronger bounds involving $\Phi$.

\par
Recall that $\pi(0)$ and $\kappa$ are fixed. Now, for each $N\in\N$, we take $N$ IID samples from $\pi(0)$, and let $p^N\in \N_0^k$ be the vector recording the number of occurences of each type. Clearly, by WLLN $p^N/N \stackrel{d}\rightarrow \pi(0)$ as $N\rightarrow\infty$. We will consider coupling frozen percolation processes with initial types given by $p^N$, as $N$ varies.

\par
Fix some sequence $\lambda:\N\rightarrow(0,\infty)$ satisfying the usual critical scaling \eqref{eq:lambdascaling}. Observe that there is a natural coupling between the processes $\mathcal{G}^{N,p^N,\kappa,\lambda(N)}$ and $\mathcal{G}^{N+1,p^{N+1},\kappa,\lambda(N)}$ under which the restriction of the latter to $[N]$ is equal to the former until the first time an edge is added between $N+1$ and an alive vertex in $[N]$. (This time might be zero, if there is such an edge in the initial graph $\mathcal{G}^{N+1,p^{N+1},\kappa,\lambda(N)}$.)

Now fix a time $T>0$. Theorem \ref{weaklimitthm} applies to both sequences of processes $(\mathcal{G}^{N,p^N,\kappa,\lambda(N)})$ and $(\mathcal{G}^{N+1,p^{N+1},\kappa,\lambda(N)})$, since we also have $1/N\ll \lambda(N-1)\ll 1$. While this theorem is stated in terms of convergence in distribution, it also holds in expectation since the processes $\pi^N$ are uniformly bounded in $\R^k$. Thus, for each $i\in[k]$,
\begin{align*}
\pi_i(T)&= \lim_{N\rightarrow\infty}\E{\pi_i^{N+1}(T)}\\
&= \lim_{N\rightarrow\infty}\Prob{\mathrm{type}(N+1)=i,\text{ $N+1$ alive in }\mathcal{G}^{N+1,p^{N+1},\kappa,\lambda(N)}(T)}.
\end{align*}

Although it leads to a weaker bound, it is more convenient to consider the probability that vertex $N+1$ is both alive \emph{and isolated} in $\mathcal{G}^{N+1,p^{N+1},\kappa,\lambda(N)}(T)$. This event is particularly tractable under the coupling proposed above. As long as $N+1$ is isolated, an edge forms between $N+1$ and $[N]$ at rate $\frac{1}{N}\#\{\text{alive vertices in }[N]\}$. So, if $\Phi^N(t)$ remains the proportion of alive vertices in $\mathcal{G}^{N,p^N,\kappa,\lambda(N)}$, we can control the probability that $N+1$ remains isolated in $\mathcal{G}^{N+1,p^{N+1},\kappa,\lambda(N)}$ \emph{conditional} on the evolution of $\mathcal{G}^{N,p^N,\kappa,\lambda(N)}$. That is,
\begin{align*}
&\Prob{\text{$N+1$ alive and isolated in }\mathcal{G}^{N+1,p^{N+1},\kappa,\lambda(N)}(T)\,\Big|\, \mathcal{G}^{N,p^N,\kappa,\lambda(N)}\big|_{[0,T]}}\\
&\quad = \Prob{N+1\text{ isolated in }\mathcal{G}^{N+1,p^{N+1},\kappa,\lambda(N)}(0)}\\
&\qquad \times\Prob{N+1\text{ not struck by lightning on }[0,T]}\times \exp\left(-\int_0^T \Phi^N(s)\mathrm{d}s\right).
\end{align*}

Since the second and third probabilities are independent of the type of $N+1$, we can include this in the calculation. Then,
\begin{align}
&\Prob{\mathrm{type}(N+1)=i,\,\text{$N+1$ alive and isolated in }\mathcal{G}^{N+1,p^{N+1},\kappa,\lambda(N)}(T)\,\Big|\, \mathcal{G}^{N,p^N,\kappa,\lambda(N)}\big|_{[0,T]}}\nonumber\\
&\quad = \Prob{\mathrm{type}(N+1)=i,N+1\text{ isolated in }\mathcal{G}^{N+1,p^{N+1},\kappa,\lambda(N)}(0)}\nonumber\\
&\qquad \times\Prob{N+1\text{ not struck by lightning on }[0,T]}\times \exp\left(-\int_0^T \Phi^N(s)\mathrm{d}s\right).\label{eq:PN+1isolated}
\end{align}

Only the third of these terms is random. We now consider its expectation. Note that the map $f \mapsto \exp\left(-\int_0^T f(s)\mathrm{d}s\right)$ from $C_b([0,T])$ to $\R$ is continuous with respect to the uniform topology on $[0,T]$. Since $\Phi^N\stackrel{d}\rightarrow \Phi$ uniformly on $[0,T]$, it follows that
$$\lim_{N\rightarrow\infty}\E{\exp\left(-\int_0^T \Phi^N(s)\mathrm{d}s\right)} = \exp\left(-\int_0^T \Phi(s)\mathrm{d}s\right)\stackrel{\eqref{eq:upbdPhi}}\ge  \exp\left(-1-\int_1^T \frac{\mathrm{d}s}{s} \right)=\frac{1}{Te}.$$

So, from \eqref{eq:PN+1isolated} and the law of total probability,
\begin{align}
\pi_i(T)&\ge \limsup_{N\rightarrow\infty}\,\Prob{\mathrm{type}(N+1)=i,\text{ $N+1$ alive and isolated in }\mathcal{G}^{N+1,p^{N+1},\kappa,\lambda(N)}(T)}\nonumber\\
&\ge \left[\lim_{N\rightarrow\infty} \frac{p_i^{N+1}}{N+1}e^{-\kappa_{\max}}\right] \left[\lim_{N\rightarrow\infty} e^{-\lambda(N+1)T}\right] \lim_{N\rightarrow\infty}\E{\exp\left( -\int_0^T \Phi^N(s)\mathrm{d}s\right)}\nonumber\\
&\ge \pi_i(0)e^{-\kappa_{\max}}\cdot \frac{1}{Te}.\label{eq:lowbdpiT}
\end{align}

Combining \eqref{eq:upbdPhi} and \eqref{eq:lowbdpiT}, we obtain
$$\frac{\pi_i(T)}{\Phi(T)} \ge \frac{\pi_i(0) e^{-(\kappa_{\max}+1)}/T}{1/T}=\pi_i(0)e^{-(\kappa_{\max}+1)},$$
and thus $\lim_{T\rightarrow\infty}\frac{\pi(T)}{\Phi(T)}$ has positive components.
\end{proof}
\end{prop}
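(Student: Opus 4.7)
The plan is to prove Theorem \ref{flowtimelimitthm} in three stages, mirroring its two assertions (existence of the limit, and its positivity).

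\textbf{Stage 1: Decay of total mass.} First I would argue that $\Phi(t) := \|\pi(t)\|_1 \to 0$ as $t \to \infty$. By the Collatz--Wielandt characterisation, $\rho$ is monotone non-decreasing in each entry of its non-negative matrix argument. Since $\kappa(t) \circ \pi(t) \geq t\mathbf{1} \circ \pi(t)$ entrywise, the criticality condition \eqref{eq:critcondition} gives $1 \geq \rho(t\mathbf{1} \circ \pi(t))$ for $t \geq t_c$. Noting that $(1,\ldots,1)^T$ is a right-eigenvector of $\mathbf{1} \circ \pi(t)$ with eigenvalue $\Phi(t)$ yields $\Phi(t) \leq 1/t$.

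\textbf{Stage 2: Convergence of the direction.} Differentiating $\Phi$ via \eqref{eq:flowDE} gives $\Phi'(t) = -\phi(t)$, since $\|\mu(\cdot)\|_1 = 1$. The quotient rule then produces
\begin{equation*}
\frac{d}{dt}\left(\frac{\pi(t)}{\Phi(t)}\right) \;=\; \frac{\phi(t)}{\Phi(t)}\left(\frac{\pi(t)}{\Phi(t)} - \mu(\kappa(t) \circ \pi(t))\right).
\end{equation*}
The crucial algebraic observation is that $\pi(t)/\Phi(t)$ is itself the normalised left Perron eigenvector of the rank-one kernel $t\mathbf{1} \circ \pi(t)$, so $\pi(t)/\Phi(t) = \mu(t\mathbf{1} \circ \pi(t))$. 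Thus the right-hand side is a difference of two $\mu$-values. By Stage 1 the matrices $t\mathbf{1} \circ \pi(t)$ and $(\kappa + t\mathbf{1}) \circ \pi(t)$ are uniformly bounded in $\R_{\ge 0}^{k\times k}$ for $t \geq t_c$, and their rows are either identically zero or strictly positive, so the Perron root is always simple. A Lipschitz estimate of the type in Lemma \ref{muLipprop2}, applied on the closure of these matrix trajectories, then yields
\begin{equation*}
\|\mu(t\mathbf{1} \circ \pi(t)) - \mu((\kappa+t\mathbf{1}) \circ \pi(t))\|_1 \;\leq\; C \kappa_{\max}\,\Phi(t),
\end{equation*}
for a constant $C$ depending only on $\kappa$ and $\pi(0)$. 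Hence $\|(d/dt)(\pi/\Phi)\|_1 \leq C\kappa_{\max}\phi(t)$, and since $\int_{t_c}^\infty \phi(t)\,dt = \Phi(t_c) - \lim_{t\to\infty}\Phi(t) \le 1$, this derivative is absolutely integrable and the limit $\pi^* := \lim_{t \to \infty} \pi(t)/\Phi(t)$ exists in $\Pi_1$.

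\textbf{Stage 3: Positivity of the limit.} The delicate part is showing each component of $\pi^*$ is strictly positive, and I expect this to be the main obstacle: we need a lower bound on $\pi_i(T)$ of matching order $1/T$ to cancel the upper bound $\Phi(T) \leq 1/T$. I would invoke Theorem \ref{weaklimitthm} to write $\pi_i(T) = \lim_N \mathbb{E}[\pi_i^N(T)]$ for a sequence of discrete processes $\mathcal{G}^{N,p^N,\kappa,\lambda(N)}$ with $p^N/N \to \pi(0)$ and $\lambda(N)$ satisfying \eqref{eq:lambdascaling}. I would then lower-bound $\pi_i^N(T)$ by the probability that a tagged vertex of type $i$ is \emph{alive and isolated} at time $T$. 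Using the natural coupling between processes on $[N]$ and $[N+1]$ (in which the restriction of the larger process equals the smaller until an edge forms to $N+1$), the conditional probability that $N+1$ remains isolated on $[0,T]$ given the smaller process factors as (i) initial isolation, with limiting probability $\exp(-\sum_j \kappa_{i,j}\pi_j(0)) \geq e^{-\kappa_{\max}}$; (ii) no lightning strike, with probability $\exp(-\lambda(N)T) \to 1$; and (iii) no edge arriving between $N+1$ and any alive vertex of the $N$-process, which has conditional probability $\exp(-\int_0^T \Phi^N(s)\,ds)$. Taking $N \to \infty$ and using continuity, the uniform convergence $\Phi^N \to \Phi$, and the bound $\Phi(s) \leq 1/s$ from Stage 1 yields $\mathbb{E}[\exp(-\int_0^T \Phi^N(s)\,ds)] \to \exp(-\int_0^T \Phi(s)\,ds) \geq 1/(eT)$. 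Combining these three factors gives $\pi_i(T) \geq c\,\pi_i(0)/T$ for a constant $c > 0$ independent of $T$, and dividing by $\Phi(T) \leq 1/T$ yields $\pi_i(T)/\Phi(T) \geq c\,\pi_i(0) > 0$ uniformly in $T \geq 1$. Hence $\pi^*_i > 0$ for every $i \in [k]$, completing the proof.
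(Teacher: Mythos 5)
Your proposal reproduces the paper's argument essentially verbatim: Stage 1 is the Collatz--Wielandt bound $\Phi(t)\le 1/t$, Stage 2 is the observation that $\pi/\Phi=\mu(t\mathbf{1}\circ\pi(t))$ combined with the Lipschitz property of $\mu$ on a compact matrix set (Lemma \ref{muLipprop2}) to obtain absolute integrability of the derivative, and Stage 3 is the coupling argument between the $N$- and $(N+1)$-vertex processes giving the lower bound $\pi_i(T)\ge c\pi_i(0)/T$ on the alive-and-isolated probability. The reasoning, key lemmas, and even the constant structure match the paper's proof throughout, so there is nothing to add.
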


\section{Proofs of technical lemmas}\label{technicalproofs}

\subsection{Matrices and eigenvectors}

\subsubsection{Minorisation and majorisation}\label{majminsection}
We restate and prove Lemma \ref{minorisation}, a minorisation result for kernel-subdistribution pair,s which was used in the proof of Proposition \ref{neversupercritprop}. We also state a corresponding majorisation lemma, which will be used in Section \ref{LDPproof} for the proof of Proposition \ref{LDPprop}.

\begin{lemma*}For any $0<\bar\Lambda<\Lambda$, and $K<\infty$ there exist $M\in \N$, and $\pi^{(1)},\ldots,\pi^{(M)}\in\Pi_{\le 1}$ and kernels $\kappa^{(1)},\ldots,\kappa^{(M)}\in\R_{\ge 0}^{k\times k}$ such that
\begin{itemize}
\item $\rho(\kappa^{(m)}\circ \pi^{(m)})=\bar \Lambda$ for each $m\in[M]$;
\item for any subdistribution $\pi\in\Pi_{\le 1}$ and kernel $\kappa\in[0,K]^{k\times k}$ with $\rho(\kappa\circ \pi)\ge \Lambda$, there is some $m\in[M]$ for which $\pi^{(m)}\le \pi$ and $\kappa^{(m)}\le \kappa$.
\end{itemize}
\begin{remark}
The condition $\kappa_{\max}\le K$ is necessary. Otherwise, consider
$$\kappa_{i,j}=\begin{cases}L&\quad i=j=1\\ \tfrac{1}{L}&\quad \text{otherwise},\end{cases}\qquad \pi=\left(\tfrac{\Lambda}{L},\ldots,\tfrac{\Lambda}{L}\right),$$
and allow $L\rightarrow\infty$.
\end{remark}
\begin{proof}[Proof of Lemma \ref{minorisation}]
The result is clear when
$$\mathbb{A}(K,\Lambda):=\left\{(\kappa,\pi)\in[0,K]^{k\times k}\times \Pi_{\le 1}\,:\, \rho(\kappa\circ\pi)\ge\Lambda\right\}$$
is either empty or consists of one measure-kernel pair.

\par
Otherwise, we view $\rho$ as a continuous function $\R^{k\times k}_{\ge 0}\times \Pi_{\le 1} \to \R_{\ge 0}$ via $\kappa\circ\pi$, and so $\mathbb{A}(K,\Lambda)$  is compact. Now, for any $\kappa,\kappa^0\in\R_{\ge 0}^{k\times k}$, we say
$\kappa\triangleright\kappa^0$ if for all $i,j\in[k]$,
$$\begin{cases}\kappa_{i,j}\ge 0&\quad \text{when }\kappa_{i,j}^0=0\\ \kappa_{i,j}>\kappa^0_{i,j} &\quad \text{when }\kappa^0_{i,j}>0.\end{cases}$$
Then, for any $\kappa^0\in\R_{\ge 0}^{k\times k}$, the set $\{\kappa\in\R_{\ge 0}^{k\times k}\,:\, \kappa\triangleright \kappa^0\}$, is open in the subset topology induced on $\R_{\ge 0}^{k\times k}$. We also define the relation $\triangleright$ on $\R^k$ in an exactly equivalent fashion.

\par
Now, for any $(\kappa^0,\pi^0)\in[0,K]^{k\times k}\times\Pi_{\le 1}$, with $\rho(\kappa^0\circ \pi^0)=\bar \Lambda$, the set
$$\left\{(\kappa,\pi)\in[0,K]^{k\times k}\times \Pi_{\le 1}\,:\, \rho(\kappa\circ \pi)>\tfrac{\Lambda+\bar\Lambda}{2},\, \kappa\triangleright \kappa^0,\,\pi\triangleright \pi^0\right\},$$
is open in $[0,K]^{k\times k}\times \Pi_{\le 1}$, and so its restriction to $\mathbb{A}(K,\Lambda)$,
$$N(\kappa^0,\pi^0):=\left\{(\kappa,\pi)\in\R_{\ge 0}^{k\times k}\times \R_{\ge 0}^k\,:\, \rho(\kappa\circ \pi)\ge\Lambda,\, \kappa\triangleright \kappa^0,\,\pi\triangleright \pi^0\right\},$$
is also open in the subset topology induced on $\mathbb{A}(K,\Lambda)$. But for any $(\kappa,\pi)\in \mathbb{A}(K,\Lambda)$, with $\Lambda'=\rho(\kappa\circ\pi)$, we have
$$\rho\left(\sqrt{\tfrac{\bar\Lambda}{\Lambda'}}\kappa\circ \sqrt{\tfrac{\bar\Lambda}{\Lambda'}}\pi\right)=\bar\Lambda,\quad\text{and}\quad (\kappa,\pi)\in N\left(\sqrt{\tfrac{\bar\Lambda}{\Lambda'}}\kappa,\sqrt{\tfrac{\bar\Lambda}{\Lambda'}}\pi\right).$$ Therefore, the sets $N(\kappa^0,\pi^0)$ cover $\mathbb{A}(K,\Lambda)$. Thus there is a finite sub-cover given by some $N(\kappa^{(1)},\pi^{(1)})$, $\ldots$, $N(\kappa^{(M)},\pi^{(M)})$. Certainly if $\pi \triangleright \pi^{(m)}$ and $\kappa\triangleright\kappa^{(m)}$, then $\pi\ge \pi^{(m)}$ and $\kappa\ge \kappa^{(m)}$, as required.
\end{proof}
\end{lemma*}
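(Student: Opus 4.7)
The plan is to use a compactness argument. First, I would observe that the set
$$\mathbb{A}(K,\Lambda):=\{(\kappa,\pi)\in[0,K]^{k\times k}\times \Pi_{\le 1}\,:\, \rho(\kappa\circ\pi)\ge\Lambda\}$$
is compact, as a closed subset of the compact space $[0,K]^{k\times k}\times \Pi_{\le 1}$, using continuity of the Perron root $\rho$ in the matrix entries. The goal is to produce an open cover of $\mathbb{A}(K,\Lambda)$, each member of which is dominated (in the sense of component-wise inequality) by a pair with Perron root exactly $\bar\Lambda$; the finite family in the statement then comes from extracting a finite subcover.

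The key scaling observation is that $\rho(c\kappa\circ c\pi) = c^2 \rho(\kappa\circ\pi)$ for any $c>0$, since $(c\kappa)\circ(c\pi) = c^2(\kappa\circ\pi)$. So for any $(\kappa,\pi)\in\mathbb{A}(K,\Lambda)$, setting $c = \sqrt{\bar\Lambda/\rho(\kappa\circ\pi)}<1$, the scaled pair $(\kappa^0,\pi^0) := (c\kappa, c\pi)$ has Perron root exactly $\bar\Lambda$ and is strictly less than $(\kappa,\pi)$ in every coordinate where $\kappa$ (resp. $\pi$) is strictly positive.

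Next, I would associate to $(\kappa,\pi)$ the neighbourhood consisting of those $(\kappa',\pi')$ satisfying $\kappa'_{i,j}>\kappa^0_{i,j}$ whenever $\kappa^0_{i,j}>0$ (and only $\kappa'_{i,j}\ge 0$ otherwise), and analogously for $\pi'$ versus $\pi^0$. Each such set is open in $[0,K]^{k\times k}\times \Pi_{\le 1}$, because the finitely many defining conditions are either trivial or strict inequalities at finitely many coordinates. These neighbourhoods cover $\mathbb{A}(K,\Lambda)$ because $(\kappa,\pi)$ itself lies in its own neighbourhood (the strict inequality $c<1$ is what makes this work on the support, while zero entries are handled trivially). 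Compactness gives a finite subcover indexed by pairs $(\kappa^{(1)},\pi^{(1)}),\dots,(\kappa^{(M)},\pi^{(M)})$, and by construction each $(\kappa,\pi)\in\mathbb{A}(K,\Lambda)$ dominates at least one of these.

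The main subtlety is the treatment of zero entries: if some $\kappa_{i,j}$ or $\pi_j$ vanishes, one cannot demand a strict inequality in the neighbourhood condition, and must allow equality at those coordinates. This requires introducing an auxiliary ``strictly dominates on the support'' relation (for which the sets remain open), but once this bookkeeping is set up the domination property is immediate. A small preliminary remark I would include is that the hypothesis $\kappa\in[0,K]^{k\times k}$ cannot be dropped, since without an upper bound one could send a single entry of $\kappa$ to infinity while the complementary entries of $\pi$ shrink, breaking any finite minorisation; this also highlights why the compactness of $[0,K]^{k\times k}$ is essential to the argument.
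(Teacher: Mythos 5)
Your proof is correct and follows essentially the same compactness argument as the paper: the same scaling observation $\rho(c\kappa\circ c\pi)=c^2\rho(\kappa\circ\pi)$ to produce a minorant with Perron root exactly $\bar\Lambda$, the same open cover by ``strictly-dominates-on-the-support'' neighbourhoods (the paper's relation $\triangleright$), and the same finite subcover extraction. The subtlety you flag about zero entries is precisely what the paper's $\triangleright$ is designed to handle.
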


\begin{lemma}\label{majorisation}
For any $0<\Lambda<\bar\Lambda$ and $\eta\in(0,1)$, there exist $M\in\N$ and $\pi^{(1)},\ldots,\pi^{(M)}\in\Pi_{\le 1}$ and kernels $\kappa^{(1)},\ldots,\kappa^{(M)}\in\R_{\ge 0}^{k\times k}$ such that
\begin{itemize}
\item $\rho(\kappa^{(m)}\circ \pi^{(m)}) =\bar\Lambda$ for each $m\in[M]$;
\item for any subdistribution $\pi\in\Pi_{\le 1}\cap[\eta,1]$ and kernel $\kappa\in\R_{\ge 0}^{k\times k}$, with $\rho(\kappa\circ\pi)\le \Lambda$, there is some $m\in[M]$ for which $\pi\le \pi^{(m)}$ and $\kappa\le \kappa^{(m)}$.
\end{itemize}
\end{lemma}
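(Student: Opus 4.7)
The plan is to mirror the covering argument of Lemma \ref{minorisation}, combined with a uniform upper bound on $\kappa$ that comes from the lower bound $\pi_i\ge\eta$; together these will place the problem on a compact set, to which the Heine--Borel property then supplies the required finite list.

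First I would show that any feasible $(\kappa,\pi)$, i.e.\ with $\rho(\kappa\circ\pi)\le\Lambda$, has $\kappa$ uniformly bounded. Since $\kappa$ is symmetric, the matrix $\kappa\circ\pi$ is similar via the diagonal matrix $D=\mathrm{diag}(\sqrt{\pi_1},\ldots,\sqrt{\pi_k})$ to the symmetric non-negative matrix $D\kappa D$, and so shares its Perron root. For any symmetric non-negative matrix $B$, the Rayleigh quotient at $e_i+e_j$ gives $\rho(B)\ge B_{i,j}$, hence
\[
\rho(\kappa\circ\pi)=\rho(D\kappa D)\ge\sqrt{\pi_i\pi_j}\,\kappa_{i,j}\ge\eta\kappa_{i,j}.
\]
Feasibility thus forces $\kappa_{i,j}\le\Lambda/\eta$ for all $i,j$, so the feasible set lies in the compact product $\mathcal{C}:=[0,\Lambda/\eta]^{k\times k}\times(\Pi_{\le 1}\cap[\eta,1]^k)$ and is closed there by continuity of $\rho$, hence itself compact.

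Next, for each feasible $(\kappa^0,\pi^0)$ I would construct an open neighbourhood $N(\kappa^0,\pi^0)$ together with a dominating pair $(\tilde\kappa,\tilde\pi)$ satisfying $\rho(\tilde\kappa\circ\tilde\pi)=\bar\Lambda$. The candidate is $\tilde\kappa=\kappa^0+\epsilon J$, where $J$ is the all-ones $k\times k$ matrix, and $\tilde\pi=\pi^0+\delta\mathbf{1}$, with $(\epsilon,\delta)$ calibrated so that $\rho(\tilde\kappa\circ\tilde\pi)=\bar\Lambda$; this is possible because $\rho$ is continuous and strictly increasing in each entry of its argument (by the Collatz--Wielandt formula \eqref{eq:CWformula} applied to positive matrices). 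Then $\tilde\kappa>\kappa^0$ and $\tilde\pi>\pi^0$ entrywise, so the set
\[
N(\kappa^0,\pi^0):=\{(\kappa,\pi)\in\mathcal{C}:\kappa_{i,j}<\tilde\kappa_{i,j},\;\pi_j<\tilde\pi_j\text{ for all }i,j\}
\]
is an open neighbourhood of $(\kappa^0,\pi^0)$ in $\mathcal{C}$, and every pair in $N(\kappa^0,\pi^0)$ is coordinatewise dominated by $(\tilde\kappa,\tilde\pi)$. Extracting a finite subcover of the compact feasible set from $\{N(\kappa^0,\pi^0)\}$ then delivers the desired list $(\kappa^{(m)},\pi^{(m)})_{m=1}^M$.

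The delicate point will be ensuring $\tilde\pi$ lies in $\Pi_{\le 1}$ while strictly dominating $\pi^0$ coordinatewise when $\|\pi^0\|_1=1$, since no subdistribution strictly dominates a probability distribution. The clean remedy is to allow $\tilde\pi\in[0,1]^k$ rather than insisting $\tilde\pi\in\Pi_{\le 1}$; this is all that is needed for the stochastic-domination comparison of IRGs used when the lemma is invoked in Section \ref{LDPproof}, where only the coordinatewise inequalities $\pi\le\pi^{(m)}$ and $\kappa\le\kappa^{(m)}$ matter. With this relaxation the construction above goes through uniformly in $(\kappa^0,\pi^0)$, with the bound $\tilde\pi_j\le 1$ obtained by choosing $\delta<\eta$ (so that $\tilde\pi_j\le 1-(k-1)\eta+\delta\le 1$ whenever $k\ge 2$).
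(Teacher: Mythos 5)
Your proof is correct, and it tracks the covering argument of Lemma \ref{minorisation} while supplying the one ingredient that is genuinely new here: the compactness of the feasible set. In the minorisation lemma the kernels are a priori confined to $[0,K]^{k\times k}$, whereas in the majorisation statement $\kappa$ ranges over all of $\R_{\ge 0}^{k\times k}$, so a bound must be derived. Your Rayleigh-quotient argument via the symmetric conjugate $D\kappa D$ yields $\rho(\kappa\circ\pi)\ge\sqrt{\pi_i\pi_j}\,\kappa_{i,j}\ge\eta\kappa_{i,j}$ cleanly. The additive perturbation $(\kappa^0+\epsilon J,\pi^0+\delta\mathbf{1})$ is a mild variant of the multiplicative scaling $\sqrt{\bar\Lambda/\Lambda'}(\kappa,\pi)$ used in the minorisation proof; either calibration works, and the additive version has the small advantage of avoiding the degeneracy at $\Lambda'=\rho(\kappa\circ\pi)=0$, which can occur here since $\kappa$ is allowed zero entries.

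You have also identified a genuine defect in the statement as written: the requirement $\pi^{(m)}\in\Pi_{\le 1}$ cannot be met. Indeed if $\|\pi\|_1=1$ and $\pi^{(m)}\ge\pi$ with $\pi^{(m)}\in\Pi_{\le 1}$, then necessarily $\pi^{(m)}=\pi$; for $k\ge 2$ there is a continuum of such $\pi$ in $\Pi_1\cap[\eta,1]^k$, so no finite list can suffice. (The minorisation lemma sidesteps this because scaling \emph{down} preserves $\Pi_{\le 1}$; scaling \emph{up} necessarily risks leaving it.) Your remedy of relaxing to $\pi^{(m)}\in[0,1]^k$ is the right idea and, with $\delta<\eta$, gives $\tilde\pi_j\le 1-(k-1)\eta+\delta\le 1$ for $k\ge 2$. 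You correctly observe that the only downstream use — the stochastic-domination step \eqref{eq:stochdomforLDP} in the proof of Proposition \ref{LDPprop}, together with applications of \eqref{eq:BJRLDP} and Lemma \ref{O1boundzeta} — needs only the coordinatewise inequalities, not $\pi^{(m)}\in\Pi_{\le 1}$ (nor in fact $\pi^{(m)}\in[0,1]^k$). The one loose end is $k=1$: there the $[0,1]^k$ relaxation also fails when $\eta<\Lambda/\bar\Lambda$, so either treat $k=1$ separately (it is trivial, since in Proposition \ref{LDPprop} one has $\pi_1\equiv 1$) or drop the box constraint entirely and allow $\pi^{(m)}\in\R_{\ge 0}^k$, which is what the application actually permits.
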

Since the proof is very similar to that of Lemma \ref{minorisation}, it is omitted.

\subsubsection{Matrix powers and the principal eigenvector}\label{Perronproofs}
The goal of this section is to prove Lemma \ref{convevector}. 

We write $S^{k\times k}([\eta,T])$ for the set of $k\times k$ symmetric matrices with entries in $[\eta,T]$. For $A$ a real symmetric positive matrix, we let $\bar \mu(A)$ be the principal left-eigenvector of $A$, normalised so that $||\bar\mu(A)||_2 = 1$. We will work with $\bar\mu(A)$ in the following result, and convert the statement to the language of $\mu(A)$ (as defined earlier) at the end.
\par
We will also work with $\Pi_{\le 1}\cap [\eta,1]^k$, the set of subdistributions where every component is at least $\eta$.

\begin{lemma}Fix $0<\eta<T<\infty$. Then,
\begin{equation}\label{eq:realsymlimit}\lim_{R\rightarrow\infty}\sup_{A\in S^{k\times k}([\eta,T])}\sup_{v\in\Pi_{\le 1}} \left|\left|\frac{v A^R}{\rho(A)^R} - \langle v,\bar\mu(A)\rangle \bar\mu(A) \right|\right|_1 =0.\end{equation}

\begin{proof}
For a real positive symmetric matrix $A$, we define
$$\Lambda_2(A):= \sup\{|\lambda|: \lambda\text{ an eigenvalue of }A,\, \lambda\ne \rho(A)\},$$
to be the absolute value of the `second-largest' eigenvalue of $A$, which is strictly less than $\rho(A)$. But $\rho(A)$ and $\Lambda_2(A)$ are well-defined and continuous on the compact domain $S^{k\times k}([\eta,T])$. This continuity can be shown by considering the characteristic polynomial of $A$ and applying standard results (see \cite{Zedek65} and references therein) concerning the roots of monic polynomials under continuously varying the coefficients. Then
\begin{equation}\label{eq:supLambda2}\theta(\eta,T):=\sup \left\{\frac{\Lambda_2(A)}{\rho(A)}: A\in S^{k\times k}([\eta,T])\right\} <1.\end{equation}
Now, let $\{\bar\mu(A),\mu^{(2)}(A),\ldots,\mu^{(k)}(A)\}$ be a set of orthonormal eigenvectors of $A$, where $\bar \mu(A)$ corresponds to the Perron root $\rho(A)$. As usual, any $v\in \R^k$ can be expressed as
$$v= \langle v,\bar\mu(A)\rangle \bar \mu(A) + \langle v,\mu^{(2)}(A)\rangle \mu^{(2)}(A) + \ldots + \langle v,\mu^{(k)}(A)\rangle \mu^{(k)}(A),$$
and so
$$\left|\left|\frac{v A^R}{\rho(A)^R} - \langle v,\bar \mu(A)\rangle \bar \mu(A) \right|\right|_1 \le \theta(\eta,T)^R \sum_{i=2}^k \Big| \langle v,\mu^{(i)}(A) \rangle \Big| ||\mu^{(i)}(A)||_1.$$
But since $v\in\Pi_{\le 1}$,
$$\left| \langle v,\mu^{(i)}(A)\rangle \right| \le ||\mu^{(i)}(A)||_1 \le \sqrt{k},$$
by Cauchy--Schwarz, since $||\mu^{(i)}(A)||_2=1$. Therefore
$$\left|\left|\frac{v A^R}{\rho(A)^R} - \langle v,\bar \mu(A)\rangle \bar \mu(A) \right|\right|_1 \le \theta(\eta,T)^R \cdot (k-1)\sqrt{k},$$
and the required result \eqref{eq:realsymlimit} follows.
\end{proof}
\end{lemma}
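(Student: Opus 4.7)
The plan is to exploit the spectral theorem directly. Every $A \in S^{k\times k}([\eta, T])$ is real symmetric with strictly positive entries, so Perron--Frobenius guarantees that $\rho(A)$ is a \emph{simple} eigenvalue that strictly dominates the others in absolute value. I would pick an orthonormal basis $\{\bar\mu(A) = \mu^{(1)}(A), \mu^{(2)}(A), \ldots, \mu^{(k)}(A)\}$ of eigenvectors with eigenvalues $\lambda_1(A) = \rho(A), \lambda_2(A), \ldots, \lambda_k(A)$, expand $v$ in this basis, and obtain the clean identity
\begin{equation*}
\frac{v A^R}{\rho(A)^R} - \langle v, \bar\mu(A)\rangle \bar\mu(A) \;=\; \sum_{i=2}^k \left(\frac{\lambda_i(A)}{\rho(A)}\right)^{\!R} \langle v, \mu^{(i)}(A)\rangle \, \mu^{(i)}(A).
\end{equation*}

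Next, I would bound each of the $k-1$ summands uniformly in $v$ and $A$. For the scalar coefficient, Cauchy--Schwarz gives $|\langle v, \mu^{(i)}(A)\rangle| \le \|v\|_2 \le \|v\|_1 \le 1$, using $v \in \Pi_{\le 1}$ and $\|\mu^{(i)}(A)\|_2 = 1$. For the vector factor, a second application of Cauchy--Schwarz gives $\|\mu^{(i)}(A)\|_1 \le \sqrt{k} \|\mu^{(i)}(A)\|_2 = \sqrt{k}$. These estimates reduce the problem to showing a uniform geometric decay of the ratio $\Lambda_2(A)/\rho(A)$ below $1$ on $S^{k\times k}([\eta, T])$.

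The main obstacle is precisely this uniform spectral gap. My plan is to use compactness of $S^{k\times k}([\eta,T])$ together with continuity of the two relevant eigenvalue-valued functions. Continuity of $\rho(A)$ and $\Lambda_2(A)$ follows from standard results (e.g.\ \cite{Zedek65}) on continuous dependence of the roots of monic polynomials on their coefficients, applied to the characteristic polynomial of $A$, the coefficients of which depend polynomially and hence continuously on the entries of $A$. Once continuity is in hand, strict Perron--Frobenius for positive matrices gives $\Lambda_2(A)/\rho(A) < 1$ pointwise, and a compact domain then yields
\begin{equation*}
\theta \;:=\; \sup_{A \in S^{k\times k}([\eta,T])} \frac{\Lambda_2(A)}{\rho(A)} \;<\; 1.
\end{equation*}
Combining this with the termwise estimate above bounds the $\ell_1$ norm of the residual by $(k-1)\sqrt{k}\, \theta^R$, which tends to zero as $R \to \infty$ uniformly in $A$ and $v$, completing the argument.
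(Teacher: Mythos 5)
Your proposal is correct and takes essentially the same approach as the paper: a uniform spectral gap $\theta<1$ obtained from compactness of $S^{k\times k}([\eta,T])$ together with continuity of $\rho$ and $\Lambda_2$, followed by an orthonormal eigenbasis expansion and Cauchy--Schwarz to bound the residual by $(k-1)\sqrt{k}\,\theta^R$. Your intermediate estimate $|\langle v,\mu^{(i)}(A)\rangle|\le \|v\|_2\le \|v\|_1\le 1$ is in fact stated a little more cleanly than the paper's, but the argument and final bound are identical.
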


We can now address the case of Lemma \ref{convevector} where all the matrices $D^{(i)}$ are $\kappa\circ\pi$.
\begin{lemma}\label{unifPerronproj}
Fix $0<\eta<T<\infty$. Then,
\begin{equation}\label{eq:unifperronproj}\lim_{R\rightarrow\infty}\sup_{\substack{\pi\in \Pi_{\le 1}\cap [\eta,1]^k\\\kappa\in[\eta,T]^{k\times k}}}\sup_{v\in\Pi_1} \left|\left|\frac{v (\kappa\circ\pi)^R}{||v(\kappa\circ\pi)^R||_1} - \mu(\kappa\circ\pi) \right|\right|_1 =0.\end{equation}
\begin{remark}The non-uniform version of \eqref{eq:unifperronproj} is due to Perron \cite{Perron07}, and the related limiting matrix is called the \emph{Perron projection}. Similar results appear in the multitype branching process literature, including \cite{JoffeSpitzer67}, for which \cite{AthreyaNey} offers a comprehensive summary.
\end{remark}
\begin{proof}
Instead of considering $\kappa \circ \pi$, we will study $\kappa\bullet \pi$, defined for $\kappa\in \R^{k\times k}, \pi\in \R_+^k$ by
\begin{equation}\label{eq:defnmatrixbullet} \left[\kappa\bullet \pi\right]_{i,j}:= \sqrt{\pi_i}\kappa_{i,j}\sqrt{\pi_j}.\end{equation}
The matrix $\kappa\bullet\pi$ is real and symmetric, which makes a treatment of its spectrum easier. First, we note that if $v$ is \emph{any} left-eigenvector of $\kappa\circ \pi$, with eigenvalue $\lambda$, then
$$\sum_{i=1}^k \left( \frac{v_i}{\sqrt{\pi_i}} \right)[\kappa\bullet \pi]_{i,j} = \sum_{i=1}^k v_i \kappa_{i,j}\sqrt{\pi_j} = \lambda\frac{v_j}{\sqrt{\pi_j}}.$$
That is $(v_i/\sqrt{\pi_i})$ is an eigenvector of $\kappa\bullet\pi$, also with eigenvalue $\lambda$. Therefore the spectrum of $\kappa\circ \pi$ is the same as the spectrum of $\kappa\bullet\pi$. In particular, the Perron roots of $\kappa\circ \pi$ and $\kappa\bullet\pi$ are the same, and $\mu(\kappa\bullet \pi)_i= C \mu(\kappa\circ \pi)_i/\sqrt{\pi_i}$, where $C$ is a positive constant chosen to ensure consistent normalisation.

\par
But then
\begin{equation}\label{eq:circtobullet}[v(\kappa\circ \pi)^R]_j= \left[ \left(\frac{v_1}{\sqrt{\pi_1}},\ldots, \frac{v_k}{\sqrt{\pi_k}}\right)(\kappa\bullet \pi)^R\right]_j \sqrt{\pi_j}.\end{equation}
Note that if $v\in\Pi_1$, then $(\frac{v_1}{\sqrt{\pi_1}},\ldots, \frac{v_k}{\sqrt{\pi_k}})\in \Pi_{\le \eta^{-1/2}}$, and certainly $\kappa\bullet\pi\in S^{k\times k}([\eta^2,T])$. The statement \eqref{eq:realsymlimit} still holds after replacing the supremum over $v\in\Pi_{\le 1}$ with a supremum over $v\in\Pi_{\le \eta^{-1/2}}$. So we can treat the RHS of \eqref{eq:circtobullet}, since
$$\left|\left|\left(\frac{v_1}{\sqrt{\pi_1}},\ldots, \frac{v_k}{\sqrt{\pi_k}}\right)\frac{(\kappa\bullet \pi)^R}{\rho^R} - \left\langle \left(\frac{v_1}{\sqrt{\pi_1}},\ldots, \frac{v_k}{\sqrt{\pi_k}}\right),\bar \mu(\kappa\bullet\pi) \right\rangle \bar\mu(\kappa\bullet\pi) \right|\right|_1\rightarrow 0,$$
as $R\rightarrow\infty$, uniformly across the set of $(\pi,\kappa)$ under consideration, and $v\in\Pi_1$. Note that for each $j\in[k]$, we have $\sqrt{\pi_j}\in[\sqrt{\eta},1]$. Therefore, uniformly in the same sense,
$$\frac{v(\kappa\circ \pi)^R}{\rho^R} \quad\rightarrow\quad \left\langle \left(\frac{v_1}{\sqrt{\pi_1}},\ldots, \frac{v_k}{\sqrt{\pi_k}}\right),\bar \mu(\kappa\bullet\pi) \right\rangle \Big(\bar\mu_1(\kappa\bullet\pi) \sqrt{\pi_1},\ldots,\bar\mu_k(\kappa\bullet\pi) \sqrt{\pi_k}\Big),$$
as $R\rightarrow\infty$, and so also
$$\left|\left|\frac{v(\kappa\circ \pi)^R}{\rho^R}\right|\right|_1 \quad\rightarrow\quad \left|\left|\left\langle \left(\frac{v_1}{\sqrt{\pi_1}},\ldots, \frac{v_k}{\sqrt{\pi_k}}\right),\bar \mu(\kappa\bullet\pi) \right\rangle \Big(\bar\mu_1(\kappa\bullet\pi) \sqrt{\pi_1},\ldots,\bar\mu_k(\kappa\bullet\pi) \sqrt{\pi_k}\Big)\right| \right|_1.$$

We want to show that this limiting quantity has a positive lower bound, so that we can take a limit of the quotients $\frac{v(\kappa\circ \pi)^R}{||[v(\kappa\circ \pi)^R]||_1}$. Since $\kappa\bullet\pi \in S^{k\times k}([\eta^2,T])$, we have $\rho(\kappa\bullet \pi) \le kT$ from \eqref{eq:CWformula}. Then, we can bound the components of $\mu(\kappa\bullet \pi)$ from below explicitly as
$$\mu_j = \frac{1}{\rho(\kappa\bullet\pi)} \sum_{i\in[k]} \mu_i [\kappa\bullet \pi]_{i,j} \ge \frac{1}{kT} \sum_{i\in[k]} \mu_i \eta^2 = \frac{\eta^2}{kT}.$$
Note also that $\bar \mu(\kappa\bullet\pi)\ge \mu(\kappa\bullet\pi)$. So, since $v\in\Pi_1$ and $\sqrt{\pi_i}\le 1$, we obtain
$$\left\langle \left(\frac{v_1}{\sqrt{\pi_1}},\ldots, \frac{v_k}{\sqrt{\pi_k}}\right),\bar \mu(\kappa\bullet\pi) \right\rangle
\ge \frac{\eta^2}{kT},$$
and, since $\sqrt{\pi_i}\ge \sqrt{\eta}$, we also obtain
$$\left|\left| \Big(\bar\mu_1(\kappa\bullet\pi) \sqrt{\pi_1},\ldots,\bar\mu_k(\kappa\bullet\pi) \sqrt{\pi_k}\Big)\right| \right|_1\ge \sqrt{\eta}.$$

Thus
$$\left|\left|\left\langle \left(\frac{v_1}{\sqrt{\pi_1}},\ldots, \frac{v_k}{\sqrt{\pi_k}}\right),\bar \mu(\kappa\bullet\pi) \right\rangle \Big(\bar\mu_1(\kappa\bullet\pi) \sqrt{\pi_1},\ldots,\bar\mu_k(\kappa\bullet\pi) \sqrt{\pi_k}\Big)\right| \right|_1 \ge \frac{\eta^{5/2}}{kT}>0.$$

So we obtain
$$\frac{v(\kappa\circ \pi)^R}{||v(\kappa\circ \pi)^R||_1} \quad\rightarrow\quad \frac{ \Big(\bar\mu_1(\kappa\bullet\pi) \sqrt{\pi_1},\ldots,\bar\mu_k(\kappa\bullet\pi) \sqrt{\pi_k}\Big)}{\left|\left| \Big(\bar\mu_1(\kappa\bullet\pi) \sqrt{\pi_1},\ldots,\bar\mu_k(\kappa\bullet\pi) \sqrt{\pi_k}\Big)\right|\right|_1}.$$

But
$$\Big(\bar\mu_1(\kappa\bullet\pi) \sqrt{\pi_1},\ldots,\bar\mu_k(\kappa\bullet\pi) \sqrt{\pi_k}\Big) \propto \Big(\mu_1(\kappa\bullet\pi) \sqrt{\pi_1},\ldots,\mu_k(\kappa\bullet\pi) \sqrt{\pi_k}\Big)\propto \mu(\kappa\circ \pi),$$
so we have shown
$$\frac{v(\kappa\circ \pi)^R}{||[v(\kappa\circ \pi)^R]||_1}\quad \rightarrow\quad \mu(\kappa\circ \pi),$$
as $R\rightarrow\infty$, uniformly across $v\in\Pi_1$, and $\kappa\in[\eta,T]^{k\times k}$ and $\pi\in\Pi_1$ such that $\pi_i\ge \eta$, exactly as required.
\end{proof}
\end{lemma}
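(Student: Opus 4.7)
The approach is to reduce to the already-established symmetric-matrix version via the symmetrization $[\kappa\bullet\pi]_{i,j}=\sqrt{\pi_i}\kappa_{i,j}\sqrt{\pi_j}$ introduced in the preceding lemma. Under the standing hypotheses $\kappa\in[\eta,T]^{k\times k}$ and $\pi_i\in[\eta,1]$, this symmetrized matrix lies in $S^{k\times k}([\eta^2,T])$, the set for which uniform convergence of the rescaled iterates to the Perron projection has already been proved. The whole task is therefore to transport this symmetric convergence back to $\kappa\circ\pi$, and then to replace the $\rho^R$-rescaling with the natural $\ell_1$-normalisation used in the statement.

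First I would record the spectral correspondence. If $v$ is a left-eigenvector of $\kappa\circ\pi$ with eigenvalue $\lambda$, then $(v_i/\sqrt{\pi_i})_i$ is a (left=right) eigenvector of the symmetric $\kappa\bullet\pi$ with the same eigenvalue; in particular $\rho(\kappa\circ\pi)=\rho(\kappa\bullet\pi)$ and $\mu(\kappa\circ\pi)_j\propto \bar\mu(\kappa\bullet\pi)_j\sqrt{\pi_j}$. More generally, a short computation shows
\[
[v(\kappa\circ\pi)^R]_j = \bigl[(v_i/\sqrt{\pi_i})_i\,(\kappa\bullet\pi)^R\bigr]_j\,\sqrt{\pi_j},
\]
so an iterate on the non-symmetric side is just a pointwise rescaling of an iterate on the symmetric side. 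Since $v\in\Pi_1$ and $\sqrt{\pi_i}\in[\sqrt\eta,1]$, the probe vector $(v_i/\sqrt{\pi_i})_i$ has $\ell_1$-norm at most $\eta^{-1/2}$, i.e.\ lies in $\Pi_{\le\eta^{-1/2}}$; the preceding lemma's proof goes through verbatim on this slightly larger set of probes (the Cauchy--Schwarz bound on $|\langle v,\mu^{(i)}\rangle|$ is simply multiplied by a constant). Applying it yields uniform convergence
\[
\sup\Bigl\|\frac{(v_i/\sqrt{\pi_i})_i(\kappa\bullet\pi)^R}{\rho^R} - \bigl\langle (v_i/\sqrt{\pi_i})_i,\bar\mu(\kappa\bullet\pi)\bigr\rangle\bar\mu(\kappa\bullet\pi)\Bigr\|_1 \to 0,
\]
and pointwise rescaling by $\sqrt{\pi_j}\le 1$ preserves uniform convergence in $\ell_1$.

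The main obstacle is passing from $\rho^R$-rescaling to $\ell_1$-normalisation: this requires a uniform positive lower bound on the $\ell_1$-norm of the limit vector $\bigl\langle (v_i/\sqrt{\pi_i})_i,\bar\mu\bigr\rangle\,(\bar\mu_j\sqrt{\pi_j})_j$, where $\bar\mu=\bar\mu(\kappa\bullet\pi)$. Here I would combine three elementary estimates. The Collatz--Wielandt formula on $\kappa\bullet\pi\in S^{k\times k}([\eta^2,T])$ gives $\rho(\kappa\bullet\pi)\le kT$. Plugging this into the eigenvalue equation $\rho\bar\mu_j=\sum_i\bar\mu_i[\kappa\bullet\pi]_{i,j}$ yields $\bar\mu_j\ge \eta^2/(kT)$ uniformly (since $\|\bar\mu\|_2=1$ forces $\sum\bar\mu_i\ge 1$). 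Using $v_i/\sqrt{\pi_i}\ge v_i$ and $\sum v_i=1$ then gives $\langle (v_i/\sqrt{\pi_i})_i,\bar\mu\rangle\ge \eta^2/(kT)$, and $\sqrt{\pi_j}\ge\sqrt\eta$ gives $\|(\bar\mu_j\sqrt{\pi_j})_j\|_1\ge\sqrt\eta$. Multiplying, the limit has $\ell_1$-norm at least $\eta^{5/2}/(kT)>0$ uniformly.

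With that lower bound in hand, dividing the uniform $\ell_1$-convergence of the unnormalised iterate by its uniformly bounded-below $\ell_1$-norm gives uniform convergence of $v(\kappa\circ\pi)^R/\|v(\kappa\circ\pi)^R\|_1$ to $(\bar\mu_j(\kappa\bullet\pi)\sqrt{\pi_j})_j$ normalised in $\ell_1$. Finally the spectral correspondence from the first step identifies this normalised vector as $\mu(\kappa\circ\pi)$, completing the proof of \eqref{eq:unifperronproj}.
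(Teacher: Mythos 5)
Your proposal is correct and follows essentially the same route as the paper's proof: symmetrise via $\kappa\bullet\pi$, transport the uniform symmetric-matrix convergence of \eqref{eq:realsymlimit} through the probe vector $(v_i/\sqrt{\pi_i})_i\in\Pi_{\le\eta^{-1/2}}$, and obtain the same uniform lower bound $\eta^{5/2}/(kT)$ on the limit's $\ell_1$-norm to pass from $\rho^R$-rescaling to $\ell_1$-normalisation. The only cosmetic difference is that you bound $\bar\mu_j$ directly using $\|\bar\mu\|_1\ge\|\bar\mu\|_2=1$, whereas the paper bounds the $\ell_1$-normalised $\mu_j$ first and then notes $\bar\mu\ge\mu$; these are equivalent.
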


Recalling the definition \eqref{eq:defnBthetaA}
$$\mathbb{B}_\theta(A):= \{B\in\R_+^{k\times k} \, :\, |B_{i,j}-A_{i,j}|\le \theta,\,\forall i,j\in[k] \},$$
of the set of positive kernels whose entries differ from those of $A$ by at most $\theta$, we can now prove Lemma \ref{convevector}.

\begin{proof}[Proof of Lemma \ref{convevector}]
For now we fix $\theta\in(0,\eta^2)$, and will take this small enough at the end. Then, for any $A\in[\eta^2,T]^{k\times k}$ and $D^{(1)},\ldots,D^{(R)}\in \mathbb{B}_{\theta}(A)$,
$$(D^{(1)}D^{(2)}\ldots D^{(R)})_{i,j} = \sum_{i=i_0,i_1,\ldots,i_R=j} \prod_{r=1}^R D^{(r)}_{i_{r-1},i_r} \le \sum_{i=i_0,i_1,\ldots,i_R=j}\prod_{r=1}^R ( A_{i_{r-1},i_r} + \theta).$$
Therefore, defining $\bar D:=D^{(1)}\cdots D^{(R)}$, since $\theta<\eta^2<\eta<T$,
$$(\bar D- A^R)_{i,j}\le k^{R-1}(2^R-1) \cdot \theta T^{R-1}.$$

Similarly, for a lower bound
\begin{align*}
(A^R-\bar D)_{i,j}&\le  \sum_{i=i_0,i_1,\ldots,i_R=j} \prod_{r=1}^R A_{i_{r-1},i_r} - \sum_{i=i_0,i_1,\ldots,i_R=j} \prod_{r=1}^R (A_{i_{r-1},i_r}-\theta).\\
\intertext{The RHS is a polynomial in $\theta$ whose coefficients have alternating signs, and so we can bound using the associated polynomial with every coefficient positive:}
(A^R-\bar D)_{i,j} &\le  \sum_{i=i_0,i_1,\ldots,i_R=j}\prod_{r=1}^R ( A_{i_{r-1},i_r} + \theta) - \sum_{i=i_0,i_1,\ldots,i_R=j}\prod_{r=1}^R A_{i_{r-1},i_r}.
\end{align*}
That is,
$$\Big|(\bar D- A^R)_{i,j}\Big|\le k^{R-1}(2^R-1) \cdot \theta T^{R-1}.$$

Since the fraction in \eqref{eq:prodapproxmu} is unchanged under positive scalar multiplication of $v$, it suffices to show the result for $v\in\Pi_1$. For any $v\in\Pi_1$:
$$\big|\big|v \bar D - v A^R\big|\big|_1 \le k^R(2^R-1) \cdot \theta T^{R-1}.$$

For \eqref{eq:prodapproxmu} we need to control the distance between the normalised vectors instead. Observe first that for each $i$, $||v D^{(i)}||_1\in [k(\eta-\theta),k(T+\theta)]$, whenever $v\in \Pi_1$. Thus $||v\bar D||_1,||v A^R||_1\in[(k(\eta-\theta))^R,(k(T+\theta))^R]$. From the triangle inequality,
\begin{align}
\left|\left| \frac{v\bar D}{||v\bar D||_1} - \frac{v A^R}{||v A^R||_1} \right|\right|_1 &\le \left|\left| \frac{v\bar D-v A^R}{||v\bar D||_1} \right|\right|_1 + \left|\left| \frac{v  A^R}{||v \bar D||_1} - \frac{v  A^R}{||v  A^R||_1} \right|\right|_1\nonumber\\
&\le \frac{||v\bar D-v A^R||_1}{||v\bar D||_1} + ||v  A^R||_1 \left|\frac{1}{||v\bar D||_1}-\frac{1}{||v  A^R||_1} \right|\nonumber\\
&\le \frac{||v\bar D-v A^R||_1}{||v\bar D||_1} + ||v  A^R||_1  \frac{\left|||v\bar D||_1 -||v  A^R||_1\right|}{||v\bar D||_1||v  A^R||_1}  \nonumber\\
&\le  \frac{2||v\bar D-v A^R||_1}{||v\bar D||_1}, \label{eq:l1triangleineq}
\intertext{so for $v\in\Pi_1$,}
\left|\left| \frac{v\bar D}{||v\bar D||_1} - \frac{v A^R}{||v A^R||_1} \right|\right|_1&\le \frac{2(2^R-1)\cdot \theta T^{R-1}}{(\eta-\theta)^R} .\label{eq:Dbarineq}
\end{align}
Finally, we take $A=\kappa\circ\pi$. Lemma \ref{unifPerronproj} determines a value of $R$ such that for all $\kappa\in[\eta,T]^{k\times k}$, $\pi\in\Pi_{\le 1}\cap[\eta,1]^k$, and $v\in\Pi_1$, taking $A=\kappa\circ \pi$, we have
$$\left|\left| \frac{v A^R}{||v A^R||_1} - \mu(A) \right|\right|_1 \le \frac{\delta}{2}.$$
Combining with \eqref{eq:Dbarineq} and taking $\theta$ small enough, the result follows after extending from $v\in\Pi_1$ to $v\in\R^k_{\ge 0}\backslash\{0\}$.
\end{proof}

\subsubsection{Lipschitz property of the principal eigenvector}\label{Lipschitzproof}
We restate Lemma \ref{muLipprop2}, which is also a generalisation of Lemma \ref{muLipprop1}, and prove it by adapting a very similar result from \cite{MagnusNeudecker}.
\begin{lemma*}Let $\mathbb{A}$ be a compact subset of $\R_{\ge 0}^{k\times k}$ with the property that for any $A\in\mathbb{A}$, the Perron root of $A$ is simple. Then there exists a constant $C(\mathbb{A})<\infty$ such that, for all matrices $A,A'\in\mathbb{A}$,
$$||\mu(A)-\mu(A')||_1 \le C(\mathbb{A})\max_{i,j\in[k]} |A_{i,j}-A'_{i,j}|.$$

\begin{proof}[Proof of Lemma \ref{muLipprop2}]
We use a related result about the local smoothness of eigenvalues and eigenvectors as the matrix varies in the neighbourhood of a matrix with a simple eigenvalue.

\begin{theorem*}[\cite{MagnusNeudecker}, \textsection3.9, Theorem 8]Let $\rho_0$ be a simple eigenvalue of a matrix $A_0\in \mathbb{C}^{k\times k}$, and $\mu_0$ an associated left-eigenvector satisfying $\mu_0^\dagger \mu_0=1$. Then, there exists a neighbourhood of $N(A_0)\subset \mathbb{C}^{k\times k}$ of $A_0$, and functions $\rho:N(A_0)\rightarrow\mathbb{C}$ and $\bar\mu :N(A_0)\rightarrow \mathbb{C}^k$, such that
\begin{itemize}
\item $\rho(A_0)=\rho_0$ and $\bar\mu(A_0)=\mu_0$,
\item $\bar\mu(A)A=\rho(A)\bar\mu(A)$, and $\mu_0^\dagger\bar\mu(A)=1$ for all $A\in N(A_0)$,
\item $\rho$ and $\bar\mu$ are infinitely differentiable on $N(A_0)$.
\end{itemize}
\end{theorem*}

\medskip
If we take $A_0\in \mathbb{A}$, and $\mu_0=\mu(A)$, then it follows that $\bar \mu$ is locally Lipschitz as a function $N(A_0)\cap\mathbb{A}\to \R_+^k$. In this statement $\bar\mu(A)$ differs from our definition $\mu(A)$ by a normalising factor, that varies in $N(A_0)$. However, the choice $\bar\mu$ satisfies $\bar \mu(A_0)^T \bar \mu(A_0)=1$, and so for each $i\in[k]$, $\bar \mu_i(A_0)\le 1$. Therefore, for any $A\in N(A_0)\cap \mathbb{A}$,
\begin{equation}\label{eq:barmuge1}||\bar \mu(A)||_1 \ge \bar \mu(A_0)^T\bar \mu(A)=1.\end{equation}

Now, for $A,A'\in N(A_0)\cap \mathbb{A}$,
\begin{align*}
||\mu(A)-\mu(A')||_1 &= \left|\left| \frac{\bar\mu(A)}{||\bar\mu(A)||_1} - \frac{\bar \mu(A')}{||\bar \mu(A')||_1}\right|\right|.\\
\intertext{Therefore, as in \eqref{eq:l1triangleineq},}
||\mu(A)-\mu(A')||_1&\le \frac{2||\bar\mu(A)-\bar\mu(A')||_1}{||\bar\mu(A)||_1}\stackrel{\eqref{eq:barmuge1}}\le 2||\bar\mu(A)-\bar\mu(A')||_1.
\end{align*}

Since $\bar \mu$ is locally Lipschitz on $N(A_0)\cap \mathbb{A}$, it follows that $\mu$ is also locally Lipschitz on $N(A_0)\cap \mathbb{A}$. Thus $\mu$ is Lipschitz on $\mathbb{A}$ by compactness.
\end{proof}
\end{lemma*}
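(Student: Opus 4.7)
My plan is to establish local Lipschitz continuity of $\mu$ at each point of $\mathbb{A}$ via perturbation theory for simple eigenvalues, and then globalize using compactness.

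For the local step, fix $A_0 \in \mathbb{A}$ with Perron root $\rho_0$ and principal left-eigenvector $\mu_0 = \mu(A_0)$ (so $\mu_0 > 0$ componentwise and $||\mu_0||_1 = 1$). I would apply the implicit function theorem to the smooth map
$$F(v, \lambda; A) := \bigl(v(A - \lambda I),\; \langle v, \mathbf{1}\rangle - 1\bigr) \in \R^k \times \R,$$
whose zero set in $(v,\lambda)$ is exactly the $\ell^1$-normalized left-eigenvector/eigenvalue pair. Simplicity of $\rho_0$ implies the Jacobian of $F$ with respect to $(v,\lambda)$ at $(\mu_0, \rho_0; A_0)$ is non-singular: the left null-space of $A_0 - \rho_0 I$ is one-dimensional and spanned by $\mu_0$, and $\langle \mu_0, \mathbf{1}\rangle = 1 \neq 0$ handles that slice; meanwhile $\mu_0$ does not lie in the range of $v \mapsto v(A_0 - \rho_0 I)$ because the corresponding right Perron eigenvector $r_0$ satisfies $\langle \mu_0, r_0\rangle \neq 0$ when $\rho_0$ is simple. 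This produces a smooth—hence locally Lipschitz—parametrization $A \mapsto (\mu(A), \rho(A))$ on an open neighborhood $N(A_0)$ of $A_0$; after possibly shrinking $N(A_0)$, the vector $\mu(A)$ stays in the positive orthant, so it really coincides with our principal eigenvector.

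For the global step, $\{N(A_0) : A_0 \in \mathbb{A}\}$ is an open cover of the compact set $\mathbb{A}$, and the Lebesgue number lemma (applied to a chosen finite subcover) supplies $\delta > 0$ such that any pair $A, A' \in \mathbb{A}$ with $\max_{i,j}|A_{i,j} - A'_{i,j}| < \delta$ lies in a common $N(A_0)$ from the subcover, where the local Lipschitz bound applies; I would take $C_1$ as the maximum of the finitely many local Lipschitz constants. For pairs with $\max_{i,j}|A_{i,j} - A'_{i,j}| \ge \delta$, the trivial estimate $||\mu(A) - \mu(A')||_1 \le 2$ (since both are in $\Pi_1$) gives a Lipschitz-type bound with constant $2/\delta$, and setting $C(\mathbb{A}) := \max(C_1, 2/\delta)$ completes the proof.

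The main obstacle is the local step: extracting a clean Lipschitz statement from the theory of perturbations of simple eigenvalues. Rather than verify the Jacobian calculation directly, one can cite a classical smoothness result (Kato, or the Magnus--Neudecker theorem already used elsewhere in the paper), which furnishes an analytic parametrization of eigenvalue and eigenvector near any matrix at which the eigenvalue of interest is simple. A small bookkeeping point is that such a theorem typically produces an eigenvector normalized by a non-$\ell^1$ condition (e.g.\ $\mu_0^{\dagger}\mu = 1$); but since $\mu_0 > 0$ and $\mu(\cdot)$ is continuous near $A_0$, both $||\cdot||_1$ and $\mu_0^{\dagger}\cdot$ remain bounded away from zero on a neighborhood, so the two normalizations differ by a smooth non-vanishing rescaling and the Lipschitz property transfers to the $\ell^1$-normalized $\mu$.
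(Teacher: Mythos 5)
Your proposal is correct and reaches the same conclusion, but your primary route differs from the paper's in a useful way. The paper invokes the Magnus--Neudecker smoothness theorem, which produces an eigenvector $\bar\mu$ normalized by $\mu_0^\dagger\bar\mu(A)=1$, and then spends several lines showing $||\bar\mu(A)||_1\ge 1$ so that the rescaling to $\mu = \bar\mu/||\bar\mu||_1$ preserves the Lipschitz bound. You instead apply the implicit function theorem directly to $F(v,\lambda;A)=(v(A-\lambda I),\,\langle v,\mathbf 1\rangle - 1)$, which builds the $\ell^1$ normalization into the parametrization from the start; your verification of the Jacobian's invertibility (via $\langle \mu_0, r_0\rangle\ne 0$ for the right Perron eigenvector $r_0$ when $\rho_0$ is simple, and $\langle \mu_0,\mathbf 1\rangle=1\ne 0$) is sound. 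This buys you freedom from the paper's renormalization bookkeeping, at the cost of having to check the Jacobian by hand rather than quoting a black-box theorem. You also fill in the globalization step more carefully than the paper does: the paper says only ``Lipschitz on $\mathbb{A}$ by compactness,'' whereas you correctly observe that one needs the Lebesgue number lemma for a finite subcover together with the trivial diameter bound $||\mu(A)-\mu(A')||_1\le 2$ (valid since $\mu(\cdot)\in\Pi_1$) for pairs at distance $\ge\delta$ — this is the standard and correct way to upgrade local Lipschitzness to global Lipschitzness on a compact but not necessarily convex set. Both routes are valid; yours is more self-contained, the paper's is shorter because it defers to a citation.
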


\subsection{Proof of Proposition \ref{LDPprop}}\label{LDPproof}

We restate Proposition \ref{LDPprop}, concerning uniform exponential tail bounds for the size of the largest component in a near critical IRG.

\begin{prop*}
Fix $\eta,\epsilon\in(0,1/2)$. Then there exist $N_0=N_0(\epsilon,\eta)\in\N$ and constants $M=M(\eta)<\infty$ and $\Gamma=\Gamma(\epsilon,\eta)>0$, such that for any $N\ge N_0$ and
\begin{itemize}
\item any kernel $\kappa\in[\eta,\infty)^{k\times k}$;
\item any vector $p\in\N^k$ such that $\sum p_i=N$ and $p_i/N\ge \eta$ for each $i$;
\item and such that the eigenvalue condition $\rho(\kappa\circ p/N)\le 1+\epsilon$ is satisfied;
\end{itemize}
the following bound on the largest component in $G^N(\rho,\kappa)$ holds:
\begin{equation}
\Prob{L_1\left(G^N(p,\kappa) \right) \ge M\epsilon N}\le \exp(-\Gamma N).
\end{equation}

\begin{proof}
The proof has three stages. First we use the majorisation result of Lemma \ref{majorisation} to reduce the problem (for given $\epsilon>0$) to a finite collection of $(\pi,\kappa)$s. Then, we argue that the survival probabilities $\zeta^{\pi,\kappa}$, as introduced in Section \ref{expbdsetupsection}, are uniformly bounded by $M\epsilon$. Recall that these survival probabilities are related to the \emph{typical} size of the giant component of $G^N(N\pi,\kappa)$ by Proposition \ref{BJRL1enhanced}. Finally, we use estimates of \cite{BJR10} for deviations above its typical size of the largest component of IRGs with a fixed kernel.

The second of these stages is most cumbersome, although the estimates we prove here are far from optimal. This will be treated first, after a straightforward preliminary lemma.

\begin{lemma}\label{alphaelemma}
For every $\epsilon>0$, define $\alpha_\epsilon$ such that
\begin{equation}\label{eq:defnalphae}\alpha_\epsilon = 1-e^{-(1+\epsilon)\alpha_\epsilon}.
\end{equation}
Then a) $\alpha_\epsilon\le 2\epsilon$; b) for all $x\ge \alpha_\epsilon$ we have $x\ge 1-e^{-(1+\epsilon)x}$.
\begin{proof}
Rearranging \eqref{eq:defnalphae} yields $-(1+\epsilon)\alpha_\epsilon = \log(1-\alpha_\epsilon)\le -\alpha_\epsilon-\frac{\alpha_\epsilon^2}{2}$, from which $\alpha_\epsilon\le 2\epsilon$ follows. Set $f(x)=1-e^{-x}$. Then b) is a consequence of $f'\le 1$ on $[0,\infty)$.
\end{proof}
\end{lemma}

\begin{lemma}\label{O1boundzeta}
Fix $\eta,\epsilon\in(0,1)$. Then there exists $M=M(\eta)<\infty$ such that for all subdistributions $\pi\in \Pi_{\le 1}\cap [\eta,1]^k$ and kernels $\kappa\in[\eta,\infty)^{k\times k}$ satisfying $\rho(\pi\circ\kappa)\le 1+\epsilon$,
\begin{equation}\label{eq:zetaisOe}
\pi\cdot\zeta^{\pi,\kappa}\le M\epsilon.\end{equation}
\begin{proof}
Motivated by \eqref{eq:fixedpointzeta}, define the function $F_{\pi,\kappa}:[0,\infty)^k\rightarrow[0,\infty)^k$ by
\begin{equation}\label{eq:defnFpikappa}
[F_{\pi,\kappa}(\mathbf{x})]_i = 1-\exp\left( - [(\kappa\circ \pi)\mathbf{x}]_i\right).
\end{equation}
Then, as shown in \cite{BJRinhomog}, $\zeta^{\pi,\kappa}$ is the maximal fixed point of $F_{\pi,\kappa}$. We also introduce the \emph{right-eigenvector} $\nu=\nu(\kappa\circ\pi)$ of $\kappa\circ\pi$, normalised such that $\nu\cdot\pi=1$. Since $\kappa$ is symmetric, we have $\nu_i=\mu_i/\pi_i$. (Note $\pi$ is positive.) Furthermore, since both eigenvectors are continuous functions of $(\pi,\kappa)$, there exists a constant $c=c(\epsilon,\eta)>0$ such that $\mu_i,\nu_i\ge c$ whenever $(\pi,\kappa)$ are in the range specified.

Define the vector $\theta= \alpha_{\rho(\kappa\circ\pi)-1}\nu/\nu_{\min} $, which, using a) of Lemma \ref{alphaelemma}, satisfies
\begin{equation}\label{eq:thetabds}
\pi\cdot \theta = \frac{\alpha_{\rho(\kappa\circ\pi)-1}}{\nu_{\min}}||\mu||_1\le \frac{\alpha_{\rho(\kappa\circ\pi)-1}}{c}\le \frac{2\epsilon}{c},\qquad \theta_i\ge \alpha_{\rho(\kappa\circ\pi)-1},\; \forall i\in[k].
\end{equation}
Now, set $\zeta^m:= F^{(m)}_{\pi,\kappa}(\theta)$, for $m\ge 0$. Using b) of Lemma \ref{alphaelemma} and the fact that $F_{\pi,\kappa}$ is weakly-decreasing, we have $\theta=\zeta^0\ge \zeta^1\ge \zeta^2\ldots>0$. Thus $\bar\zeta:= \lim_{m\rightarrow\infty} \zeta^m$ exists, and is non-negative, and satisfies $F_{\pi,\kappa}(\bar\zeta)=\bar\zeta$. Since $\kappa>0$, it satisfies the irreducibility condition required for Lemma 5.9 of \cite{BJRinhomog}, which establishes that the \emph{only} fixed points of $F_{\pi,\kappa}$ are $0$ and $\zeta^{\pi,\kappa}$.

Suppose that $\rho>1$ and $\bar\zeta=\lim_{m\rightarrow\infty}\zeta^m=0$. Then, as $m\rightarrow\infty$,
\begin{equation}\label{eq:zetam+1}\zeta^{m+1}=F_{\pi,\kappa}(\zeta^m) = \left( 1-e^{-[(\kappa\circ\pi)\zeta^m]_i},\,i\in[k]\right) = (\kappa\circ \pi)\zeta^m + O\left(||\zeta^m||^2\right).\end{equation}

In particular, for large enough $m$, we have
$$\zeta^{m+1} \ge (\kappa\circ \pi)\zeta^m - \tfrac{\rho-1}{2}\zeta^m,$$
and so, taking a product with $\mu$ (whose entries, recall, are positive),
$$\mu\cdot \zeta^{m+1} \ge \mu(\kappa\circ \pi)\zeta^m - \tfrac{\rho-1}{2}\zeta^m = \left( \rho - \tfrac{\rho-1}{2}\right)\mu\cdot\zeta^m > \mu\cdot\zeta^m.$$
But clearly $(\mu\cdot \zeta^m)$ is also a decreasing sequence, so this is a contradiction when $\rho>1$. So in all cases, $\bar\zeta=\zeta^{\pi,\kappa}$. But $\bar\zeta\le \theta$, and so from \eqref{eq:thetabds}, $\pi\cdot\zeta^{\pi,\kappa}\le \frac{2\epsilon}{c}$, as required.
\end{proof}
\end{lemma} 

We can now continue with the proof of Proposition \ref{LDPprop}. We first use Lemma \ref{majorisation} to exhibit a collection of subdistribution-kernel pairs $(\pi^{(1)},\kappa^{(1)}),\ldots,(\pi^{(L)},\kappa^{(L)})$ with $\rho(\kappa^{(\ell)}\circ \pi^{(\ell)})=1+2\epsilon$, at least one of which dominates any pair $(\pi,\kappa)$ satisfying the conditions of Proposition \ref{LDPprop}, particularly $\rho(\kappa\circ\pi)\le 1+\epsilon$.

As a result, for any suitable sequence $p^N$, for large enough $N$, we have
\begin{equation}\label{eq:stochdomforLDP}L_1\left(G^N(p^N,\kappa)\right)\le_{\mathrm{st}} L_1\left(G^N\left(\fl{N\pi^{(\ell)}},\kappa^{(\ell)}\right)\right),\end{equation}
for some $\ell\in[L]$. However, for each $\ell\in[L]$, Theorem 1.4 of \cite{BJR10} asserts the existence of $\Gamma^{(\ell)}$ such that
\begin{equation}\label{eq:BJRLDP}
\Prob{\left|L_1\left(G^N\left(\fl{N\pi^{(\ell)}},\kappa^{(\ell)}\right)\right) - \pi\cdot \zeta^{\pi,\kappa}N\right| \ge \epsilon N}\le e^{-\Gamma^{(\ell)}N},
\end{equation}
when $N$ is large enough. Taking $M=M(\eta)$ as in Lemma \ref{O1boundzeta}, using \eqref{eq:zetaisOe} gives
$$\Prob{L_1\left(G^N\left(\fl{N\pi^{(\ell)}},\kappa^{(\ell)}\right)\right)\ge (2M+1)\epsilon N}\le e^{-\Gamma^{(\ell)}N}.$$
Using the stochastic domination \eqref{eq:stochdomforLDP} and setting $\Gamma:= \min_{\ell\in[L]}\Gamma^{(\ell)}$, the statement of Proposition \ref{LDPprop} follows after replacing $M$ with $2M+1$.
\end{proof}
\end{prop*}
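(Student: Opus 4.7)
The plan is to prove this uniform exponential bound in three stages: (i) reduce the uniform statement over infinitely many pairs $(\pi,\kappa)$ to a statement over finitely many, (ii) show that the multitype branching process survival probability $\zeta^{\pi,\kappa}$ from Section \ref{expbdsetupsection} is $O(\epsilon)$ near criticality, and (iii) invoke a concentration result for $L_1$ in a fixed IRG around its typical value $N\pi\cdot\zeta^{\pi,\kappa}$.

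For step (i), I would apply Lemma \ref{majorisation} with $\Lambda = 1+\epsilon$ and $\bar\Lambda = 1+2\epsilon$ (and the fixed $\eta$) to obtain a finite collection $(\pi^{(\ell)},\kappa^{(\ell)})_{\ell\in[L]}$ satisfying $\rho(\kappa^{(\ell)}\circ \pi^{(\ell)})=1+2\epsilon$, such that any admissible $(\pi,\kappa)$ is dominated entrywise by some $(\pi^{(\ell)},\kappa^{(\ell)})$. Monotonicity of IRG construction then gives, for $N$ large enough,
$$L_1(G^N(p,\kappa)) \;\le_{\mathrm{st}}\; L_1\bigl(G^N(\fl{N\pi^{(\ell)}},\kappa^{(\ell)})\bigr).$$

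For step (ii), I would work with the branching operator $F_{\pi,\kappa}(x)_i = 1-\exp(-[(\kappa\circ\pi)x]_i)$, whose maximal fixed point is $\zeta^{\pi,\kappa}$. Let $\nu$ denote the right-Perron eigenvector of $\kappa\circ\pi$, normalised so $\pi\cdot \nu=1$; by symmetry of $\kappa$ and positivity of each $\pi_i$, $\nu_i$ is comparable to $\mu_i/\pi_i$ and, on the compact set $(\pi,\kappa)$ under consideration, is bounded below by a constant $c(\eta)>0$. I would introduce $\theta := \alpha_{\rho-1}\nu/\nu_{\min}$, where $\alpha_\delta$ is the solution of $\alpha=1-e^{-(1+\delta)\alpha}$; an elementary Taylor argument gives $\alpha_\delta\le 2\delta$. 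One checks pointwise that $F_{\pi,\kappa}(\theta)\le \theta$ (using that $\theta_i\ge \alpha_{\rho-1}$ and that $x\ge 1-e^{-(1+\epsilon)x}$ for $x\ge \alpha_\epsilon$), so the iterated sequence $F^{(m)}_{\pi,\kappa}(\theta)$ decreases to some fixed point $\bar\zeta$. Using irreducibility (the kernel is positive) together with a linearisation argument at $0$ comparing $F_{\pi,\kappa}$ to $\kappa\circ \pi$ in the Perron direction shows that the only fixed points are $0$ and $\zeta^{\pi,\kappa}$; in the supercritical regime, $0$ is unstable, so $\bar\zeta=\zeta^{\pi,\kappa}$. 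Hence $\pi\cdot\zeta^{\pi,\kappa}\le \pi\cdot\theta \le M(\eta)\epsilon$.

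For step (iii), I would quote Theorem 1.4 of \cite{BJR10}, which for each fixed $(\pi^{(\ell)},\kappa^{(\ell)})$ yields $\Gamma^{(\ell)}>0$ such that $\mathbb{P}(|L_1(G^N(\fl{N\pi^{(\ell)}},\kappa^{(\ell)})) - N\pi^{(\ell)}\cdot \zeta^{(\ell)}|\ge \epsilon N) \le e^{-\Gamma^{(\ell)}N}$ for $N$ large. Combining with step (ii) gives the largest component in each majorising IRG is at most $(2M+1)\epsilon N$ with probability $\ge 1-e^{-\Gamma^{(\ell)}N}$; taking $\Gamma := \min_\ell \Gamma^{(\ell)}$ and absorbing the $+1$ into a redefinition of $M(\eta)$ finishes the proof. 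The main obstacle is step (ii): the $O(\epsilon)$ scaling is the delicate part since it requires pinning down that near-critical supercritical survival is genuinely linear in the supercriticality (rather than square-root or worse), and requires a careful selection of the dominating vector $\theta$ aligned with the Perron direction together with the uniform lower bound $c(\eta)$ on the eigenvector entries.
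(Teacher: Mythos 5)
Your proposal follows essentially the same route as the paper: Lemma~\ref{majorisation} with $\bar\Lambda = 1+2\epsilon$ to reduce to finitely many majorising pairs; the fixed-point comparison with the Perron-aligned vector $\theta = \alpha_{\rho-1}\nu/\nu_{\min}$ to establish $\pi\cdot\zeta^{\pi,\kappa} = O(\epsilon)$ (including the same linearisation argument to exclude $\bar\zeta=0$ in the supercritical case, and the appeal to irreducibility for uniqueness of nonzero fixed points); and Theorem 1.4 of \cite{BJR10} for the concentration of $L_1$ about $N\pi\cdot\zeta$. This matches the paper's argument in all essentials.
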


\bibliographystyle{abbrv}

\end{document}